\newtheorem{thm}{Theorem}[section]
\newtheorem{cor}[thm]{Corollary}
\newtheorem{lem}[thm]{Lemma}
\newtheorem{prop}[thm]{Proposition}
\theoremstyle{definition}
\newtheorem{defn}[thm]{Definition}
\newtheorem{prob}[thm]{Problem}
\theoremstyle{remark}
\newtheorem{rem}[thm]{Remark}
\newtheorem{opprob}[thm]{Open problem}
\numberwithin{equation}{section}
\numberwithin{figure}{section}
\newcommand{\diff}{\mathrm{d}}
\newcommand{\C}{{\mathbb C}}
\newcommand{\R}{{\mathbb R}}
\newcommand{\D}{{\mathbb D}}
\newcommand{\Te}{{\mathbb T}}
\newcommand{\Eop}{{\mathbf{E}}}
\newcommand{\imag}{\mathrm{i}}
\newcommand{\Mop}{{\mathbf M}}
\newcommand{\Lop}{{\mathbf L}}
\newcommand{\Iop}{\mathbf{I}}
\begin{document}

%
\title{Weighted integrability of polyharmonic functions}

\author{Alexander Borichev}

\address{Borichev: Laboratoire d'analyse, topologie, probabilit\'es\\
CMI, Aix-Marseille Universit\'e\\
39, rue Fr\'ed\'eric Joliot Curie\\
F--13453 Marseille CEDEX 13\\
FRANCE}

\email{borichev@cmi.univ-mrs.fr}

\author{Haakan Hedenmalm}
\address{Hedenmalm: Department of Mathematics\\
KTH Royal Institute of Technology\\
S--10044 Stockholm\\
Sweden}

\email{haakanh@math.kth.se}

\subjclass[2000]{Primary 31A30; Secondary 35J40}
\keywords{Polyharmonic functions, weighted integrability, boundary behavior, 
cellular decomposition}
 
\thanks{The research of the first author was
partially supported by the ANR grant FRAB. The second author was 
supported by the G\"oran Gustafsson Foundation (KVA) and Vetenskapsr\aa{}det 
(VR)}

\begin{abstract} 
To address the uniqueness issues associated with the Dirichlet problem for 
the $N$-harmonic equation on the unit disk $\D$ in the plane, we investigate 
the $L^p$ integrability of $N$-harmonic functions with respect to the 
standard weights $(1-|z|^2)^{\alpha}$. The question
at hand is the following. If $u$ solves $\Delta^N u=0$ in $\D$, where 
$\Delta$ stands for the Laplacian, and 
\[
\int_\D|u(z)|^p (1-|z|^2)^{\alpha}\diff A(z)<+\infty,
\]
must then $u(z)\equiv0$? Here, $N$ is a positive integer, $\alpha$ is real, 
and $0<p<+\infty$; $\diff A$ is the usual area element. The answer will, 
generally speaking, depend on the triple $(N,p,\alpha)$. The most 
interesting case is $0<p<1$. For a given $N$, we find an explicit critical 
curve $p\mapsto\beta(N,p)$ -- a piecewise affine function -- 
such that for $\alpha>\beta(N,p)$ there exist non-trivial functions $u$ with 
$\Delta^N u=0$ of the given integrability, while for $\alpha\le\beta(N,p)$, 
only $u(z)\equiv0$ is possible. 
We also investigate the obstruction to uniqueness for the Dirichlet 
problem, that is, we study the structure of the functions in 
$\mathrm{PH}^p_{N,\alpha}(\D)$ when this space is nontrivial. We find a 
new structural decomposition of the polyharmonic functions -- the
cellular decomposition -- which decomposes the polyharmonic weighted $L^p$
space in a canonical fashion. Corresponding to the cellular expansion
is a tiling of part of the $(p,\alpha)$ plane into cells. 

The above uniqueness for the Dirichlet problem may be considered for any 
elliptic operator of order $2N$. However, the above-mentioned 
critical integrability curve will depend rather strongly on the given elliptic 
operator, even in the constant coefficient case, for $N>1$. 
\end{abstract}

\maketitle

\centerline{\em In memory of Boris Korenblum}

\section{Introduction} 

\subsection{Basic notation} Let 
\[
\Delta:=\frac{\partial^2}{\partial x^2}+\frac{\partial^2}{\partial y^2},
\qquad\diff A(z):=\diff x\diff y,
\]
denote the Laplacian and the area element, respectively. Here, $z=x+\imag y$
is the standard decomposition into real and imaginary parts. We let $\C$ denote
the complex plane, while $\D:=\{z\in\C:|z|<1\}$ and $\Te:=
\{z\in\C:|z|=1\}$ denote the unit disk and the unit circle, respectively. 

\subsection{Polyharmonic functions} 
Given a positive integer $N$, a function 
$u:\D\to\C$ is said to be {\em $N$-harmonic} (alternative terminology: 
{\em polyharmonic of degree $N-1$}) if 
\[
\Delta^{N}u=0
\]
holds on $\D$ in the sense of distribution theory. If $u$ is $N$-harmonic for
some $N$, we say that it is {\em polyharmonic}. Clearly, $1$-harmonic 
functions are just ordinary harmonic functions. The $2$-harmonic functions
are of particular interest, and they are usually said to be {\em biharmonic}. 
While the Laplacian $\Delta$ is associated with a {\em membrane}, 
the bilaplacian $\Delta^2$ is associated with a {\em plate}. There is a 
sizeable literature related to the bilaplacian, and more generally the
$N$-laplacian $\Delta^N$; see, e.g., the books \cite{ACL}, 
\cite{Gak}, \cite{Gara}, \cite{GGS}, and the papers \cite{Gara1}, 
\cite{Hed1}, \cite{Hed2}, \cite{HJS}, \cite{AH}, \cite{GR}, \cite{MM}, 
\cite{Olof1}. 
We could also mention that the biharmonic Green function was used in 
\cite{DKSS} to generalize the factorization of $L^2$ Bergman space functions 
found in \cite{Hed0} to the $L^p$ setting (see also \cite{Hed0.5}, \cite{ARS}).
Later, it was applied to Hele-Shaw flow on surfaces \cite{HedShi02}, 
\cite{HedPer}, \cite{HedOlo}.

\subsection{Uniqueness for the Dirichlet problem}
The Dirichlet problem associated with the $N$-Laplacian is
\[
\begin{cases}
\Delta^{N}u=0\quad \text{on}\,\,\,\D,
\\
\partial_{\mathrm{n}}^j u=f_j\quad\text{on}\,\,\,\Te\, \text{ for}\,\,\,
j=0,1,\ldots,N-1,
\end{cases}
\label{eq-Dirprob}
\]
where $\partial_{\mathrm{n}}$ stands for the (interior) normal derivative.
A natural way to interpret this problem is to first construct a function $F$
on $\D$ which encodes the boundary information from the data $f_0,\ldots,
f_{N-1}$, and to say that \eqref{eq-Dirprob} asks of $u$ that $\Delta^Nu=0$
and that $u-F$ belongs to a class of functions that decay rapidly to $0$ 
near the boundary $\Te$. Often, this decay is understood in terms of Sobolev
spaces (see any book on partial differential equations; for a slightly
different approach, see, e.g., \cite{Dahlb}). A perhaps simpler requirement 
is to ask that  
\begin{equation}
|u(z)-F(z)|=\mathrm{o}((1-|z|)^{N-1})\quad \text{as}\,\,\,|z|\to1^-,
\label{eq-brydecay1}
\end{equation}
and it is easy to see that $u$ is uniquely determined by the differential
equation $\Delta^N u=0$ combined with \eqref{eq-brydecay1}. If we focus on
uniqueness, then by by forming differences we may as well assume $F(z)\equiv0$.
We may then think of \eqref{eq-brydecay1} as
\begin{equation}
(1-|z|)^{-N+1}|u(z)|\in L^\infty_0(\D),
\label{eq-brydecay2}
\end{equation} 
where $L^\infty_0(\D)$ stands for the closed subspace of $L^\infty(\D)$ consisting
of functions with limit $0$ along $\Te$. So \eqref{eq-brydecay2} forces an
$N$-harmonic function to vanish identically. What would happen if we
replace $L^\infty_0(\D)$ by, for instance $L^p(\D)$, for some $p$, $0<p<+\infty$?
We would expect that we ought to change the exponent in the distance to 
the boundary, but by how much? More precisely, we would like to know for which
real (negative) $\alpha$ the implication
\begin{equation}
(1-|z|)^{\alpha}|u(z)|\in L^p(\D)\,\, \implies\,\, u\equiv 0
\label{eq-brydecay3}
\end{equation} 
holds for all $N$-harmonic functions $u$. Here, we shall obtain the 
complete answer to this question.  We obtain an explicit expression 
$\beta(N,p)$ such that the implication \eqref{eq-brydecay3} holds if and only
if $\alpha\le\beta(N,p)/p$. When $\alpha>\beta(N,p)/p$, when we have 
non-uniqueness, we study the source of obstruction to uniqueness (see Theorems
\ref{thm-main2} and \ref{thm-main1}). We obtain an
understanding of those obstructions in terms of a decomposition of $N$-harmonic
functions -- which we call the {\em cellular decomposition} --
associated with a nontrivial factorization of the $N$-Laplacian. This cellular
decomposition is of course related to the classical Almansi representation, 
but the terms are mixed up in a complicated way, optimal for the 
analysis of the boundary behavior.    

\subsection{The local uniqueness problem}
\label{subsec-local1}
Let $J\subset\Te$ be a proper closed arc with positive length, and let 
\begin{equation}
Q(J):=\{z\in\D\setminus\{0\}:\,z/|z|\in J\}
\label{eq-QJ}
\end{equation}
be the corresponding angular triangle. For a subset $E$ of $\C$, we write 
$1_E$ for the characteristic function of $E$, which equals $1$ on $E$ and 
vanishes elsewhere.
The local version of the problem \eqref{eq-brydecay3} reads as follows:
for which $\alpha$ does the implication 
\begin{equation}
1_{Q(J)}(z)(1-|z|)^{\alpha}|u(z)|
\in L^p(Q(J))\,\, \implies\,\, u\equiv 0  
\label{eq-brydecay4.1}
\end{equation}
hold for all $N$-harmonic functions on $\D$? The study of 
\eqref{eq-brydecay4.1} can be thought of as a weighted integrability version 
of Holmgren's uniqueness problem (see, e.g., John's book \cite{John}).
We completely resolve this question as well: the implication in 
\eqref{eq-brydecay4.1} holds if and only if $\alpha\le -2N+1
-\frac{1}{p}$ (see Theorem \ref{thm-8.1.1}). 
Note that the answer does not depend on the length of the
arc $J$. In addition it is worth observing that $\beta(N,p)/p=-2N+1
-\frac{1}{p}$ for $p$ in the interval $0<p\le 1/(2N)$, so for such small $p$
the global (Dirichlet-type) and local (Holmgren-type) problems have the same
uniqueness criteria. 
For other domains such as the interior of an ellipse the critical range of 
$\alpha$ will be different. 
See Subsections \ref{subsec-otherdom} and \ref{subsec-8.extra} for details.  

\subsection{Remarks on the setting}
It is natural to ask whether our results generalize to other domains and to
more general elliptic partial differential operators. After all, our methods
are quite specific to the $N$-Laplacian on circular disks 
(half-planes would work as well). It is possible to show that already 
replacing the circle by an ellipse while keeping the bilaplacian $\Delta^2$ 
(for $N=2$) changes the problem considered here, so that it obtains a 
different answer; see Subsections \ref{subsec-otherdom} and 
\ref{subsec-8.extra} for details (cf. \cite{Hed3}). We remark here the 
appearance of a connection with the theory of quadrature domains.  In 
conclusion, we use specific methods because the problem needs them. 

\subsection{Acknowledgements}
We are grateful to Miroslav Pavlovi\'c for pointing out to us the 
results contained in his papers \cite{Pa2}, \cite{Pa1}.
We also thank Elias Stein for making us aware of the theory of axially
symmetric potentials, and Alexandru Ionescu for suggesting the local 
uniqueness problem. Finally, we thank Dmitry Khavinson for valuable comments.

\section{The Almansi expansion and weighted Lebesgue spaces 
in the disk} 

\subsection{Some additional notation}
\label{subsec-addlnot}

For $z_0\in\C$ and positive real $r$, let $\D(z_0,r)$ denote the open 
disk centered at $z_0$ with radius $r$; moreover, we let $\Te(z_0,r)$ denote
the boundary of $\D(z_0,r)$ (which is a circle). We let $\diff s(z)=|\diff z|$
denote arc length measure on curves (usually on circles such as 
$\Te=\Te(0,1)$).

The complex differentiation operators 
\[
\partial_z:=\frac{1}{2}\bigg(\frac{\partial}{\partial x}-\imag 
\frac{\partial}{\partial y}\bigg)
,\qquad \bar\partial_z:=\frac{1}{2}\bigg(\frac{\partial}{\partial x}+\imag 
\frac{\partial}{\partial y}\bigg),
\]
will be useful. Here, $z=x+\imag y$ is the standard decomposition of a complex
number into real and imaginary parts. It is easy to check that 
$\Delta=4\partial_z\bar\partial_z$. If we let 
\[
\nabla=\nabla_z:=\bigg(\frac{\partial}{\partial x},
\frac{\partial}{\partial y}\bigg)
\]
denote the gradient, then we find that
\begin{equation}
|\nabla_z u|^2=2\big(|\partial_z u|^2+|\bar\partial_z u|^2\big).
\label{eq-nabla2}
\end{equation}

\subsection{The Almansi expansion and the extension of a 
polyharmonic function}
\label{subsec-Almansi1}
The classical {\em Almansi expansion} (or {\em Almansi representation}) 
asserts that $u$ is $N$-harmonic if and only if it is of the form
\begin{equation}
u(z)=u_0(z)+|z|^2u_1(z)+\cdots+|z|^{2N-2}u_{N-1}(z),
\label{eq-Almansi1}
\end{equation}
where all the functions $u_j$ are harmonic in $\D$ (see, e.g., Section 32 of 
\cite{Gak}). The harmonic functions 
$u_j$ which appear in the Almansi expansion \eqref{eq-Almansi1} are uniquely
determined by the given $N$-harmonic function $u$, as is easy to see from
the Taylor expansion of $u$ at the origin, by appropriate grouping of the 
terms. This allows us to define uniquely the {\em extension operator} 
$\mathbf{E}$:
\begin{equation}
\mathbf{E}[u](z,\varrho)=u_0(z)+\varrho^2u_1(z)+\cdots+
\varrho^{2N-2}u_{N-1}(z);
\label{eq-Almansiext}
\end{equation}
the function $\mathbb{E}[u]$ will be referred to as {\em the extension} of $u$.
The extension $\mathbb{E}[u](z,\varrho)$ has the following properties: 

(a) it is an even polynomial of degree $2N-2$ in the variable $\varrho$, 

(b) it is harmonic in the variable $z$, and
\begin{equation}
\mathbf{E}[u](z,|z|)\equiv u(z).
\label{eq-restr1}
\end{equation} 
In fact, the above properties (a)--(b) and \eqref{eq-restr1} 
characterize the extension operator $\mathbf{E}$. 

As for notation, we write 
$\mathrm{PH}_N(\D)$ for the linear space of all $N$-harmonic function in the 
unit disk $\D$.

\subsection{The standard weighted Lebesgue spaces}
Let $u:\D\to\C$ be a (Borel) measurable function. Given reals $p,\alpha$ with
$0<p<+\infty$, we consider the Lebesgue space $L^p_\alpha(\D)$ of
(equivalence classes of) functions $u$ with
\begin{equation}
\|u\|^p_{p,\alpha}:=\int_\D |u(z)|^p(1-|z|^2)^{\alpha}
\,\diff A(z)<+\infty.
\label{eq-norm1}
\end{equation}
These spaces are standard in the Bergman space context \cite{HKZ}. For
$1\le p<+\infty$, they are Banach spaces, and for $0<p<1$, they are
quasi-Banach spaces. 
Clearly, we have the 
inclusion
\begin{equation}
L^p_{\alpha}(\D)\subset L^p_{\alpha'}(\D)\quad\text{for}\,\,\,\alpha<\alpha'.
\label{eq-contain1}
\end{equation}

\subsection{The $L^p$-type of a measurable function}
We use the standard weighted Lebesgue spaces to define the concept of
the $L^p$-type of a function.  

\begin{defn} $(0<p<+\infty)$
For a Borel measurable function $u:\D\to\C$, let its $L^p$-{\em type} be 
the number
\[
\beta_p(u):=\inf\{\alpha\in\R:\,u\in L^p_\alpha(\D)\},
\]
if the infimum is taken over a non-empty collection. If instead $u\notin
L^p_\alpha(\D)$ for every $\alpha\in\R$, we write 
$\beta_p(u):=+\infty$.
\end{defn}

The $L^p$-type measures the boundary growth or decay of the given function 
$u$. In particular, it is rather immediate that if $u$ has compact support 
in $\D$, then its $L^p$-type equals $\beta_p(u)=-\infty$ for all $p$,  
$0<p<+\infty$. It is a consequence of H\"older's inequality that for a fixed 
$u$, the function $p\mapsto\beta_p(u)$ is convex (interpreted liberally).

Here, we want to study the $p$-type in the context of the spaces of
$N$-harmonic functions $\mathrm{PH}_N(\D)$. If we think of the elements of the
space $\mathrm{PH}_N(\D)$ as physical states, we may interpret the $L^p$-type
$\beta_p(f)$ as the ``$p$-temperature'' of the state $f\in\mathrm{PH}_N(\D)$. 
If we fix $N$ and freeze the system -- i.e., we  consider only states of low 
``$p$-temperature'' -- we should expect that the degrees of freedom are 
reduced, and eventually, only the trivial state $0$ would remain. 
The ``$p$-temperature'' at which this transition occurs is the critical 
``$p$-temperature'' for the given $N$.    

In mathematical terms, we shall be concerned with the following problem. 
 
\begin{prob}
Given a positive integer $n$ and a $p$ with $0<p<+\infty$, what is the value
of
\begin{equation}
\beta(N,p):=\inf
\big\{\beta_p(f):\,f\in\mathrm{PH}_N(\D)\setminus\{0\}\big\}\,\,?
\label{eq-betapn}
\end{equation}
In other words, what is the smallest possible $L^p$-type of a non-trivial
function $f\in\mathrm{PH}_N(\D)$?
Moreover, is the above infimum attained (i.e., is it a minimum)?
\label{prob-1}
\end{prob} 

We call the function $p\mapsto\beta(N,p)$ the {\em critical integrability type 
curve for the $N$-harmonic functions}, and the function 
$(N,p)\mapsto\beta(N,p)$ 
the {\em critical integrability type curves for the polyharmonic functions}.

The notion of the critical integrability type is rather parallel to 
Makarov's integral means spectrum in the context of bounded univalent 
functions \cite{Mak} (see also \cite{HedShi05}); there, however, a ``sup'' is
used instead of an ``inf'' in the formula analogous to \eqref{eq-betapn}, so 
Makarov's integral means spectrum is automatically convex, which is not true
about the critical integrability type curve (see remark below). 

\begin{rem}
(a) Since we take an infimum over a collection of $f$, the property that
$p\mapsto \beta_p(f)$ is convex does not carry over to $p\mapsto \beta(N,p)$;
indeed, examples will show that $p\mapsto \beta(N,p)$ fails to be convex.

(b) If we were to replace the ``inf'' with a ``sup'' in the above 
definition \eqref{eq-betapn}, we would not obtain an interesting concept, 
as it is easy to construct a harmonic function $f$ on $\D$ with 
$\beta_p(f)=+\infty$ for all $p$, $0<p<+\infty$.
\end{rem}

\subsection{The Almansi expansion and the boundary decay of 
polyharmonic functions}
\label{subsec-Almansi}

The Almansi expansion \eqref{eq-Almansi1} can be expressed in the following 
form:
\begin{equation}
u(z)=v_0(z)+(1-|z|^2)v_1(z)+\cdots+(1-|z|^2)^{N-1}v_{N-1}(z),
\label{eq-Almansi2}
\end{equation}
where all the functions $v_j$ are harmonic in $\D$. In terms of the functions 
$u_j$ of \eqref{eq-Almansi1}, the functions $v_j$ are given as
\[
v_j=(-1)^j\sum_{k=j}^{N-1}\binom{k}{j}u_k.
\]
In view of \eqref{eq-Almansi2}, we might be inclined to believe that the
functions $u$ with 
\[
v_0=v_1=\cdots=v_{N-2}=0,
\] 
that is,
\begin{equation}
u=(1-|z|^2)^{N-1}v_{N-1}(z),
\label{eq-smallest?}
\end{equation}
should be the smallest near the boundary. To our surprise, we find
that this is not true, at least if we understand the question in terms of
Problem \ref{prob-1} with $0<p<\frac13$. Somehow the functions 
$v_0,\ldots,v_{N-1}$ can cooperate to produce non-trivial functions which 
decay faster than functions of the type \eqref{eq-smallest?}. We think of 
this phenomenon as an {\em entanglement} (see Section \ref{sec-mainres} 
for more details).

\subsection{The standard weighted Lebesgue spaces of 
polyharmonic functions} 
We put
\[
\mathrm{PH}_{N,\alpha}^p(\D):=\mathrm{PH}_N(\D)\cap L^p_\alpha(\D),
\]
and endow it with the norm or quasi-norm structure of $L^p_{\alpha}(\D)$.
This is the subspace of $L^p_\alpha(\D)$ consisting of $N$-harmonic functions. 
It turns out that it is a closed subspace; this is rather non-trivial for 
$0<p<1$, even for $N=1$. Actually, the proof is based on a property which we 
will refer to as {\em Hardy-Littlewood ellipticity of the Laplacian} 
(cf. \cite{HL}; see also \cite{Gar}, pp. 121--123).

\subsection{The harmonic case ($N=1$)}
In \cite{Ale}, Aleksandrov studied essentially our Problem \ref{prob-1}
in the case of $N=1$ (harmonic functions). 
To explain the result, we make some elementary calculations.
The constant function $U_0(z)\equiv1$ is harmonic, and 
\begin{equation}
U_0=1\in\mathrm{PH}^{p}_{1,\alpha}(\D)\,\,\,\Longleftrightarrow \,\,\,
\alpha>-1.
\label{eq-eq1}
\end{equation}
This shows that 
\begin{equation}
\beta_p(U_0)=-1.
\label{eq-betau0}
\end{equation}
Next, we turn to the Poisson kernel at $1$,
\[
U_1(z)=P(z,1)=\frac{1-|z|^2}{|1-z|^2}.
\]
We shall need the following lemma. The formulation involves the standard 
Pochhammer symbol notation $(x)_j:=x(x+1)\cdots(x+j-1)$.

\begin{lem}
Let $a,b$ be two real parameters with $b\ge0$. If we put
\[
I(a,b):=\int_\D\frac{(1-|z|^2)^a}{|1-z|^{2b}}\diff A(z),
\]
then $I(a,b)=+\infty$ if $a\le -1$ or if both $b>0$ and $a\le 2(b-1)$. If, 
on the other hand, $a>-1$ and $b=0$, then $I(a,b)=\pi/(a+1)$. Moreover, if
$a>-1$, $b>0$, and $a>2(b-1)$, then $I(a,b)<+\infty$, with value
\[
I(a,b)=\pi\sum_{j=0}^{+\infty}\frac{[(b)_j]^2}{j!(a+1)_{j+1}}.
\]
\label{lem-int}
\end{lem}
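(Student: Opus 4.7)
The plan is to compute $I(a,b)$ by separating the angular and radial parts of the integral and using the binomial series for $|1-z|^{-2b}$. Writing $z=r\e^{\imag\theta}$ with $0<r<1$, the series
\[
(1-z)^{-b}=\sum_{n=0}^{+\infty}\frac{(b)_n}{n!}z^n,\qquad
(1-\bar z)^{-b}=\sum_{n=0}^{+\infty}\frac{(b)_n}{n!}\bar z^n
\]
converge absolutely for $|z|<1$, and since $b\ge0$ all coefficients are non-negative, so every interchange of sum and integral below is legitimate by Fubini--Tonelli regardless of whether the value is finite. The orthogonality relation $\int_0^{2\pi}\e^{\imag(m-n)\theta}\diff\theta=2\pi\delta_{m,n}$ annihilates the off-diagonal terms of the Cauchy product and gives
\[
\int_0^{2\pi}\bigabs{1-r\e^{\imag\theta}}^{-2b}\diff\theta
=2\pi\sum_{n=0}^{+\infty}\frac{[(b)_n]^2}{(n!)^2}r^{2n}.
\]

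Next, I would substitute $u=r^2$ in each radial integral to recognize a Beta integral:
\[
\int_0^1(1-r^2)^ar^{2n+1}\diff r=\tfrac12 B(n+1,a+1)=\frac{n!}{2(a+1)_{n+1}},
\]
valid whenever $a>-1$. Assembling the pieces produces the claimed expression for $I(a,b)$. The case $b=0$ is the degenerate limit in which $(b)_n=0$ for all $n\ge1$ while $(b)_0=1$, so only the $n=0$ term survives and $I(a,0)=\pi/(a+1)$.

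Finally, I would read off the two divergence regimes. The case $a\le-1$ is immediate: $\int_0^1(1-r^2)^ar\diff r=+\infty$ already, and Tonelli applied to the non-negative integrand gives $I(a,b)=+\infty$. For the second regime $b>0$ with $a\le2(b-1)$, Stirling's asymptotic $(c)_n\sim n!\,n^{c-1}/\Gamma(c)$ yields
\[
\frac{[(b)_n]^2}{n!\,(a+1)_{n+1}}\sim\frac{\Gamma(a+1)}{\Gamma(b)^2}\,n^{2b-a-3}\quad\text{as }n\to+\infty,
\]
so the series converges if and only if $a>2(b-1)$. At the critical value $a=2(b-1)$ the series diverges like the harmonic series, while for $a<2(b-1)$ the terms are eventually bounded below (and in fact tend to $+\infty$), so in both subcases $I(a,b)=+\infty$. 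The only real obstacle is this convergence bookkeeping; but since every quantity in sight is non-negative, the Fubini--Tonelli justifications needed to equate the double integral with the series (with both sides allowed to be $+\infty$) are automatic.
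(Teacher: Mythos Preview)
Your proof is correct and follows essentially the same route as the paper's: expand $|1-z|^{-2b}$ via the binomial series, use orthogonality in the angular variable, evaluate the radial Beta integrals, and analyze the resulting series via Stirling-type asymptotics for $(c)_n$. You are somewhat more explicit about the Fubini--Tonelli justification and the asymptotic comparison than the paper, which simply invokes ``the standard approximate formulae for the Gamma function,'' but the argument is the same.
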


\begin{proof}
By Taylor expansion and polar coordinates, we find that
\begin{multline*}
I(a,b)=\int_\D|1-z|^{-2b}(1-|z|^2)^{a}\diff A(z)
=\int_\D\bigg|\sum_{j=0}^{+\infty}\frac{(b)_j}{j!}z^j\bigg|^{2}
(1-|z|^2)^{a}\diff A(z)
\\
=\sum_{j=0}^{+\infty}\frac{[(b)_j]^2}{[j!]^2}\int_\D|z|^{2j}
(1-|z|^2)^{a}\diff A(z)
=\pi\sum_{j=0}^{+\infty}\frac{[(b)_j]^2}{[j!]^2}\int_0^1 t^{j}
(1-t)^{a}\diff t.
\end{multline*}
The (Beta) integral on the right-hand side diverges for $a\le-1$, so that
$I(a,b)=+\infty$ then. For $a>-1$, the Beta integral is quickly evaluated, 
and we obtain that
\[
I(a,b)=\pi\sum_{j=0}^{+\infty}\frac{[(b)_j]^2}{j!(a+1)_{j+1}},
\end{equation*}
and for $b>0$ the sum on the right-hand side converges if and only if 
$a>2(b-1)$, 
by the standard approximate formulae for the Gamma function. 
\end{proof}

We see from Lemma \ref{lem-int} that 
\begin{equation}
U_1=P(\cdot,1)\in
\mathrm{PH}^{p}_{1,\alpha}(\D)\,\,\,\Longleftrightarrow \,\,\,
\alpha>\max\{p-2,-1-p\},
\label{eq-eq2}
\end{equation}
which implies that its $L^p$-type is 
\begin{equation}
\beta_p(U_1)=\max\{p-2,-1-p\}.
\label{eq-betau1}
\end{equation}

Now, in view of \eqref{eq-betau0} and \eqref{eq-betau1}, the critical 
integrability type $\beta(1,p)$ satisfies
\[
\beta(1,p)\le \min\{\beta_p(U_0),\beta_p(U_1)\}=
\min\big\{-1,\max\{p-2,-1-p\}\big\}.
\]

The 
profound
work of Aleksandrov \cite{Ale} is mainly concerned with harmonic functions 
in the unit ball of $\R^{d}$ and the maximal possible rate of decay of the
$L^p$-integral on concentric spheres $x_1^2+\cdots+x_{d}^2=r^2$. 
In the rather elementary planar case $d=2$, it gives to the 
following result (see also Suzuki \cite{Suz}).

\begin{thm}
We have that $\beta(1,p)=\min\{-1,\max\{p-2,-1-p\}\}$ for all $p$, 
$0<p<+\infty$. 
\end{thm}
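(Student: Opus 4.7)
The proof has two parts. For the upper bound $\beta(1,p)\le\min\{-1,\max\{p-2,-1-p\}\}$, take the constant function $U_0\equiv 1$, with $\beta_p(U_0)=-1$ by \eqref{eq-betau0}, and the Poisson kernel $U_1=P(\cdot,1)$, with $\beta_p(U_1)=\max\{p-2,-1-p\}$ by \eqref{eq-betau1}. Since $\beta(1,p)$ is defined as an infimum, $\beta(1,p)\leq\min\{\beta_p(U_0),\beta_p(U_1)\}$, giving the required bound.

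\textbf{Lower bound, $p\ge 1$.} In this range $\max\{p-2,-1-p\}\ge -1$, so it suffices to show $\beta(1,p)\ge -1$. Let $u\in\mathrm{PH}_1(\D)$ satisfy $\|u\|_{p,\alpha}<+\infty$ with $\alpha\leq -1$. Subharmonicity of $|u|^p$ (valid for $p\geq 1$) yields the circular sub-mean value $2\pi|u(0)|^p\leq\int_0^{2\pi}|u(re^{i\theta})|^p\,\diff\theta$ for $0<r<1$. Multiplying by $r(1-r^2)^\alpha$ and integrating in $r$,
\[
|u(0)|^p\int_0^1 r(1-r^2)^\alpha\,\diff r\leq\frac{1}{2\pi}\|u\|_{p,\alpha}^p<+\infty.
\]
Since the left integral diverges for $\alpha\leq -1$, we conclude $u(0)=0$. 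For general $z_0\in\D$, apply this to $u\circ\varphi_{z_0}$, where $\varphi_{z_0}$ is the M\"obius automorphism of $\D$ sending $0$ to $z_0$; the Jacobian computation gives $\|u\circ\varphi_{z_0}\|_{p,\alpha}^p=(1-|z_0|^2)^{\alpha+2}\int_\D|u(z)|^p(1-|z|^2)^\alpha|1-\bar z_0 z|^{-2\alpha-4}\,\diff A(z)$, and since $|1-\bar z_0 z|^{-2\alpha-4}$ is bounded on $\D$ (with bound depending on $z_0$), $\|u\circ\varphi_{z_0}\|_{p,\alpha}<+\infty$, forcing $u(z_0)=0$. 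Hence $u\equiv 0$.

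\textbf{Lower bound, $0<p<1$.} Here $|u|^p$ need not be subharmonic, and the critical exponent drops below $-1$. The substitute is the Hardy--Littlewood ellipticity of the Laplacian (cf.\ Pavlovi\'c \cite{Pa1}, \cite{Pa2}): for any harmonic $u$ on $\D$ and $p>0$,
\[
|u(z_0)|^p\leq\frac{C_p}{(1-|z_0|)^2}\int_{\D(z_0,(1-|z_0|)/2)}|u(w)|^p\,\diff A(w),
\]
yielding the pointwise estimate $|u(z)|\leq C(1-|z|)^{-(2+\alpha)/p}\|u\|_{p,\alpha}$. By itself this only forces $u\equiv 0$ for $\alpha<-2$, short of the critical value $\max\{p-2,-1-p\}>-2$. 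To close the gap, one invokes the companion Hardy--Littlewood quasi-monotonicity of the means $r\mapsto M_p(r,u)$, combined with the growth bound, to identify $u$ as the Poisson integral of a boundary distribution of bounded order; its Fourier coefficients $a_n$ in the expansion $u(re^{i\theta})=\sum_{n\in\mathbb{Z}} a_n r^{|n|}e^{in\theta}$ grow only polynomially, and the weighted $L^p$ constraint then forces these coefficients to vanish. The main obstacle is capturing the exact piecewise-linear critical exponent: the extremal competitor is $P(\cdot,1)$, and one must rule out strictly faster decay for any non-trivial harmonic function, with the dichotomy $p\leq 1/2$ versus $p\geq 1/2$ reflecting whether the binding constraint comes from the global contribution (far from the distinguished boundary point) or the local blow-up (near it).
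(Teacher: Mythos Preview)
Your upper bound is fine and matches the paper. Your lower bound for $p\ge 1$ is correct and in fact cleaner than the paper's route: the paper (via Proposition~\ref{prop-5.5} specialized to $N=1$) argues that $\alpha\le-1$ forces $\liminf_{\varrho\to1^-}\int_{\Te(0,\varrho)}|u|\,\diff s=0$ and then feeds this into the Poisson representation, whereas you use subharmonicity of $|u|^p$ at the origin together with a M\"obius change of variables. Both work; yours is shorter.

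The case $0<p<1$, however, is not a proof. You correctly observe that the raw pointwise Hardy--Littlewood bound only kills $u$ for $\alpha<-2$, and then you write that ``one invokes the companion Hardy--Littlewood quasi-monotonicity of the means\dots its Fourier coefficients\dots grow only polynomially, and the weighted $L^p$ constraint then forces these coefficients to vanish.'' None of this is carried out. Knowing that $u$ is the Poisson integral of a distribution of finite order is far from enough: it does not by itself interact with the weighted $L^p$ hypothesis to force the Fourier coefficients to vanish, and you give no mechanism for the promised ``dichotomy $p\le1/2$ versus $p\ge1/2$.'' This is precisely the place where the actual work lies.

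The paper's argument (still within its general $N$-harmonic machinery, specialized to $N=1$) handles the two subranges by two distinct, concrete devices. For $\tfrac12\le p\le1$ (critical value $p-2$), Proposition~\ref{prop-5.5} uses the pointwise estimate $(1-|z|)|u(z)|\le C\|u\|_{p,p-2}$ to bootstrap $|u|^p$ to $|u|$, obtaining $\int_\D|u(z)|(1-|z|^2)^{-1}\diff A(z)<+\infty$; this yields $\liminf_{\varrho\to1^-}\int_{\Te(0,\varrho)}|u|\,\diff s=0$, and the Poisson formula then gives $u\equiv0$. For $0<p\le\tfrac12$ (critical value $-1-p$), Proposition~\ref{prop-5.6} passes to the derivatives: $\partial_z u$ is holomorphic and $\bar\partial_z u$ is conjugate-holomorphic, Corollary~\ref{cor-3} places both in $L^p_{-1}(\D)$, and now $|\partial_z u|^p$ \emph{is} subharmonic (holomorphy, not merely harmonicity), so the monotone-means argument forces $\partial_z u=\bar\partial_z u=0$, whence $u$ is constant and then zero. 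You should replace your final paragraph with one of these two arguments in each subrange.
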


This has the interpretation that the constant function $U_0=1$ 
and the Poisson kernel $U_1=P(\cdot,1)$ are jointly extremal for the problem of
determining the critical $L^p$-type for harmonic functions. 
Indeed, for $0<p\le1$, we have $\beta(1,p)=\beta_p(U_1)$, while 
for $1\le p<+\infty$, we have instead $\beta(1,p)=\beta_p(U_0)$. The 
function $\beta(1,p)$ is therefore continuous and piecewise affine: 
$\beta(1,p)=-1-p$ for $0<p\le\frac12$, $\beta(1,p)=p-2$ for 
$\frac12\le p\le1$, and $\beta(1,p)=-1$ for $1\le p<+\infty$.

\section{Main results}
\label{sec-mainres}

\subsection{Characterization of the critical integrability type curve}

We let $b_j(p)$ be the function
\begin{equation}
b_{j,N}(p):=\max\big\{-1-(j+N-1)p,-2+(j-N+1)p\big\},\qquad j=1,\ldots,N,
\label{eq-bj}
\end{equation}
whose graph is piecewise affine, while for $j=0$ we put
\begin{equation}
b_{0,N}(p):=-1-(N-1)p,
\label{eq-b0}
\end{equation}
which is affine. It is easy to check that
\begin{equation}
b_{j,N+1}(p)+p=b_{j,N}(p),\qquad j=0,\ldots,N.
\label{eq-brel}
\end{equation}
We present our first main theorem.

\begin{thm}
$(0<p<+\infty)$ For $N=1,2,3,\ldots$ and for real $\alpha$, we have that 
\[
\mathrm{PH}^p_{N,\alpha}(\D)=\{0\}\,\,\,\iff\,\,\,
\alpha\le\min_{j:0\le j\le N} b_{j,N}(p).
\]
\label{thm-main2}
\end{thm}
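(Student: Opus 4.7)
The plan is to prove the equivalence in two steps, corresponding to the non-triviality direction (when $\alpha > \min_{0 \le j \le N} b_{j,N}(p)$) and the triviality direction (when $\alpha \le \min_{0 \le j \le N} b_{j,N}(p)$).

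For the non-triviality direction, I would exhibit, for each $j \in \{0, 1, \ldots, N\}$, an explicit non-trivial $N$-harmonic function whose $L^p$-type is exactly $b_{j,N}(p)$, so that by \eqref{eq-contain1} the space $\mathrm{PH}^p_{N,\alpha}(\D)$ is non-trivial for every $\alpha > b_{j,N}(p)$. The case $j = 0$ is immediate: $(1-|z|^2)^{N-1}$ is $N$-harmonic and Lemma \ref{lem-int} (with $b = 0$) shows it has $L^p$-type $-1-(N-1)p = b_{0,N}(p)$. For $j = 1$, the Almansi-type function $(1-|z|^2)^{N-1} P(z,1)$ is $N$-harmonic, and Lemma \ref{lem-int} gives its $L^p$-type as $\max\{-1-Np,-2+(2-N)p\}=b_{1,N}(p)$. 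For $j \ge 2$, simple products of $(1-|z|^2)^k$ with a single harmonic function with higher singularity at $z = 1$ do \emph{not} achieve $b_{j,N}(p)$, because the constraint of $N$-harmonicity limits the power of $(1-|z|^2)$. Instead, I would look for entangled Almansi sums $u=\sum_{k=0}^{N-1}(1-|z|^2)^k v_k$ where the harmonic components $v_k$ are taken to be increasingly singular combinations built from $P(z,1)$ and its $z$-derivatives $(1-z)^{-m}$, with coefficients chosen so that the leading boundary singularities in the individual terms cancel. The role of the promised cellular decomposition is precisely to organize these entangled extremals.

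For the triviality direction, I would argue by induction on $N$, with the base case $N=1$ being the theorem of Aleksandrov already stated in the excerpt. For the inductive step, I would start from $u \in \mathrm{PH}^p_{N,\alpha}(\D)$ with $\alpha \le \min_j b_{j,N}(p)$ and try to force $u \equiv 0$. The relation $b_{j,N+1}(p)+p = b_{j,N}(p)$ strongly suggests that the right reduction is to pass from $N$ to $N-1$ by shifting the weight by $p$. Concretely, one might localize the analysis near the boundary circle, use the Hardy--Littlewood ellipticity of $\Delta$ (mentioned at the end of the excerpt) to deduce pointwise decay of $u$ from its $L^p_\alpha$-integrability, and then apply $\Delta$ or a suitable weighted projection operator to pass to an $(N-1)$-harmonic function lying in a smaller weighted space covered by the inductive hypothesis. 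Combined with the local analogue \eqref{eq-brydecay4.1} (Theorem 8.1.1, quoted but not proved in the excerpt) to handle boundary points individually and then integrate this pointwise vanishing back to a global statement.

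The main obstacle is precisely the entanglement phenomenon flagged in Subsection \ref{subsec-Almansi}: the harmonic components $v_k$ in the Almansi expansion of $u$ are unique, but they typically have strictly larger $L^p$-types than $u$ itself, so a naive term-by-term application of Aleksandrov's theorem is doomed. Equivalently, this is why $p\mapsto\beta(N,p)$ need not be convex even though each $p\mapsto\beta_p(v_k)$ is. Overcoming this requires replacing the Almansi basis with the cellular decomposition, whose cells are chosen so that each cell has a cleanly determined critical type $b_{j,N}(p)$; and then showing that the $L^p_\alpha$-norm of $u$ controls each cellular component separately. Getting this separation of cellular components to survive the low-$p$ regime $0<p<1$, where the $L^p_\alpha$-quasi-norm behaves poorly and standard duality is unavailable, will be the technically hardest step.
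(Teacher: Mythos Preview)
Your high-level architecture is right --- explicit extremals for the non-triviality direction, and an induction on $N$ organized around the cellular decomposition for the triviality direction --- and this is indeed how the paper proceeds. But two concrete ingredients are missing, and without them the plan does not close.

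\textbf{Non-triviality.} You assert that for $j\ge2$ a simple closed-form extremal cannot exist and that one must build entangled Almansi sums. This is false: the kernel
\[
U_{j,N}(z)=\frac{(1-|z|^2)^{N+j-1}}{|1-z|^{2j}}
\]
is $N$-harmonic on $\C\setminus\{1\}$ for every $j=0,\ldots,N$ (Lemma~\ref{lem-6.1}), despite the power of $1-|z|^2$ exceeding $N-1$; the point is that $|1-z|^{-2j}$ is itself $j$-harmonic, not harmonic. A direct application of Lemma~\ref{lem-int} then gives $\beta_p(U_{j,N})=b_{j,N}(p)$ exactly (Lemma~\ref{lem-6.2}). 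So the non-triviality half needs no entanglement and no cellular machinery at all.

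\textbf{Triviality.} Your proposal to ``apply $\Delta$ or a suitable weighted projection operator'' and to invoke the local theorem (Theorem~\ref{thm-8.1.1}) is where the real gap lies. Applying $\Delta$ loses too much: it shifts the weight by $2p$ rather than $p$, and the recursion $b_{j,N+1}(p)+p=b_{j,N}(p)$ tells you the correct shift is $p$. The paper's key discovery is the second-order operator
\[
\Lop_{\theta}[u](z)=(1-|z|^2)\Delta u(z)+4\theta\,r\partial_r u(z)-4\theta^2 u(z),
\]
which satisfies the factorization $\Mop^N\Delta^N=\Lop_0\Lop_1\cdots\Lop_{N-1}$ (Proposition~\ref{prop-basicoe1}) and the intertwining relations $\Delta\Lop_\theta=\Lop_{\theta-1}\Delta$, $\Lop_\theta\Mop=\Mop\Lop_{\theta-1}-8\theta\Iop$. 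These give precisely that $\Lop_{N-1}$ maps $\mathrm{PH}^p_{N,\alpha}(\D)$ into $\mathrm{PH}^p_{N-1,\alpha+p}(\D)$ (Proposition~\ref{prop-7.7}), which is the inductive step you want. The cellular decomposition is then \emph{built} by this induction, and once each $u$ is decomposed into pieces $\Mop^j[w_j]$ with $\Lop_{N-j-1}[w_j]=0$, the vanishing of each piece for $\alpha\le a_{N-j,N}(p)$ is handled by the divisibility criterion of Proposition~\ref{prop-5.5} together with Proposition~\ref{prop-5.6}. The local uniqueness theorem is not used here (in fact the logical dependence runs the other way), so that part of your plan should be dropped.

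Your worry about $0<p<1$ is legitimate but is resolved entirely by the Hardy--Littlewood ellipticity lemma (Lemma~\ref{thm-1}, Pavlovi\'c): it converts $L^p_\alpha$-membership into pointwise bounds that are good enough to run the divisibility argument without any duality.
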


As an immediate consequence, we obtain the explicit evaluation of the critical
integrability type curve.

\begin{thm}[The sawtooth theorem]
The critical integrability type for the polyharmonic functions is given by
\[
\beta(N,p)=\min_{j:0\le j\le N} b_{j,N}(p),
\]
for $0<p<+\infty$ and $N=1,2,3,\ldots$.
\label{thm-main1}
\end{thm}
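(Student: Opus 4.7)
The plan is to derive Theorem \ref{thm-main1} as a direct consequence of Theorem \ref{thm-main2}, so the argument is essentially a bookkeeping exercise with definitions. First I would recast the critical type $\beta(N,p)$ using the monotone inclusion $L^p_\alpha(\D)\subset L^p_{\alpha'}(\D)$ for $\alpha<\alpha'$ recorded in \eqref{eq-contain1}. This monotonicity implies that for each measurable $f$ the set $\{\alpha:f\in L^p_\alpha(\D)\}$ is a right half-line with left endpoint $\beta_p(f)$; in particular, $\beta_p(f)<\alpha$ already forces $f\in L^p_\alpha(\D)$ by taking an intermediate $\alpha'\in(\beta_p(f),\alpha)$ and applying \eqref{eq-contain1}. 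From this one obtains the reformulation
\[
\beta(N,p)=\inf\bigl\{\alpha\in\R:\,\mathrm{PH}^p_{N,\alpha}(\D)\neq\{0\}\bigr\}.
\]
The inequality $\le$ holds because any nontrivial $f\in L^p_\alpha(\D)$ contributes $\beta_p(f)\le\alpha$ to the infimum defining $\beta(N,p)$; the reverse inequality holds because for every $\varepsilon>0$ there is a nontrivial $f\in\mathrm{PH}_N(\D)$ with $\beta_p(f)<\beta(N,p)+\varepsilon$, whence $f\in L^p_{\beta(N,p)+\varepsilon}(\D)$ by the previous observation, so $\mathrm{PH}^p_{N,\beta(N,p)+\varepsilon}(\D)\neq\{0\}$.

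Given this reformulation, Theorem \ref{thm-main2} applies as a black box: it identifies the set inside the infimum as the open half-line $\{\alpha:\alpha>\min_{0\le j\le N}b_{j,N}(p)\}$, whose infimum is $\min_{0\le j\le N}b_{j,N}(p)$. The formula $\beta(N,p)=\min_{0\le j\le N}b_{j,N}(p)$ then follows immediately, and the sawtooth shape---piecewise affine but typically non-convex, as forewarned in the remark---is inherited automatically from the fact that each $b_{j,N}$ is itself a maximum of two affine functions, so the outer minimum over $j$ produces the characteristic sawtooth profile.

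All the substance resides in Theorem \ref{thm-main2}, not in the deduction above. The hard part, which this corollary takes for granted, is the two-sided statement of Theorem \ref{thm-main2}: producing, for every $\alpha$ strictly above $\min_j b_{j,N}(p)$, a nonzero $N$-harmonic function in $L^p_\alpha(\D)$ realizing each affine piece (presumably via the cellular decomposition advertised in the introduction, with extremal candidates built from polyharmonic analogues of the constant function $U_0$ and of the Poisson kernel $U_1=P(\cdot,1)$ that already exhaust the harmonic case $N=1$), and conversely establishing rigidity on or below the critical sawtooth by sharp weighted estimates on the Almansi components $u_0,\ldots,u_{N-1}$ from \eqref{eq-Almansi1}. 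The entanglement phenomenon flagged after \eqref{eq-smallest?} suggests that these estimates will not decouple into separate bounds on the individual $u_j$, which is what makes Theorem \ref{thm-main2} itself---rather than its formal corollary above---the genuinely substantive result.
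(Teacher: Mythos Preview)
Your proposal is correct and matches the paper's approach exactly: the paper states Theorem \ref{thm-main1} as ``an immediate consequence'' of Theorem \ref{thm-main2} without further comment, and your argument simply spells out the elementary bookkeeping (via the monotone inclusion \eqref{eq-contain1}) that turns the triviality criterion of Theorem \ref{thm-main2} into the evaluation of the infimum defining $\beta(N,p)$. Your closing speculation about how Theorem \ref{thm-main2} itself is proved is broadly on target---the extremal functions are indeed polyharmonic Poisson-type kernels $U_{j,N}$ generalizing $U_0$ and $U_1$, and the rigidity side ultimately rests on the cellular decomposition rather than on direct Almansi estimates---but, as you correctly note, none of that is needed for the deduction of Theorem \ref{thm-main1} from Theorem \ref{thm-main2}.
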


%
Being the minimum of a finite number of continuous piecewise affine functions,
the function $p\mapsto\beta(N,p)$ is then continuous and piecewise affine.
It is easy to check that
\begin{equation}
\beta(N,p)=\min_{j:0\le j\le N}b_{j,N}(p)=\min\{b_{0,N}(p),b_{1,N}(p)\}
\quad\text{for}\,\,\,\tfrac13\le p <+\infty,
\label{eq-beta1infty}
\end{equation}
and since $b_{0,N}(p)$ and $b_{1,N}(p)$ equal the $L^p$-type of the 
$N$-harmonic functions $z\mapsto (1-|z|^2)^{N-1}$ and $z\mapsto(1-|z|^2)^N
/|1-z|^{2}$, we may interpret this as concrete support for
the intuition of Subsection \ref{subsec-Almansi} (based on the Almansi
expansion) for $\frac13\le p<+\infty$. 
However, it is again easy to verify that 
\[
\beta(N,p)=\min_{j:0\le j\le N}b_{j,N}(p)<\min\{b_{0,N}(p),b_{1,N}(p)\}
\quad\text{for}\,\,\,0<p<\tfrac13,
\]
so the intuition fails then, and we interpret this as the appearance 
of entanglement.

\begin{rem}
We draw the graphs of $p\mapsto\beta(N,p)$, for $N=2,3$, in 
Figures \ref{fig-1} ($N=2$) and \ref{fig-2} ($N=3$), respectively.
\end{rem}

We prove Theorem \ref{thm-main2} in Section \ref{sec-6}; the work is based on
the property of the $N$-Laplacian which we call Hardy-Littlewood
ellipticity (see Section \ref{sec-4}). 

\subsection{The structure of polyharmonic functions}
We pass to the study of the structure of the space $\mathrm{PH}^p_{N,\alpha}
(\D)$ when the space contains nontrivial elements.
We fix an integer $N=2,3,4,\ldots$, and let $\mathcal{A}_N\subset\R^2$ 
be the open set
\[
\mathcal{A}_N:=\big\{(p,\alpha):\,\,0<p<+\infty\,\,\text{and}\,\,
\min_{j:0\le j\le N} b_{j,N}(p)<\alpha\big\};
\]
then the assertion of Theorem \ref{thm-main2} is equivalent to the statement
\[
(p,\alpha)\in\mathcal{A}_N \,\, \iff\,\, \mathrm{PH}^p_{N,\alpha}(\D)\ne\{0\}.
\]
We will at times refer to $\mathcal{A}_N$ as the {\em admissible region}.
Let us introduce the second order elliptic partial differential 
operator $\Lop_{\theta}$ indexed by a real parameter $\theta$,  
\begin{equation}
\Lop_{\theta}[u](z):=
(1-|z|^2)\Delta u(z)+4\theta[z\partial_z u(z)+\bar z\bar\partial_z u(z)]
-4\theta^2u(z).
\label{eq-LN-0}
\end{equation}
We note that (when desirable) the complex derivatives can be eliminated by 
considering polar coordinates:
\[
r\partial_r=z\partial_z+\bar z\bar\partial_z.
\] 
We remark here that it is possible to represent $\Lop_\theta$ as 
\[
\Lop_\theta[u]=(1-|z|^2)^{2\theta+1}\nabla\cdot\{(1-|z|^2)^{-2\theta}\nabla u\}
-4\theta^2u, 
\]
and that $\Lop_\theta$ is somewhat analogous to the partial differential
operators considered in the theory of generalized axially symmetric potentials
\cite{Wein} in the context of half-planes.  
To simplify the notation, we let $\Mop$ denote the multiplication operator 
given by
\[
\Mop[v](z):=(1-|z|^2)v(z).
\]
We remark further that the operators $\Lop_{\theta}$ are related to
the operators $D_\alpha$ introduced by Olofsson in \cite{Olof2}: 
$4\Mop^{\alpha+1} D_\alpha=\Lop_{\alpha/2}$. 
Moreover, we observe that for $j=0,\ldots,N-1$,
\begin{equation}
\Mop^j[v]\in L^p_\alpha(\D)
\,\,\iff\,\,v\in  L^p_{\alpha+jp}(\D),
\label{eq-multspaces1}
\end{equation}
so that
\begin{equation}
\Mop^j[v]\in \mathrm{PH}^p_{N,\alpha}(\D) \,\,\text{and}\,\, \Delta^{N-j}v=0
\,\,\,\iff\,\,\,v\in \mathrm{PH}^p_{N-j,\alpha+jp}(\D).
\label{eq-multspaces2}
\end{equation}

The following is our main structure theorem for the $N$-harmonic functions.

\begin{thm} {\rm(The cellular decomposition theorem)}
Let $\alpha$ be real, and let $p$ be positive.
Then, for $N=1,2,3,\ldots$, every $u\in\mathrm{PH}^p_{N,\alpha}(\D)$ has a 
unique decomposition
\[
u=w_0+\Mop[w_1]+\cdots+\Mop^{N-1}[w_{N-1}],
\]
where each term $\Mop^j[w_j]$ is in $\mathrm{PH}^p_{N,\alpha}(\D)$,
while the functions $w_j$ are $(N-j)$-harmonic and solve the partial 
differential equation $\Lop_{N-j-1}[w_j]=0$ on $\D$, for $j=0,\ldots,N-1$. 
\label{thm-entamain2}
\end{thm}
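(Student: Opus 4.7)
The plan is to argue by induction on $N$. The base case $N=1$ is immediate: take $w_0:=u$, since $\Lop_0=\Mop\Delta$ vanishes on harmonic functions. For the inductive step, fix $N\ge 2$ and $u\in\mathrm{PH}^p_{N,\alpha}(\D)$; the strategy is to peel off a single summand $w_0$ annihilated by $\Lop_{N-1}$ and then apply the inductive hypothesis to $\tilde u:=(u-w_0)/(1-|z|^2)$, which will turn out to be $(N-1)$-harmonic and to lie in $\mathrm{PH}^p_{N-1,\alpha+p}(\D)$.

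The first task is to prove, at the purely algebraic level, the direct sum decomposition
\[
\mathrm{PH}_N(\D)\;=\;\big(\mathrm{PH}_N(\D)\cap\ker\Lop_{N-1}\big)\;\oplus\;\Mop\big[\mathrm{PH}_{N-1}(\D)\big].
\]
I would work Fourier mode by Fourier mode in $\theta$: each mode $\e^{\imag n\theta}$ of $\mathrm{PH}_N(\D)$ is $N$-dimensional, spanned by $r^{|n|+2k}$ for $k=0,1,\ldots,N-1$. A direct computation using $\Delta=4\partial_z\bar\partial_z$ and $r\partial_r=z\partial_z+\bar z\bar\partial_z$ yields
\[
\Lop_{N-1}[|z|^{2k}z^n]=4k(k+n)|z|^{2(k-1)}z^n+4(N-1-k)(n-N+1+k)|z|^{2k}z^n
\]
for $n\ge 0$ (and analogously for $n<0$ using $\bar z^{|n|}$), which produces a two-term recursion on the coefficients $a_{n,k}$ of a kernel element. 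Since the coefficient of $a_{n,k+1}$ in that recursion is strictly positive, it determines $a_{n,k}$ from $a_{n,0}$, so $\dim(\ker\Lop_{N-1}\cap\mathrm{PH}_N)=1$ per mode. Meanwhile $\Mop[\mathrm{PH}_{N-1}]$ contributes the remaining $N-1$ dimensions. Their intersection is trivial: an explicit evaluation of the iterated product gives $\sum_{k=0}^{N-1}a_{n,k}=[(2N-2)!/(N-1)!]/(|n|+1)_{N-1}$, a non-zero multiple of $a_{n,0}$, so a non-zero kernel element cannot vanish on $\Te$ and therefore cannot be of the form $(1-|z|^2)\tilde w$. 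This settles both the algebraic splitting and the uniqueness half of the theorem.

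With this splitting in hand, let $w_0$ be the projection of $u$ onto the kernel summand. The decisive step is to verify $w_0\in L^p_\alpha(\D)$; then $\tilde u\in L^p_{\alpha+p}(\D)$ by \eqref{eq-multspaces1}, hence $\tilde u\in\mathrm{PH}^p_{N-1,\alpha+p}(\D)$, and the inductive hypothesis yields $\tilde u=\sum_{j=0}^{N-2}\Mop^j[w'_j]$. Setting $w_{j+1}:=w'_j$ and using \eqref{eq-multspaces2} to transfer the integrability of each term produces $u=\sum_{j=0}^{N-1}\Mop^j[w_j]$ with the required properties. To establish the $L^p_\alpha$-integrability of $w_0$, I would express it via the leading harmonic term $v_0$ of the Almansi expansion $u=\sum_{j=0}^{N-1}(1-|z|^2)^jv_j$ as $w_0=v_0+\sum_{k=1}^{N-1}(1-|z|^2)^kD_k[v_0]$, where $D_k$ are polynomials in $r\partial_r$ read off from iterating the $\Lop_{N-1}$ recursion above; standard weighted $L^p$-estimates for harmonic derivatives then bound each $(1-|z|^2)^kD_k[v_0]$ in $L^p_\alpha(\D)$ by $v_0$ in $L^p_\alpha(\D)$.

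The main obstacle is controlling the leading Almansi component $v_0$ by $u$ in the weighted $L^p$-sense, i.e., showing that $v_0\in L^p_\alpha(\D)$ whenever $u\in L^p_\alpha(\D)$. This is most delicate in the small-$p$, negative-$\alpha$ regime appearing in Theorem~\ref{thm-main2}, and is precisely where the Hardy-Littlewood ellipticity of the $N$-Laplacian developed in Section~\ref{sec-4} enters: it propagates $L^p_\alpha$-integrability from $u$ down to its individual Almansi constituents, after which the rest of the proof assembles the cellular decomposition inductively, term by term.
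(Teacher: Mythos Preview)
Your induction scheme and the algebraic direct-sum splitting
\[
\mathrm{PH}_N(\D)=\big(\mathrm{PH}_N(\D)\cap\ker\Lop_{N-1}\big)\oplus\Mop\big[\mathrm{PH}_{N-1}(\D)\big]
\]
per Fourier mode are fine, and your uniqueness argument is sound. The genuine gap is in the existence part, precisely at the step you flag as the ``main obstacle'': the claim that $u\in L^p_\alpha(\D)$ forces the leading alternative-Almansi term $v_0\in L^p_\alpha(\D)$. This is \emph{false} in general, and Hardy--Littlewood ellipticity does not salvage it. For a concrete counterexample take $N=2$, $0<p<\tfrac13$, and $\alpha$ with $-1-3p<\alpha\le-1-2p$ (the entangled cell). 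The biharmonic function $u=U_{2,2}(z)=(1-|z|^2)^3|1-z|^{-4}$ lies in $\mathrm{PH}^p_{2,\alpha}(\D)$ by Lemma~\ref{lem-6.2}, and since $\Lop_1[U_{2,2}]=0$ one has $u=w_0$ and hence $v_0\ne0$ (else $u\equiv0$ via the relation \eqref{xdc}). But $v_0$ is a nontrivial harmonic function, so by the $N=1$ case of Theorem~\ref{thm-main2} it cannot belong to $L^p_\alpha(\D)$ unless $\alpha>\beta(1,p)=-1-p$; our $\alpha\le-1-2p<-1-p$ rules this out. What Hardy--Littlewood ellipticity actually gives (see Corollary~\ref{cor-5}) is control of Almansi components only in \emph{worse} weights, not in $L^p_\alpha$ itself; this is exactly why the entanglement phenomenon occurs.

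The paper's proof sidesteps this trap. Rather than projecting $u$ onto $\ker\Lop_{N-1}$ and then trying to verify integrability of the projection, it first applies $\Lop_{N-1}$ to $u$: by Proposition~\ref{prop-7.7} (a direct consequence of Hardy--Littlewood ellipticity) one has $\Lop_{N-1}[u]\in\mathrm{PH}^p_{N-1,\alpha+p}(\D)$, so the induction hypothesis applies to $\Lop_{N-1}[u]$, yielding a decomposition $\Lop_{N-1}[u]=\sum_{j=0}^{N-2}\Mop^j[h_j]$ with each term in $\mathrm{PH}^p_{N-1,\alpha+p}(\D)$. One then writes down an explicit correction $H=\sum c_j\Mop^{j+1}[h_j]$ (constants chosen via the commutation identity \eqref{eq-operiden2}) so that $\Lop_{N-1}[u+H]=0$; the integrability $H\in L^p_\alpha(\D)$ is then automatic from \eqref{eq-multspaces1}, and $w_0:=u+H$ inherits it. The point is that $w_0$ is produced as $u$ plus something manifestly in $L^p_\alpha$, never requiring control of $v_0$ separately. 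Your route can be repaired by adopting this ``apply $\Lop_{N-1}$ first'' maneuver in place of the projection.
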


\begin{rem}
(a) In the context of Theorem \ref{thm-entamain2}, the mapping 
$u\mapsto \Mop^j[w_j]$ defines an idempotent operator $\mathbf{P}_j$ 
on $\mathrm{PH}^p_{N,\alpha}(\D)$, for $j=0,\ldots,N-1$. It is clear 
from the proof of the theorem that each $\mathbf{P}_j$ acts continuously on
$\mathrm{PH}^p_{N,\alpha}(\D)$, and that $\mathbf{P}_j\mathbf{P}_k=0$ for 
$j\ne k$.

\noindent (b) The above cellular decomposition theorem should be compared 
not only with the alternative Almansi respresentation, but also with a 
result of Weinstein (see \cite{Wein2}, p. 251).
\end{rem}

Though reminiscent of the alternative Almansi expansion \eqref{eq-Almansi2} 
(compare with Pavlovi\'c \cite{Pa1}),
the expansion of Theorem \ref{thm-entamain2} is different, because here, the 
functions $w_j$ are not assumed harmonic, instead they solve the partial 
differential equation $\Lop_{N-j-1}[w_j]=0$. We suggest to call the 
expansion of Theorem \ref{thm-entamain2} the {\em cellular decomposition},
because it relates to the cell structure of the admissible region 
$\mathcal{A}_N$ (see below).
It is a crucial feature of Theorem \ref{thm-entamain2} that each term of the 
decomposition remains in the space $\mathrm{PH}^p_{N,\alpha}(\D)$. This means
that we may analyze each term separately. 

\subsection{The structure of polyharmonic functions: the 
entangled and unentangled regions}

We consider the following relatively closed bounded subset of the admissible
region $\mathcal{A}_N$
\[
\mathcal{E}_N:=\big\{(p,\alpha)\in \mathcal{A}_N:
\,\,0<p<\tfrac13\,\,\text{and}\,\,
\alpha\le -1-Np\big\}.
\]
We will refer to $\mathcal{E}_N$ as the {\em entangled region}. The 
complement $\mathcal{N}_N:=\mathcal{A}_N\setminus\mathcal{E}_N$ is then open, 
and we call it the {\em unentangled region}. These two regions will be 
further subdivided into smaller units which we refer to as {\em cells}. 
In particular, the unentangled region has a 
{\em principal unentangled cell}, 
\[
\mathcal{N}_N^{(1)}:=\big\{(p,\alpha)\in \mathcal{N}_N:
\,\,\tfrac13<p\,\,\text{and}\,\,
\alpha\le \min\{-2+(3-N)p,-1+(2-N)p\}\big\},
\]
which is relatively closed in $\mathcal{N}_N$. 

\begin{prop}
We have that $(p,\alpha)\in\mathcal{A}_N$ belongs to the entangled region 
$\mathcal{E}_N$ if and only if
\[
u=\Mop^{N-1}[v]\in \mathrm{PH}^p_{N,\alpha}(\D) \,\,\,\text{for harmonic}
\,\,\,v\,\,\,\implies\,\,\,u=0.
\]
\label{prop-ent3.4}
\end{prop}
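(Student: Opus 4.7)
The plan is to reduce the question to the harmonic case $N=1$, where Aleksandrov's theorem gives the complete answer, and then to match the resulting threshold with the definitions of $\mathcal{A}_N$ and $\mathcal{E}_N$. I would first invoke \eqref{eq-multspaces2} with $j=N-1$: for harmonic $v$, the membership $u=\Mop^{N-1}[v]\in\mathrm{PH}^p_{N,\alpha}(\D)$ is equivalent to $v\in\mathrm{PH}^p_{1,\alpha+(N-1)p}(\D)$, and since $(1-|z|^2)^{N-1}$ does not vanish on $\D$, the vanishing $u=0$ amounts to $v=0$. By Aleksandrov's theorem (the $N=1$ case of Theorem~\ref{thm-main2}, already recorded in the excerpt), $\mathrm{PH}^p_{1,\alpha+(N-1)p}(\D)$ is nontrivial exactly when $\alpha+(N-1)p>\beta(1,p)$. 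Hence the implication of the proposition holds if and only if $\alpha\le\beta(1,p)-(N-1)p$. Using $\beta_p(U_0)=-1$, $\beta_p(U_1)=\max\{p-2,-1-p\}$, and $\beta(1,p)=\min\{\beta_p(U_0),\beta_p(U_1)\}$, a direct computation yields
\[
\beta_p(U_0)-(N-1)p=b_{0,N}(p),\qquad \beta_p(U_1)-(N-1)p=b_{1,N}(p),
\]
so the critical threshold equals $\min\{b_{0,N}(p),b_{1,N}(p)\}$. Intuitively, $U_0$ and $U_1$ are transported by $\Mop^{N-1}$ to the $N$-harmonic functions $(1-|z|^2)^{N-1}$ and $(1-|z|^2)^{N}/|1-z|^{2}$, which saturate the types $b_{0,N}(p)$ and $b_{1,N}(p)$ respectively.

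It then remains to intersect the threshold $\alpha\le\min\{b_{0,N}(p),b_{1,N}(p)\}$ with the admissibility constraint $\alpha>\min_j b_{j,N}(p)$ and identify the resulting region with $\mathcal{E}_N$. I would split on the value of $p$. For $p\ge\tfrac13$, the identity \eqref{eq-beta1infty} gives $\min_j b_{j,N}(p)=\min\{b_{0,N}(p),b_{1,N}(p)\}$, so the two conditions are inconsistent; this matches the fact that $\mathcal{E}_N$ excludes $p\ge\tfrac13$. For $0<p<\tfrac13$, an elementary comparison (using $p<\tfrac12$) yields $b_{1,N}(p)=-1-Np$ and $b_{0,N}(p)=-1-(N-1)p>b_{1,N}(p)$, so that $\min\{b_{0,N}(p),b_{1,N}(p)\}=-1-Np$ and the threshold becomes exactly the defining inequality $\alpha\le-1-Np$ of $\mathcal{E}_N$. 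The only substantial input is Aleksandrov's harmonic result; once it is brought in via \eqref{eq-multspaces2}, the remaining bookkeeping with the piecewise affine functions $b_{j,N}(p)$ is routine and presents no real obstacle.
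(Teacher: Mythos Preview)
Your proof is correct. The reduction via \eqref{eq-multspaces2} to the harmonic case, followed by Aleksandrov's theorem, yields the threshold $\alpha\le\min\{b_{0,N}(p),b_{1,N}(p)\}$; your case split on $p$ then correctly identifies this, intersected with $\mathcal{A}_N$, as exactly $\mathcal{E}_N$.

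Your route is more direct than the paper's. The paper proves this proposition only after developing the full cellular decomposition (Theorem~\ref{thm-entamain2}) and the general triviality criteria for individual terms (Propositions~\ref{prop-8.1} and~\ref{prop-8.2}); it then reads off the answer as the special case $j=N-1$, where the threshold is $a_{1,N}(p)$. You bypass all of that: since $\Lop_0=\Mop\Delta$, the condition $\Lop_{N-j-1}[w_{N-1}]=0$ is automatic for harmonic $w_{N-1}$, so the $j=N-1$ term carries no extra structure beyond ``harmonic times $(1-|z|^2)^{N-1}$'', and \eqref{eq-multspaces2} plus Aleksandrov suffice. The two thresholds agree because $a_{1,N}(p)=\min\{b_{1,N}(p),-1+(1-N)p\}=\min\{b_{1,N}(p),b_{0,N}(p)\}$. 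What the paper's approach buys is uniformity: Propositions~\ref{prop-8.1} and~\ref{prop-8.2} treat all indices $j$ at once and are the engine behind Theorems~\ref{thm-main2} and~\ref{thm-entamain3}. What your approach buys is independence from the later machinery: it uses only the $N=1$ result, already available in Section~2, and would stand even if the cellular decomposition were not yet in hand.
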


In other words, the entangled region describes where the space 
$\mathrm{PH}^p_{N,\alpha}(\D)$ contains no non-trivial functions of the form
$(1-|z|^2)^{N-1}v(z)$ with $v$ harmonic. The principal unentangled cell
has a similar-looking characterization.

\begin{prop}
We have that $(p,\alpha)\in\mathcal{N}_N$ belongs to the principal 
unentangled cell $\mathcal{N}_N^{(1)}$ if and only if
\[
u\in\mathrm{PH}^p_{N,\alpha}(\D)\,\,\,\implies\,\,\,
u=\Mop^{N-1}[v]\,\,\,\text{for some harmonic}\,\,\,v.
\]
\label{prop-snent3.5}
\end{prop}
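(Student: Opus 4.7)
The plan is to derive the proposition from the cellular decomposition (Theorem~\ref{thm-entamain2}) together with an analysis of the individual cells. Given $u\in\mathrm{PH}^p_{N,\alpha}(\D)$, write $u=w_0+\Mop[w_1]+\cdots+\Mop^{N-1}[w_{N-1}]$ as provided by Theorem~\ref{thm-entamain2}, where $w_j$ is $(N-j)$-harmonic, satisfies $\Lop_{N-j-1}[w_j]=0$, and lies in $L^p_{\alpha+jp}(\D)$ by \eqref{eq-multspaces2}. By the uniqueness of this decomposition, $u=\Mop^{N-1}[v]$ for some harmonic $v$ is equivalent to $w_j=0$ for every $j\in\{0,\dots,N-2\}$ (with $v=w_{N-1}$). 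Consequently, Proposition~\ref{prop-snent3.5} reduces to the assertion that $(p,\alpha)\in\mathcal{N}_N^{(1)}$ if and only if, for every $j\in\{0,\dots,N-2\}$, the subspace
$$\mathcal{V}_m^{p,\gamma}:=\bigl\{w\in L^p_\gamma(\D):w\ \text{is}\ m\text{-harmonic and}\ \Lop_{m-1}[w]=0\bigr\}$$
is trivial at $m=N-j$ and $\gamma=\alpha+jp$.

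The analytic heart of the proof is the computation of the critical type $\beta_m(p):=\inf\{\beta_p(w):w\in\mathcal{V}_m^{p,\cdot}\setminus\{0\}\}$. Substituting the alternative Almansi representation $w=\sum_{k=0}^{m-1}(1-|z|^2)^k u_k$ with harmonic $u_k$ into $\Lop_{m-1}[w]=0$ and using \eqref{eq-LN-0}, one shows that $u_1,\ldots,u_{m-1}$ are determined from a free harmonic $u_0=v_0$ by an explicit recursion in $r\partial_r$, so that $\mathcal{V}_m^{p,\cdot}$ is parametrized by a single harmonic $v_0$. Two extremal choices of $v_0$ drive $\beta_m$: bounded $v_0$ gives bounded $w$, hence $\beta_p(w)=-1$; and Poisson kernels $v_0=P(\cdot,\zeta_0)$, for which the boundary identity $v_0|_{\Te\setminus\{\zeta_0\}}=0$ propagates through the recursion to force an $m$-th order cancellation of the Almansi expansion of $w$ on $\Te\setminus\{\zeta_0\}$, together with a local singularity of order $h^{2m-1}/|1-z|^{2m}$ at $\zeta_0$. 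A direct computation using Lemma~\ref{lem-int} (applied separately to the near-$\zeta_0$ and away contributions) yields $\beta_p(w)=\max\{p-2,-1-mp\}$ for such Poisson-like $w$. A rigidity step---lifting the harmonic extremality of Aleksandrov's theorem (the case $N=1$ of Theorem~\ref{thm-main2}) through the cellular recursion---shows these two classes are jointly extremal, producing
$$\beta_m(p)=\min\bigl\{-1,\,\max\{p-2,\,-1-mp\}\bigr\}.$$

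For $p>1/3$ and $m\ge 2$, the right-hand side simplifies to $\min\{-1,p-2\}$, independent of $m$. The joint triviality conditions $\alpha+jp\le\beta_{N-j}(p)$ for $j=0,\dots,N-2$ then all read $\alpha+jp\le\min\{-1,p-2\}$; the binding instance is $j=N-2$, giving $\alpha\le\min\{-1+(2-N)p,\,-2+(3-N)p\}$, which is precisely the defining inequality of $\mathcal{N}_N^{(1)}$. Both implications of the proposition follow at once.

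The principal obstacle is the computation of $\beta_m(p)$: tracking the $m$-th order boundary cancellation in the Almansi expansion of the Poisson-like $w$ is a delicate combinatorial/asymptotic argument (verifiable by hand for $m=2,3$), and the rigidity statement---that no harmonic $v_0$ outside the two extremal classes produces a smaller $\beta_p(w)$---requires extending harmonic extremality to the cellular setting through the recursion that defines the $u_k$.
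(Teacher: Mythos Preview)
Your reduction via the cellular decomposition is exactly the paper's starting point: both proofs observe that the question is precisely when all the pieces $w_j$ with $j\le N-2$ are forced to vanish. From there, however, you diverge.

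The paper does not compute your $\beta_m(p)$ by parametrizing $\mathcal{V}_m$ through a harmonic seed $v_0$ and analyzing extremal choices. Instead it invokes Propositions~\ref{prop-8.1} and~\ref{prop-8.2}, which give the sharp triviality/non-triviality threshold $a_{N-j,N}(p)$ for each cell directly. The key device in the proof of Proposition~\ref{prop-8.1} is the \emph{divisibility criterion} (Proposition~\ref{prop-5.5}): if $\alpha'\le\min\{p-2,-1\}$ then any $w\in\mathrm{PH}^p_{N',\alpha'}(\D)$ factors as $w=\Mop[\tilde w]$ with $\tilde w$ being $(N'-1)$-harmonic. One then writes the cellular decomposition of $\tilde w$ and compares it with the trivial decomposition $w=w$ (a single term with $\Lop_{N'-1}[w]=0$); uniqueness of the cellular decomposition forces $w=0$. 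This bypasses entirely the ``rigidity step'' you flag as the principal obstacle---there is no need to lift Aleksandrov's extremality through the recursion for the $u_k$.

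Two smaller points. First, your computation of $\beta_p$ for the Poisson-like $w$ is off: the function is $U_{m,m}(z)=(1-|z|^2)^{2m-1}/|1-z|^{2m}$, and Lemma~\ref{lem-int} gives $\beta_p(U_{m,m})=\max\{p-2,\,-1-(2m-1)p\}$, not $\max\{p-2,\,-1-mp\}$. This does not affect your conclusion on the range $p>\tfrac13$, where both expressions collapse to $p-2$, but your stated formula for $\beta_m(p)$ is incorrect for small $p$. Second, you only treat $p>\tfrac13$ explicitly; the ``only if'' direction also requires showing that for $(p,\alpha)\in\mathcal{N}_N$ with $p\le\tfrac13$ some $\mathcal{V}_{N-j}^{p,\alpha+jp}$ is nontrivial. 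This does follow from the explicit functions (take $j=N-2$ and use $\alpha>-1-Np$), but it should be said.
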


Recent work of Olofsson \cite{Olof2} shows the following. 
We assume that $\theta$ is a nonnegative integer for technical reasons; 
the result may well be true for general real $\theta>-\frac12$.

\begin{prop}
$(0<p<+\infty)$ Fix a nonnegative integer $\theta$ and a real parameter 
$\alpha$. 
For a function $v\in L^p_\alpha(\D)$, the following are equivalent: 

\noindent{\rm(i)} The function $v$ is continuous in $\D$ and solves 
$\Lop_\theta[v]=0$ in $\D$ in the sense of distributions, and

\noindent{\rm(ii)} The function $v$ is of the form 
\[
v(z)=\frac{1}{2\pi}\int_{\Te}
\frac{(1-|z|^2)^{2\theta+1}}{|1-z\bar\xi|^{2\theta+2}}f(\xi)\diff s(\xi),
\qquad z\in\D,
\]
for some distribution $f$ on $\Te$, where the integral is understood in 
the sense of distribution theory. 
\label{prop-olof1}
\end{prop}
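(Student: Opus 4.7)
The plan is to exploit rotational symmetry: $\Lop_\theta$ commutes with rotations $v\mapsto v(\e^{\imag\tau}\,\cdot\,)$ and hence is diagonal in the angular Fourier basis. Writing $v(r\e^{\imag\phi})=\sum_{n\in\mathbb{Z}}c_n(r)\e^{\imag n\phi}$, the equation $\Lop_\theta[v]=0$ becomes the family of ODEs
\begin{equation*}
(1-r^2)\Bigl[c_n''(r)+\tfrac{1}{r}c_n'(r)-\tfrac{n^2}{r^2}c_n(r)\Bigr]+4\theta r c_n'(r)-4\theta^2 c_n(r)=0,\qquad n\in\mathbb{Z}.
\end{equation*}
Substituting $c_n(r)=r^{|n|}g_n(r^2)$ and $t=r^2$ transforms this into the Gauss hypergeometric equation with parameters $(a,b,c)=(-\theta,|n|-\theta,|n|+1)$, whose two-dimensional solution space contains a unique (up to scalar) solution regular at $r=0$, namely $h_n(r)=r^{|n|}\,{}_2F_1(-\theta,|n|-\theta;|n|+1;r^2)$; the other behaves like $r^{-|n|}$. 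Crucially, since $\theta\in\{0,1,2,\ldots\}$, the series ${}_2F_1$ terminates and $h_n$ is an explicit polynomial in $r^2$ of degree $\theta$, so all asymptotics are transparent.

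For (ii)$\Rightarrow$(i), reduce to $\xi=1$ by rotational invariance and expand
\begin{equation*}
P_\theta(r\e^{\imag\phi},1)=\frac{(1-r^2)^{2\theta+1}}{|1-r\e^{\imag\phi}|^{2\theta+2}}=\sum_{n\in\mathbb{Z}}a_n(r)\e^{\imag n\phi}
\end{equation*}
using the binomial series of $|1-z|^{-2\theta-2}$ in Pochhammer symbols (cf.\ the proof of Lemma \ref{lem-int}). Direct substitution verifies each $a_n$ is a scalar multiple of $h_n$, so $\Lop_\theta[P_\theta(\cdot,1)]=0$ on $\D$; joint smoothness of $P_\theta$ on $\D\times\Te$ lets $\Lop_\theta$ (acting in $z$) commute with the distributional pairing against $f$ in $\xi$, yielding both continuity and the PDE for $v$.

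For (i)$\Rightarrow$(ii), interior ellipticity of $\Lop_\theta$ upgrades the a priori distributional solution $v$ to a smooth function on $\D$. Its Fourier coefficients $c_n(r)$ solve the ODE above, and continuity at $r=0$ rules out the non-regular solution, forcing $c_n(r)=\hat f(n)\,a_n(r)$ for a scalar $\hat f(n)$. Define $f$ formally by its Fourier coefficients $\{\hat f(n)\}_{n\in\mathbb{Z}}$; for $f$ to be a distribution on $\Te$, these must grow at most polynomially in $|n|$. For $p\ge 2$ this follows from Parseval on each circle $|z|=r$ integrated against $(1-r^2)^\alpha r\,\diff r$, combined with a lower bound $\int_0^1|a_n(r)|^2(1-r^2)^\alpha r\,\diff r\gtrsim|n|^{-M}$ obtained from the explicit polynomial form of $h_n$. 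For $0<p<2$, one replaces Parseval by a dyadic decomposition of $\D$ together with a mean-value-type inequality for $\Lop_\theta$-harmonic functions, in the spirit of the Hardy--Littlewood ellipticity invoked in Section \ref{sec-4}.

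The main obstacle is precisely the last step: extracting uniform-in-$n$ polynomial growth of $\hat f(n)$ from the $L^p_\alpha$ hypothesis on $v$, which requires sharp weighted $L^q$ bounds on $a_n(r)$ at both endpoints $r\to 0^+$ and $r\to 1^-$. The integrality of $\theta$ is what makes the bookkeeping tractable, since $h_n$ is then a polynomial with explicit leading behavior; for non-integer $\theta>-\tfrac12$, logarithmic corrections from the hypergeometric connection formula at $r=1$ enter and the analysis becomes genuinely more delicate, which is presumably the technical reason for restricting the statement to integer $\theta$.
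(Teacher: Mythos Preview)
Your approach is genuinely different from the paper's and more self-contained. The paper does not redo the Fourier/hypergeometric analysis; instead it takes Proposition \ref{prop-olof2} (Olofsson's theorem, which is your (i)$\Leftrightarrow$(ii) under the extra hypothesis that the dilates $v_r$ converge distributionally as $r\to1^-$) as a black box, and reduces everything to showing that $v\in L^p_\alpha(\D)$ forces this convergence. The lever is the factorization $\Lop_0\Lop_1\cdots\Lop_{n-1}=\Mop^n\Delta^n$ of Proposition \ref{prop-basicoe1}: with $\theta=n-1$ a nonnegative integer, $\Lop_{n-1}[v]=0$ implies $\Delta^n v=0$, so $v$ is $n$-harmonic. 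Then the Hardy--Littlewood machinery of Section \ref{sec-4} (iterate Corollary \ref{cor-5}, then apply the pointwise bound of Corollary \ref{cor-2.6}) yields $|u_j(z)|=\mathrm{O}((1-|z|)^{-\Lambda})$ for each Almansi component $u_j$, and standard distribution theory gives the convergence of dilates. Your route has the virtue of explaining structurally \emph{why} integer $\theta$ is special (the ${}_2F_1$ terminates), whereas the paper's route explains it via the algebraic factorization and the link to polyharmonicity.

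The soft spot in your argument is exactly where you flag it: the polynomial growth of $\hat f(n)$ for $0<p<2$. Your sketch (``dyadic decomposition plus a mean-value inequality for $\Lop_\theta$-harmonic functions'') is not yet a proof, and the mean-value inequality you would invoke is essentially Lemma \ref{thm-1} for $n$-harmonic functions --- so you are implicitly relying on polyharmonicity anyway, just one layer deeper. A cleaner finish along your own lines: having shown $c_n(r)=\hat f(n)\,r^{|n|}\cdot(\text{polynomial of degree }\theta\text{ in }r^2)$, observe directly that $v$ is $(\theta+1)$-harmonic, apply Corollary \ref{cor-2.6} to get $|v(z)|\le C(1-|z|)^{-\Lambda}$ on all of $\D$, and then read off $|\hat f(n)|\lesssim(1+|n|)^{\Lambda'}$ by evaluating $c_n$ on a circle of radius $1-1/|n|$ together with the explicit lower bound on $a_n$ there. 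This closes the gap and, at the last step, converges with the paper's argument.
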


Proposition \ref{prop-olof1} tells us that the functions $w_j$ of Theorem
\ref{thm-entamain2} may expressed as Poisson-type integrals of their 
distributional boundary values (compare with the remark below). 

\begin{rem}
(a)
In the context of Proposition \ref{prop-olof1}, the boundary distribution $f$
equals $c(\theta)$ times the limit of of the dilates $v_r(\zeta)=v(r\zeta)$,
where $\zeta\in\Te$, as $r\to1^-$. Here, $c(\theta)$ is the constant
$$
c(\theta):=\frac{\Gamma(1+\theta)^2}{\Gamma(1+2\theta)}.
$$

(b) The above theorem of Olofsson should be compared with the work of Huber 
\cite{Hub}. 
\end{rem} 

\subsection{The cells of the admissible region: entangled and
unentangled}
To properly analyze all the cells of the admissible region, we should first 
introduce a modification of the functions $b_{j,N}(p)$ given by \eqref{eq-bj}. 
So, we write, for $j=1,\ldots,N$,  
\begin{equation}
a_{j,N}(p):=\min\big\{b_{j,N}(p),-1+(j-N)p\big\}
=
\begin{cases}-1-(j+N-1)p,\qquad 0<p<1/(2j),\\
-2+(j-N+1)p,\qquad 1/(2j)\le p<1,\\
-1+(j-N)p,\qquad p\ge 1,
\end{cases}
\label{eq-aj}
\end{equation}
and observe that $a_{j,N}(p)=b_{j,N}(p)$ for $0<p\le1$, while $a_{j,N}(p)=
-1+(j-N)p$ for $p\ge1$. We check that the graph of $p\mapsto a_{j,N}(p)$ is 
continuous and piecewise affine. 
The analogue of \eqref{eq-brel} reads as follows:
\begin{equation}
a_{j,N+1}(p)+p=a_{j,N}(p),\qquad j=1,\ldots,N.
\label{eq-arel}
\end{equation}
If we draw all the curves $\alpha=a_{j,N}(p)$ 
within the admissible region $\mathcal{A}_N$, they slice up the region into 
pieces we call {\em cells}. 
To make this precise, we proceed as follows. 
For a point $(p,\alpha)\in\mathcal{A}_N$, we put
\[
J(p,\alpha):=\big\{j\in\{0,\cdots,N-1\}:\,\,\alpha>a_{N-j,N}(p)\big\},
\]
which defines a function from $\mathcal{A}_N$ to the collection of all
subsets of $\{0,\cdots,N-1\}$.
The 
{\em admissible cells} are the level sets (i.e., the sets of constancy) for 
this function $J$. 
As mentioned above, the admissible cells contained in the entangled region 
are called {\em entangled cells}, while those contained in the unentangled 
region are called {\em unentangled cells}; all admissible cells belong to one 
of these categories. 
We draw the cell decomposition for $N=2,3$ in 
Figures \ref{fig-1} and \ref{fig-2}.

We can now improve upon Theorem \ref{thm-entamain2}. 

\begin{thm}
Suppose $(p,\alpha)\in\mathcal{A}_N$. 
Then every $u\in\mathrm{PH}^p_{N,\alpha}(\D)$ has a unique decomposition
\[
u=\sum_{j\in J(p,\alpha)}\Mop^j[w_j],
\]
where each term $\Mop^j[w_j]$ is in $\mathrm{PH}^p_{N,\alpha}(\D)$, while 
the functions $w_j$ are $(N-j)$-harmonic and solve the partial differential
equation $\Lop_{N-j-1}[w_j]=0$ on $\D$, for $j\in J(p,\alpha)$. 
\label{thm-entamain3}
\end{thm}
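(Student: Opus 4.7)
By Theorem \ref{thm-entamain2}, every $u\in\mathrm{PH}^p_{N,\alpha}(\D)$ admits a cellular decomposition $u=\sum_{j=0}^{N-1}\Mop^j[w_j]$ with each $\Mop^j[w_j]\in\mathrm{PH}^p_{N,\alpha}(\D)$ and each $w_j$ an $(N-j)$-harmonic solution of $\Lop_{N-j-1}[w_j]=0$; uniqueness of the decomposition is inherited from that theorem. To obtain the sharper assertion of Theorem \ref{thm-entamain3} it suffices to show that $w_j=0$ whenever $j\notin J(p,\alpha)$, i.e.\ whenever $\alpha\le a_{N-j,N}(p)$. By the equivalence \eqref{eq-multspaces1}, $\Mop^j[w_j]\in\mathrm{PH}^p_{N,\alpha}(\D)$ is the same as $w_j\in L^p_{\alpha+jp}(\D)$, so the entire question reduces to computing, for each nonnegative integer $\theta=N-j-1$, the critical $L^p$-type
\[
\beta^{(\theta)}(p):=\inf\{\beta_p(v):\,\Lop_\theta v=0,\,v\not\equiv0\},
\]
and verifying that $\beta^{(\theta)}(p)=\min\{-1,\max\{-1-(2\theta+1)p,-2+p\}\}$. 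A three-branch case check using \eqref{eq-aj} shows that the shift $\beta^{(\theta)}(p)-jp$ reproduces exactly $a_{N-j,N}(p)$, which is the threshold we need.

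The upper bound on $\beta^{(\theta)}(p)$ comes from two explicit extremal solutions of $\Lop_\theta v=0$ provided by Proposition \ref{prop-olof1}. The Poisson-type kernel $K_\theta(z)=(1-|z|^2)^{2\theta+1}/|1-z|^{2\theta+2}$ (boundary Dirac mass at $\xi=1$) has $L^p$-type $\max\{-1-(2\theta+1)p,-2+p\}$, read off directly from Lemma \ref{lem-int} with $a=(2\theta+1)p+\beta$ and $b=(\theta+1)p$. The Poisson-type integral of the constant boundary data $1$ equals, after a Fourier coefficient identity, the radial function $(1-|z|^2)^{2\theta+1}\,{}_2F_1(\theta+1,\theta+1;1;|z|^2)$; by the standard asymptotic ${}_2F_1(\theta+1,\theta+1;1;x)\sim C(1-x)^{-2\theta-1}$ as $x\to 1^-$, this solution is bounded and nonvanishing, hence of $L^p$-type $-1$. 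Taking the minimum of the two $L^p$-types gives the claimed upper bound.

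The matching lower bound is the main obstacle. Theorem \ref{thm-main2} applied to $w_j\in\mathrm{PH}^p_{N-j,\alpha+jp}(\D)$ only forces $w_j=0$ for $\alpha+jp\le\beta(N-j,p)$, which is strictly weaker than the threshold $\alpha+jp\le\beta^{(\theta)}(p)$ we need, precisely because $\Lop_\theta w_j=0$ is a strictly stronger constraint than $\Delta^{N-j}w_j=0$. To exploit this extra PDE structure, I would write $v$ via Proposition \ref{prop-olof1} as a Poisson-type integral of a boundary distribution $f$, expand $f=\sum_n\hat f(n)e^{in\phi}$, and use rotation-invariance of the kernel to obtain $v(re^{i\varphi})=\sum_n\hat f(n)k_n(r)e^{in\varphi}$ with each $k_n(r)$ an explicit ${}_2F_1$ factor. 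By comparing weighted $L^p$-norms of $v$ against those of its individual modes --- directly for $p\ge 1$ via Minkowski, and for $0<p<1$ via a subharmonic-mean-value argument paired with the Hardy-Littlewood ellipticity to be developed in Section \ref{sec-4} --- the hypothesis $\beta_p(v)<\beta^{(\theta)}(p)$ forces every $\hat f(n)=0$, so $v\equiv0$. The delicate point is the regime $0<p<1$: individual Fourier modes are bounded while their sum can exhibit much worse boundary behavior, and the analysis must be calibrated to separate the branch $-1-(2\theta+1)p$ (carried by the corner singularity of the Dirac-mass Poisson kernel) from the branch $-2+p$ (carried by the cumulative mass of all modes). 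Once $\beta^{(\theta)}(p)$ is established, the converse direction of Theorem \ref{thm-entamain3} --- the existence of nontrivial $\Mop^j[w_j]\in\mathrm{PH}^p_{N,\alpha}(\D)$ for each $j\in J(p,\alpha)$ --- follows at once by taking $w_j$ to be $K_{N-j-1}$ or the constant-boundary integral, whichever realizes $a_{N-j,N}(p)$ in the relevant branch.
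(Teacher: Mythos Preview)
Your reduction of the theorem to computing the critical $L^p$-type $\beta^{(\theta)}(p)$ for solutions of $\Lop_\theta v=0$, and your upper bound via the two explicit extremals (Dirac mass and constant boundary data), are correct and coincide with the paper's Propositions \ref{prop-8.1} and \ref{prop-8.2}. The gap is in your lower bound. The Fourier-mode argument you sketch is not carried out, and for $0<p<1$ it is not clear it can be: individual modes $k_n(r)e^{in\varphi}$ are not controlled by the weighted $L^p$-norm of $v$ through any subharmonic-mean-value mechanism (neither $|v|^p$ nor the modes themselves are subharmonic in general), and the phrase ``paired with the Hardy-Littlewood ellipticity'' does not name a concrete inequality that would extract a single Fourier coefficient from a small weighted $L^p$-norm. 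You have correctly identified the obstacle but not overcome it.

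The paper's proof of Proposition \ref{prop-8.1} avoids Fourier analysis entirely and is much shorter. After reducing to $w\in\mathrm{PH}^p_{N',\alpha'}(\D)$ with $\Lop_{N'-1}[w]=0$ and $\alpha'\le a_{N',N'}(p)$, it splits into two regimes. For $0<p\le 1/(2N')$ one has $a_{N',N'}(p)=-1-(2N'-1)p$, and the blanket triviality criterion of Proposition \ref{prop-5.6} already kills every $N'$-harmonic function in that space, so $w=0$ without using the equation $\Lop_{N'-1}[w]=0$ at all. For $p>1/(2N')$ one has $a_{N',N'}(p)=\min\{p-2,-1\}$, so the divisibility criterion of Proposition \ref{prop-5.5} applies and gives $w=\Mop[\tilde w]$ with $\tilde w$ an $(N'-1)$-harmonic function. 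Now expand $\tilde w$ cellularly via Theorem \ref{thm-entamain2} and multiply through by $\Mop$: this produces a cellular decomposition of $w$ (as an $N'$-harmonic function) whose $j=0$ slot is empty. But $w$, being a solution of $\Lop_{N'-1}[w]=0$, has a cellular decomposition consisting of the $j=0$ slot alone. The uniqueness assertion of Theorem \ref{thm-entamain2} then forces $w=0$. This bootstrap --- using the uniqueness of the very decomposition you are refining --- is the idea you are missing.
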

 
\begin{rem}
{\rm(i)}
The main improvement in Theorem \ref{thm-entamain3} is that we may now specify 
which terms of the cellular decomposition must necessarily vanish.
The theorem is sharp, in the sense that each term $\Mop^j[w_j]$ with $j\in
J(p,\alpha)$ is allowed to be nontrivial. 
This is easy to see easily by considering Dirac point masses $f$ or uniform 
density (constant) $f=1$  in Proposition \ref{prop-olof1} (compare with 
Lemma \ref{lem-6.2} below). For details, see Subsection \ref{subsec-triv}. 

\noindent{(ii)} The cellular decomposition can be likened to the decomposition
of a vector with respect to a given basis. Theorem \ref{thm-entamain3} tells
us which pieces of the ``basis'' are being used in a given cell.
The number of elements of the set $J(p,\alpha)\subset\{0,\ldots,N-1\}$ 
is like the ``dimension'' of the subspace spanned by the given vectors. 
Perhaps ``degrees of freedom'' would be a better term.   

\noindent{(iii)} \emph{An example of entanglement.} Let us try to explain
what Theorem \ref{thm-entamain3} says in a simple case.
Let $u$ be a 
biharmonic function in $\D$, so that $N=2$. By the alternative Almansi 
representation \eqref{eq-Almansi2}, $u$ has a unique representation of the form
\[
u(z)=v_0(z)+\Mop[v_1](z)=v_0(z)+(1-|z|^2)v_1(z),
\]
where $v_0,v_1$ are harmonic. 
Suppose that $u\in\mathrm{PH}^p_{2,\alpha}(\D)$, where
\[
0<p<\tfrac 13,\quad\text{and}\quad-1-3p<\alpha\le -1-2p,
\] 
so that we are in the entangled region (see Figure \ref{fig-1}). 
Theorem \ref{thm-entamain3} then says that $\Lop_1[u]=0$, that is,
\[
0=\Lop_1[u]=\Lop_1[v_0]+\Lop_1\Mop[v_1]=
\Lop_1[v_0]+\Mop\Lop_0[v_1]-8v_1
=4(z\partial_z v_0+\bar z\bar\partial_z v_0)-4v_0-8v_1,
\]
which is equivalent to
\begin{equation}
v_1=\frac12(z\partial_z v_0+\bar z\bar\partial_z v_0)-\frac12 v_0.
\label{xdc}
\end{equation}
In the above calculation, we used the operator identity \eqref{eq-operiden1}.
We conclude that $v_1$ is completely determined by $v_0$. 
As the harmonic function $v_0$ is given in terms of its (distributional) 
boundary ``values'' on $\Te$, the relation \eqref{xdc} expresses how 
$u=v_0+\Mop[v_1]$ is given in terms of its distributional boundary ``values'',
which coincide with those of $v_0$. This is of course in perfect agreement with
the Poisson-type representation of Olofsson (see Proposition \ref{prop-olof1}).
For general $N$, related explicit relations hold in every entangled cell; 
these relations involve a certain finite collection of ``free'' harmonic 
functions (like $v_0$ in \eqref{xdc}). 
\end{rem}

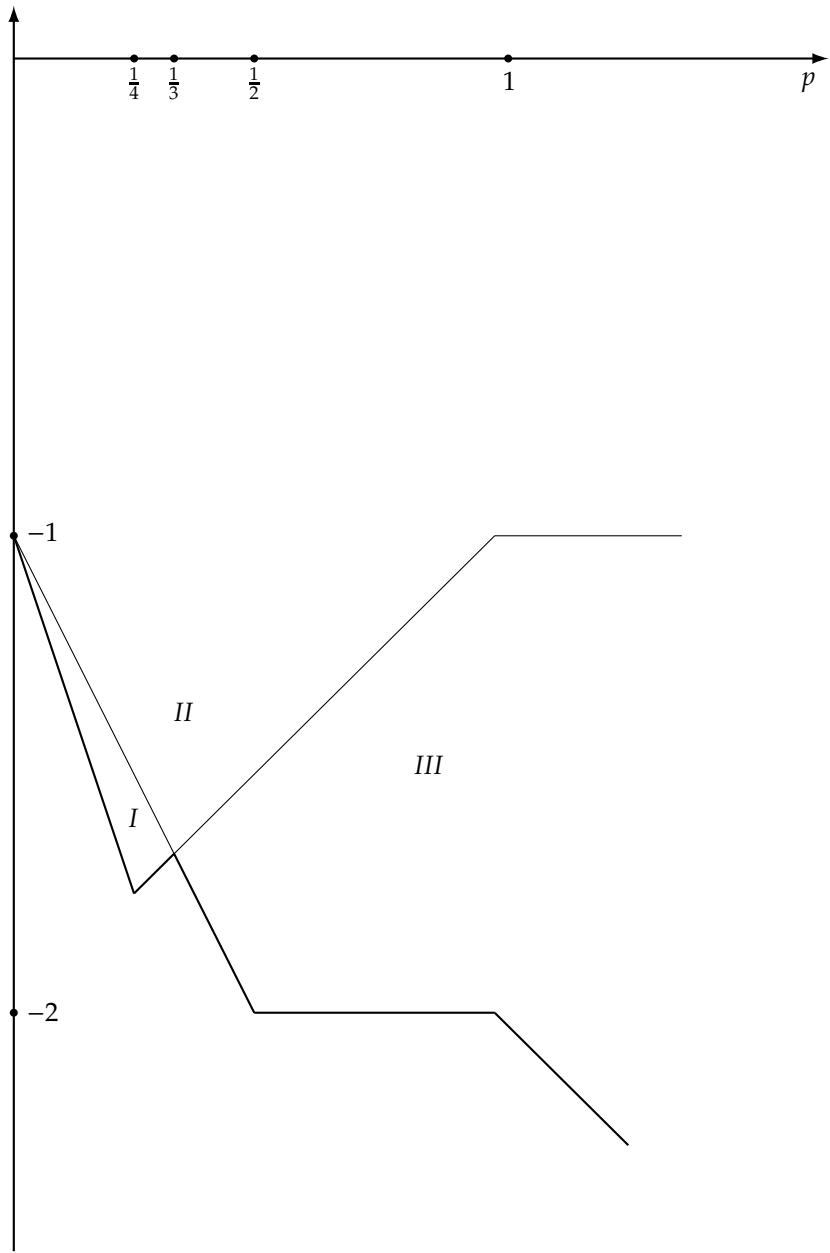
\begin{figure}[p]
\begin{picture}(320,525)
\thicklines
\setlength{\labelwidth}{2em}
\put(0,500){\vector(1,0){305}}
\put(0,50){\vector(0,1){470}}
\put(295,490){$p$}

\put(185,500){\circle*{3}}
\put(183,488){$1$}
\put(90,500){\circle*{3}}
\put(87,488){$\frac12$}
\put(60,500){\circle*{3}}
\put(57,488){$\frac13$}
\put(45,500){\circle*{3}}
\put(42,488){$\frac14$}
\put(0,320){\circle*{3}}
\put(5,318){$-1$}
\put(0,140){\circle*{3}}
\put(5,137){$-2$}
\thicklines
\put(0,320){\line(1,-3){45}}
\put(45,185){\line(1,1){15}}
\put(60,200){\line(1,-2){30}}
\put(90,140){\line(1,0){90}}
\put(180,140){\line(1,-1){50}}
\thinlines
\put(0,320){\line(1,-2){60}}
\put(60,200){\line(1,1){120}}
\put(180,320){\line(1,0){70}}
\put(43,210){$I$}
\put(60,250){$II$}
\put(150,230){$III$}
\end{picture}
\caption{The graph of $\beta(2,p)$ (bottom ``sawtooth'' curve, in thick style), 
plus indication of admissible cells. Here, $I$ is an entangled cell, 
while $II$ and $III$ are unentangled cells ($III$ is principal).}
\label{fig-1}
\end{figure}

\begin{figure}[p]
\begin{picture}(320,565)
\linethickness{2pt}
\thicklines
\put(0,540){\vector(1,0){305}}
\put(0,10){\vector(0,1){550}}
\put(295,530){$p$}

\put(160,540){\circle*{3}}
\put(157,528){$1$}
\put(80,540){\circle*{3}}
\put(77,528){$\frac12$}
\put(53,540){\circle*{3}}
\put(50,528){$\frac13$}
\put(40,540){\circle*{3}}
\put(37,528){$\frac14$}
\put(32,540){\circle*{3}}
\put(29,528){$\frac15$}
\put(26,540){\circle*{3}}
\put(23,528){$\frac16$}
\put(0,380){\circle*{3}}
\put(5,377){$-1$}
\put(0,220){\circle*{3}}
\put(5,217){$-2$}
\put(0,60){\circle*{3}}
\put(5,57){$-3$}


\put(0,380){\line(1,-5){26.5}}
\put(26,247){\line(1,1){5}}
\put(32,252){\line(1,-4){8}}
\put(40,220){\line(1,0){13}}
\put(53,220){\line(1,-3){27}}
\put(80,140){\line(1,-1){80}}
\put(160,60){\line(1,-2){20}}

\thinlines
\put(0,380){\line(1,-3){60}}
\put(0,380){\line(1,-4){32}}
\put(31,252){\line(1,1){128}}
\put(160,380){\line(1,0){100}}
\put(53,220){\line(1,0){107}}
\put(160,220){\line(1,-1){70}}
\put(22,270){$I$}
\put(29,265){$II$}
\put(36,237){$III$}
\put(140,180){$IV$}
\put(50,360){$V$}
\put(100,250){$VI$}
\end{picture}
\caption{The graph of $\beta(3,p)$ (bottom ``sawtooth'' curve, in thick style), 
plus indication of admissible cells. Here, $I$, $II$, and $II$ are 
entangled cells, while $IV$, $V$, and $VI$ are unentangled cells 
($IV$ is principal).}
\label{fig-2}
\end{figure}

\section{The polyharmonic Hardy-Littlewood 
ellipticity and its applications}
\label{sec-4}

\subsection{Representation of the extension in terms of the 
Poisson kernel}

If a function 
$f:\D(0,r)\to\C$ is harmonic and extends continuously to the boundary, then
Poisson's formula supplies the representation
\begin{equation}
f(z)=\frac{r^2-|z|^2}{2\pi r}\int_{\Te(0,r)}|z-\zeta|^{-2}f(\zeta)\diff 
s(\zeta),\qquad z\in\D(0,r),
\label{eq-Poisson1}
\end{equation}
where we recall that $\diff s$ stands for arc length measure. 

We suppose that 
$u\in \mathrm{PH}_N(\D)$, i.e. that $u$ is $N$-harmonic in $\D$, and recall 
the notation $\mathbf{E}[u](z,\varrho)$ for the extension
of $u$, given by \eqref{eq-Almansiext}. With $f(z)=\Eop[u](z,\varrho)$
and $r=\varrho$, we obtain from \eqref{eq-Poisson1} that
\begin{equation}
\Eop[u](z,\varrho)=\frac{\varrho^2-|z|^2}{2\pi\varrho}
\int_{\Te(0,\varrho)}|z-\zeta|^{-2}u(\zeta)\diff 
s(\zeta),\qquad z\in\D(0,\varrho),
\label{eq-Poisson2}
\end{equation}
for each $\varrho$ with $0<\varrho<1$, since 
$\Eop[u](\zeta,\varrho)=u(\zeta)$ for $\zeta\in\Te(0,\varrho)$ by 
\eqref{eq-restr1}. In particular, we obtain in an elementary fashion that
\begin{equation}
|\Eop[u](z,\varrho)|\le\frac{1}{2\pi\varrho}\,\frac{\varrho+|z|}{\varrho-|z|}
\int_{\Te(0,\varrho)}|u(\zeta)|\diff s(\zeta),\qquad z\in\D(0,\varrho),
\label{eq-Poisson3}
\end{equation}

\subsection{Lagrangian interpolation of the extension of a 
polyharmonic function}

We suppose that $u\in \mathrm{PH}_N(\D)$, i.e. that $u$ is $N$-harmonic in 
$\D$, and recall the notation $\mathbf{E}[u](z,\varrho)$ for the extension
of $u$, given by \eqref{eq-Almansiext}. 
The polynomial nature of $\varrho\mapsto
\mathbf{E}[u](z,\varrho)$ makes it amenable to Lagrangian interpolation.
Indeed, if we supply real values $\varrho_j$, where $j=1,\ldots,N$, with 
$0<\varrho_1<\varrho_2<\cdots<\varrho_N<1$, then
\begin{equation}
\mathbf{E}[u](z,\varrho)=\sum_{j=1}^N \mathbf{E}[u](z,\varrho_j)\,L_j(\varrho),
\label{eq-Lagint1}
\end{equation}
where $L_j(\varrho)$ is the even interpolation polynomial
\begin{equation}
L_j(\varrho):=\prod_{k:k\ne j}
\frac{\varrho^2-\varrho_k^2}{\varrho_j^2-\varrho_k^2};
\label{eq-Lagintpol1}
\end{equation}
here, it is tacitly assumed that the product runs over the set of integers
$k\in \{1,\ldots,N\}$ (with the exception of $j$).  
In particular, with $\varrho=|z|$, \eqref{eq-restr1} and \eqref{eq-Lagint1}
together give the representation
\begin{equation}
u(z)=\sum_{j=1}^N \mathbf{E}[u](z,\varrho_j)\,L_j(|z|),\qquad z\in\D. 
\label{eq-Lagint2}
\end{equation} 
If we use both \eqref{eq-Poisson2} and \eqref{eq-Lagint2}, we see that
\begin{equation}
u(z)=\sum_{j=1}^N L_j(|z|)\,\frac{\varrho_j^2-|z|^2}{2\pi \varrho_j}
\int_{\Te(0,\varrho_j)}|z-\zeta|^{-2}u(\zeta)\diff 
s(\zeta),\qquad z\in\D(0,\varrho_1). 
\label{eq-Lagint3}
\end{equation} 
We can think about \eqref{eq-Lagint3} as a polyharmonic analogue of the
Poisson representation. Let us introduce the even polynomial
\begin{equation}
M_j(\varrho):=L_j(\varrho)\,\frac{\varrho_j^2-\varrho^2}{2\varrho_j}
=-\frac{\prod_{k}(\varrho^2-\varrho_k^2)}
{2\varrho_j\prod_{k:k\ne j}(\varrho_j^2-\varrho_k^2)},
\label{eq-Mj}
\end{equation}
which is of degree $2n$ and solves the interpolation problem
\[
M_j(\varrho_k)=0 \quad\text{for all}\,\,\, k,\qquad M_j'(\varrho_j)=1.
\]
In terms of the functions $M_j$, the formula \eqref{eq-Lagint2}
simplifies:
\begin{equation}
u(z)=\frac{1}{\pi}\sum_{j=1}^N M_j(|z|)\int_{\Te(0,\varrho_j)}
|z-\zeta|^{-2}u(\zeta)\diff s(\zeta),\qquad z\in\D(0,\varrho_1). 
\label{eq-Lagint4}
\end{equation} 
We may of course apply the gradient to both sides:
\begin{equation}
\nabla u(z)=\frac{1}{\pi}\sum_{j=1}^N \Bigg\{[\nabla M_j(|z|)]
\int_{\Te(0,\varrho_j)}|z-\zeta|^{-2}u(\zeta)\diff s(\zeta)+
M_j(|z|)\int_{\Te(0,\varrho_j)}
[\nabla_z|z-\zeta|^{-2}]u(\zeta)\diff s(\zeta)\Bigg\},
\label{eq-Lagint5}
\end{equation}
for $z\in\D(0,\varrho_1)$. 
We write $\varrho_1=\vartheta$, and let $\delta$ denote the quantity
\begin{equation}
\delta:=\min_{j,k:j\ne k}|\varrho_j^2-\varrho_k^2|.
\label{eq-defdelta}
\end{equation}
An elementary calculation then gives that
\begin{equation}
|M_j(|z|)|\le \delta^{-N+1}(\vartheta-|z|)\,\,\,\text{and}\,\,\, 
|\nabla M_j(|z|)|\le N\,\delta^{-N+1} \quad\text{for}\quad z\in\D(0,\vartheta),
\label{eq-elest1}
\end{equation}
while another gives that
\begin{equation}
|z-\zeta|^{-2}\le (\vartheta-|z|)^{-2}\,\,\,\text{and}\,\,\,
|\nabla_z|z-\zeta|^{-2}|\le 2(\vartheta-|z|)^{-3}\quad\text{for}\quad
z\in\D(0,\vartheta),\,\,\,\zeta\in \cup_j\Te(0,\varrho_j).
\label{eq-elest2}
\end{equation}
As we implement these estimates into \eqref{eq-Lagint4} and 
\eqref{eq-Lagint5}, we obtain the following estimates.

\begin{lem} 
If $0<\vartheta=\varrho_1<\cdots<\varrho_N<1$, and $\delta$ is given 
by \eqref{eq-defdelta}, then
\begin{equation*}
|u(z)|\le\frac{\delta^{-N+1}}{(\vartheta-|z|)\pi}
\sum_{j=1}^N \int_{\Te(0,\varrho_j)}
|u(\zeta)|\diff s(\zeta),\qquad z\in\D(0,\vartheta),
\end{equation*}
and
\begin{equation*}
|\nabla u(z)|\le\frac{(N+2)\delta^{-N+1}}{(\vartheta-|z|)^2\pi}
\sum_{j=1}^N \int_{\Te(0,\varrho_j)}
|u(\zeta)|\diff s(\zeta),\qquad z\in\D(0,\vartheta).
\end{equation*}
\label{lem-2.1}
\end{lem}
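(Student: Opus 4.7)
The plan is to simply substitute the elementary estimates \eqref{eq-elest1} and \eqref{eq-elest2} into the integral representations \eqref{eq-Lagint4} and \eqref{eq-Lagint5} and invoke the triangle inequality. All the hard work has already been done: the Lagrangian interpolation of the Almansi extension has produced the explicit formulas \eqref{eq-Lagint4}--\eqref{eq-Lagint5}, and the size bounds on $M_j$, $\nabla M_j$, $|z-\zeta|^{-2}$, and $\nabla_z|z-\zeta|^{-2}$ are uniform over $z \in \D(0,\vartheta)$ and $\zeta \in \cup_j \Te(0,\varrho_j)$, the key point being that $|\zeta| = \varrho_j \ge \vartheta > |z|$ so $|z-\zeta| \ge \vartheta - |z|$.

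For the first inequality, I would apply the triangle inequality inside \eqref{eq-Lagint4} to get
\[
|u(z)| \le \frac{1}{\pi}\sum_{j=1}^N |M_j(|z|)| \int_{\Te(0,\varrho_j)} |z-\zeta|^{-2}\,|u(\zeta)|\,\ds(\zeta),
\]
then substitute the bound $|M_j(|z|)| \le \delta^{-N+1}(\vartheta-|z|)$ from \eqref{eq-elest1} together with $|z-\zeta|^{-2} \le (\vartheta-|z|)^{-2}$ from \eqref{eq-elest2}. The factor $(\vartheta-|z|) \cdot (\vartheta-|z|)^{-2} = (\vartheta-|z|)^{-1}$ produces the required estimate.

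For the gradient estimate, I would apply the triangle inequality to \eqref{eq-Lagint5}, splitting into two pieces. The first piece is bounded using $|\nabla M_j(|z|)| \le N\delta^{-N+1}$ and $|z-\zeta|^{-2} \le (\vartheta-|z|)^{-2}$, contributing $N \delta^{-N+1}(\vartheta-|z|)^{-2}$. The second piece is bounded using $|M_j(|z|)| \le \delta^{-N+1}(\vartheta-|z|)$ and $|\nabla_z|z-\zeta|^{-2}| \le 2(\vartheta-|z|)^{-3}$, which together contribute $2\delta^{-N+1}(\vartheta-|z|)^{-2}$. Summing gives the coefficient $(N+2)\delta^{-N+1}$ with the correct power $(\vartheta-|z|)^{-2}$, matching the statement.

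There is no real obstacle here; the lemma is a clean bookkeeping consequence of the preceding setup. The only point to verify carefully is that the estimate $|z-\zeta|^{-2} \le (\vartheta-|z|)^{-2}$ applies uniformly to every $\zeta$ on each circle $\Te(0,\varrho_j)$ with $j \ge 1$, which follows from $|z-\zeta| \ge |\zeta|-|z| = \varrho_j - |z| \ge \vartheta - |z|$; this is where the hypothesis $\vartheta = \varrho_1$ (the smallest radius) enters decisively.
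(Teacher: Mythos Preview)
Your proposal is correct and matches the paper's approach exactly: the paper introduces the lemma with the sentence ``As we implement these estimates into \eqref{eq-Lagint4} and \eqref{eq-Lagint5}, we obtain the following estimates,'' and gives no further proof. Your write-up simply makes explicit the substitution and triangle-inequality bookkeeping that the paper leaves implicit.
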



\subsection{
Hardy-Littlewood
ellipticity}
The next lemma is an elaboration on a theme developed by Hardy and Littlewood 
in 1931, see Theorem 5 of \cite{HL} (see also \cite{Gar}, p. 121). They 
considered $N=1$ (harmonic $u$), and observed first that for $1\le p<+\infty$, 
the function $|u|^p$ is subharmonic, so that
\[
|u(0)|^p\le \frac{1}{\pi}\int_\D|u|^p\diff A.
\]
For $0<p<1$, $|u|^p$ need not be subharmonic. However, Hardy and Littlewood 
found that the above inequality survives nevertheless, if the
right hand side is multiplied by a suitable constant. This is an aspect of
harmonic functions (and of the Laplacian) which we would like to call 
{\em Hardy-Littlewood ellipticity}. 
This fact was generalized to 
harmonic functions in $\mathbb R^n$, with $n>2$, by Fefferman and Stein 
\cite{FeS}. This Hardy-Littlewood ellipticity survives also in the context of 
polyharmonic functions. 

Here is a 1994 result by Pavlovi\'c  \cite{Pa2}, \cite{Pa1} (Lemma 5). 

\begin{lem}
$(0<p<+\infty)$ 
There exists a positive constant $C_1(N,p)$ depending only on $N$ and $p$ 
such that for all $N$-harmonic functions $u$ on the unit disk $\D$ we have
\[
|u(z)|^p\le C_1(N,p)\int_\D|u|^p\diff A,\qquad z\in\D(0,\tfrac12).
\] 
\label{thm-1}
\end{lem}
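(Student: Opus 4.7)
\emph{Reduction.} The plan is to reduce to the single-point inequality $|u(0)|^p\le C(N,p)\int_\D|u|^p\,\diff A$, from which the uniform estimate on $\D(0,\tfrac12)$ follows by applying it to the rescaled $N$-harmonic function $\tilde u(w):=u(z_0+w/2)$ at the origin, for each $z_0\in\D(0,\tfrac12)$. A preliminary dilation $u_\rho(z):=u(\rho z)$ with $\rho\uparrow 1$ lets me further assume $u$ is continuous on $\overline\D$, since $u_\rho(0)=u(0)$ and $\int_\D|u_\rho|^p\,\diff A\le\rho^{-2}\int_\D|u|^p\,\diff A$.

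\emph{Case $p\ge 1$.} Using Lemma \ref{lem-2.1} at $z=0$ with equally spaced radii $\varrho_j:=\tfrac34+(j-1)/(8N)$, for which both $\vartheta$ and $\delta$ are bounded below by constants depending only on $N$, I obtain $|u(0)|\le C(N)\sum_j\int_{\Te(0,\varrho_j)}|u|\,\ds$. Hölder's inequality on each circle converts this into the corresponding $L^p$-bound after raising to the $p$-th power; independently averaging each $\varrho_j$ over a small subinterval $I_j\subset(\tfrac34,\tfrac78)$ of length comparable to $1/N$ then transforms the line integrals into area integrals on disjoint annuli, yielding the conclusion.

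\emph{Case $0<p<1$ (Hardy--Littlewood trick).} Set $M(r):=\sup_{|z|\le r}|u(z)|$ and $\phi(r):=(1-r)^{2/p}M(r)$ on $[0,1]$. Continuity of $u$ on $\overline\D$ makes $\phi$ continuous with $\phi(1)=0$, so $\phi$ attains its maximum at some $r^*\in[0,1)$, with $|u(z^*)|=M(r^*)$ for a point $z^*$ satisfying $|z^*|\le r^*$. I apply the gradient estimate from Lemma \ref{lem-2.1} to the $N$-harmonic rescaling $v(w):=u\bigl(z^*+\tfrac{1-r^*}{2}w\bigr)$ on $\D$ with the same choice of $\varrho_j$'s: on $|w|\le\tfrac12$ it yields $|\nabla v(w)|\le C(N)\sup_{|w'|\le 7/8}|v(w')|$, and by the maximality of $\phi$ the right-hand side is at most $M((9r^*+7)/16)\le (16/9)^{2/p}M(r^*)$. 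Undoing the rescaling produces $|\nabla u(z)|\le C(N,p)M(r^*)/(1-r^*)$ on $\D(z^*,(1-r^*)/4)$, so $|u|\ge M(r^*)/2$ on some disk $\D(z^*,c(N,p)(1-r^*))\subset\D$. Integrating $|u|^p$ over that disk gives
\[
\int_\D|u|^p\,\diff A\ge c'(N,p)\,(1-r^*)^2 M(r^*)^p,
\]
and combining with $|u(0)|^p=\phi(0)^p\le\phi(r^*)^p=(1-r^*)^2 M(r^*)^p$ (from the maximality of $\phi$) finishes the argument.

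\emph{Main obstacle.} The delicate range is $0<p<1$: Lemma \ref{lem-2.1} only controls $|u|$ and $|\nabla u|$ via circular $L^1$-integrals, whereas the target inequality involves an $L^p$-integral with $p<1$ and Hölder is unavailable. The Hardy--Littlewood maximization circumvents this by producing, at the extremal radius $r^*$, a subdisk of size $\sim 1-r^*$ on which $|u|\asymp M(r^*)$; the exponent $2/p$ in $\phi$ is dictated precisely by the balance $p\cdot(2/p)=2$, matching the two-dimensional area that cancels the $(1-r^*)^{-1}$ factor from the gradient bound.
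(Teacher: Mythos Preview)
Your proof is correct and follows essentially the same approach as the paper: both use the Lagrangian-interpolation bound (averaged over radii) combined with H\"older for $p\ge 1$, and the Hardy--Littlewood/Pavlovi\'c maximization trick for $0<p<1$. The only cosmetic difference is in the $0<p<1$ execution---the paper picks a near-maximizer of $(1-|z|^2)^2|u(z)|^p$, applies the already-proved $p=1$ case on the subdisk $\D(w,\rho)$, and splits $|u|=|u|^{1-p}|u|^p$, whereas you use the gradient estimate to locate a subdisk on which $|u|\ge M(r^*)/2$; both routes encode the same balance and yield the same conclusion.
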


\begin{proof} 
Here we follow the elegant argument by Pavlovi\'c. 
Alternatively, one can develop an argument based on a selection of optimal 
radii, which shares some features with our previous work \cite{BorHed} 
(see, e.g., the proof of Theorem 8.4 in \cite{HKZ}). 

In the case $1\le p<+\infty$, the asserted estimate can be obtained in a rather
straight-forward fashion based on the integral representation 
\eqref{eq-Lagint3} (sketch: we let the points $\varrho_j$ vary over short
disjoint subintervals of $[\frac34,1[$, and integrate both sides with respect 
to all the $\varrho_j$ over those short intervals. We then apply H\"older's 
inequality).   

We now focus on the remaining case $0<p<1$. 
We quickly realize that it is 
enough to obtain the asserted estimate for a dilate $u_r$ of $u$, where
$u_r(z)=u(rz)$ and $0<r<1$. This allows us to suppose that $u$ extends to be 
$N$-harmonic in a neighborhood of the closed unit disk. In particular, we
may assume that $u$ is bounded in $\D$.  

We pick a $w\in\D$ such that 
\begin{equation}
2|u(w)|^p(1-|w|^2)^2\ge \max_{z\in\D}[(1-|z|^2)^2|u(z)|^p].
\label{eq-pavl1}
\end{equation}
If we write $\rho:=\frac12(1-|w|)$, we quickly realize that \eqref{eq-pavl1}
leads to
\begin{equation}
\max_{\D(w,\varrho)}|u|^p\le 32|u(w)|^p.
\label{eq-pavl2}
\end{equation}
Then, in view of the already established bound for $p=1$ (see the first portion
of this proof), and \eqref{eq-pavl1}, we have the estimate  
\begin{multline*}
|u(w)|(1-|w|^2)^2\le c_1(N)\int_{\D(w,\rho)}|u|\diff A\le 
c_1(N)\max_{\D(w,\rho)}|u|^{1-p}\int_{\D(w,\rho)}|u|^p\diff A
\\
\le 
32^{(1-p)/p}\,c_1(N)|u(w)|^{1-p}\int_{\D}|u|^p\diff A.
\end{multline*}
Here, $c_1(N)$ is a suitable positive contant which only depends on $N$.
It now follows that
\[
|u(z)|^p\le 4\,(1-|z|^2)^2|u(z)|^p\le
8\,|u(w)|^p(1-|w|^2)^2\le 8(32^{(1-p)/p})\, c_1(N)\int_{\D}|u|^p\diff A,
\qquad z\in\D(0,\tfrac12).
\]
as required. 
\end{proof}

As a consequence, we obtain the following result of Pavlovi\'c:

\begin{cor}
$(0<p<+\infty)$ 
There exists a positive constant $C_2(N,p)$ depending only on $N$ and $p$ 
such that for all $N$-harmonic functions $u$ on the unit disk $\D$ we have
\[
|\nabla u(z)|^p\le C_2(N,p)\int_\D|u|^p\diff A,\qquad z\in\D(0,\tfrac12).
\]
\label{cor-1}
\end{cor}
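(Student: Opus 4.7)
The plan is to combine the gradient integral representation from Lemma~\ref{lem-2.1} with a localized (translation-scaled) form of Lemma~\ref{thm-1}. Fix $z \in \D(0,\tfrac12)$ and choose $N$ radii $\varrho_j := \tfrac34 + (j-1)/(8N)$ for $j=1,\dots,N$; with this choice $\vartheta = \varrho_1 = \tfrac34$ satisfies $\vartheta - |z| \ge \tfrac14$, and the minimal separation $\delta$ in \eqref{eq-defdelta} is bounded below by a positive constant depending only on $N$. The second inequality in Lemma~\ref{lem-2.1} then gives
\[
|\nabla u(z)| \le C(N) \sum_{j=1}^{N} \int_{\Te(0,\varrho_j)} |u(\zeta)| \, \ds(\zeta).
\]

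Next, I upgrade Lemma~\ref{thm-1} from a bound at points of $\D(0,\tfrac12)$ to a uniform pointwise bound at every $\zeta$ with $|\zeta| \le \tfrac78$. For any such $\zeta$, the disk $\D(\zeta,(1-|\zeta|)/2)$ sits inside $\D$, so the function $\tilde u(w) := u(\zeta + (1-|\zeta|)w/2)$ is $N$-harmonic on $\D$; applying Lemma~\ref{thm-1} at the origin to $\tilde u$ and then changing variables back, using that $1 - |\zeta| \ge \tfrac18$ on the relevant circles, I obtain a uniform bound
\[
|u(\zeta)|^p \le C'(N,p) \int_{\D} |u|^p \, \dA, \qquad |\zeta| \le \tfrac78.
\]

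Finally, I substitute the $p$-th root of this pointwise bound into each of the circle integrals in the display above and use $2\pi\varrho_j \le 2\pi$ to conclude that $|\nabla u(z)| \le C''(N,p) \bigl(\int_\D |u|^p \, \dA\bigr)^{1/p}$; raising to the power $p$ produces the stated inequality with $C_2(N,p) := C''(N,p)^p$. The argument is essentially a mechanical assembly of the two previously established tools (the Lagrangian gradient estimate and Hardy--Littlewood ellipticity); the only step that requires any care is the scaling-and-translation that transports Lemma~\ref{thm-1} from the origin to an arbitrary interior point, and this is more a bookkeeping nuisance than a genuine obstacle.
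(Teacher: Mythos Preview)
Your proof is correct and is essentially the intended derivation: the paper states this corollary without proof, as an immediate consequence of the two tools just established---the Lagrangian gradient estimate (Lemma~\ref{lem-2.1}) and the Hardy--Littlewood ellipticity bound (Lemma~\ref{thm-1})---and your argument assembles them in the natural way. The translation-and-scaling step you single out is exactly the content of the subsequent Corollary~\ref{cor-2}, so you have effectively reproved that corollary en route; nothing further is needed.
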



For future use, we formulate the above results in
the setting of a general disk $\D(z_0,r)$. 

\begin{cor}
$(0<p<+\infty)$ 
Suppose $u$ is $N$-harmonic in the disk $\D(z_0,r)$, where $z_0\in\C$ and the
radius $r$ is positive. Then there exist positive constants $C_1(N,p)$
and $C_2(N,p)$ depending only on $N$ and $p$ such that
\[
|u(z)|^p\le \frac{C_1(N,p)}{r^2}\int_{\D(z_0,r)}|u|^p\diff A,\qquad z\in
\D(z_0,\tfrac12 r),
\] 
and
\[
|\nabla u(z)|^p\le \frac{C_2(N,p)}{r^{2+p}}\int_{\D(z_0,r)}|u|^p\diff A,
\qquad z\in\D(z_0,\tfrac12 r).
\]
\label{cor-2}
\end{cor}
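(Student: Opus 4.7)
The plan is to reduce the general disk statement to the unit disk statement in Lemma~\ref{thm-1} and Corollary~\ref{cor-1} by an affine change of variables. Specifically, I would define the rescaled function
\[
v(w):=u(z_0+rw),\qquad w\in\D.
\]
Since the $N$-Laplacian $\Delta^N$ is invariant under translations and is homogeneous under dilations (the chain rule gives $\Delta^N_w v(w)=r^{2N}\Delta^N_z u(z_0+rw)$, which vanishes), $v$ is $N$-harmonic on $\D$.

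Next I would apply Lemma~\ref{thm-1} to $v$ and perform the change of variables $z=z_0+rw$, which yields $\dA(z)=r^2\dA(w)$ and maps $\D(0,\tfrac12)$ onto $\D(z_0,\tfrac12 r)$ and $\D$ onto $\D(z_0,r)$. For $w\in\D(0,\tfrac12)$, writing $z=z_0+rw\in\D(z_0,\tfrac12 r)$,
\[
|u(z)|^p=|v(w)|^p\le C_1(N,p)\int_\D|v(w')|^p\dA(w')
=\frac{C_1(N,p)}{r^2}\int_{\D(z_0,r)}|u|^p\dA,
\]
which is the first claimed inequality.

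For the gradient estimate, I would use the chain rule $\nabla_w v(w)=r\,\nabla_z u(z_0+rw)$, so that $|\nabla v(w)|=r|\nabla u(z)|$. Applying Corollary~\ref{cor-1} to $v$ and performing the same change of variables,
\[
r^p|\nabla u(z)|^p=|\nabla v(w)|^p\le C_2(N,p)\int_\D|v|^p\dA
=\frac{C_2(N,p)}{r^2}\int_{\D(z_0,r)}|u|^p\dA,
\]
and dividing by $r^p$ gives the second inequality with the same constant $C_2(N,p)$. There is essentially no obstacle here beyond careful bookkeeping of the scaling: the factor $r^{-2}$ comes from the area element, and the extra $r^{-p}$ in the gradient bound comes from the chain-rule factor $r$ appearing on the left when $v$ is differentiated. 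The constants $C_1(N,p)$ and $C_2(N,p)$ are unchanged from the unit-disk case.
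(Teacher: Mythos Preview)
Your argument is correct and is precisely the standard affine rescaling that the paper has in mind; indeed, the paper does not even spell out a proof here, presenting the corollary simply as a reformulation of Lemma~\ref{thm-1} and Corollary~\ref{cor-1} ``in the setting of a general disk $\D(z_0,r)$''. Your bookkeeping of the Jacobian factor $r^{-2}$ from the area element and the extra $r^{-p}$ from the chain rule on the gradient is exactly right.
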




Finally, Pavlovi\'c also obtains effective pointwise and integral 
bounds on $u$ and $\nabla u$ given that $u\in \mathrm{PH}^p_{N,\alpha}(\D)$. 

\begin{cor} $(0<p<+\infty)$ Suppose $u$ is $N$-harmonic in $\D$ and that 
$\alpha$ is a real parameter. Then there exist constants 
$C_3(N,p,\alpha)$ and $C_4(N,p,\alpha)$ which depend only on $N,p,\alpha$, 
such that
\[
|u(z_0)|^p\le \frac{C_3(N,p,\alpha)}{(1-|z_0|)^{\alpha+2}}\int_\D|u(z)|^p
(1-|z|^2)^\alpha\diff A(z),\qquad z_0\in\D,
\]
and 
\[
|\nabla u(z_0)|^p\le \frac{C_4(N,p,\alpha)}{(1-|z_0|)^{\alpha+p+2}}
\int_\D|u(z)|^p(1-|z|^2)^\alpha\diff A(z),\qquad z_0\in\D.
\]
\label{cor-2.6}
\end{cor}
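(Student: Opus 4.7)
The plan is to deduce Corollary \ref{cor-2.6} from Corollary \ref{cor-2} by a standard localization: apply the earlier pointwise bound on a disk $\D(z_0,r)$ of radius $r$ comparable to $1-|z_0|$, and then absorb the weight $(1-|z|^2)^\alpha$ by observing that it is essentially constant on such a disk.

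First, set $r := \tfrac12(1-|z_0|)$. Since $|z|\le|z_0|+r=\tfrac12(1+|z_0|)<1$ for $z\in\D(z_0,r)$, the disk $\D(z_0,r)$ is contained in $\D$, and $u$ is $N$-harmonic there. As $z_0$ is the center of $\D(z_0,r)$ and therefore lies in $\D(z_0,r/2)$, Corollary \ref{cor-2} furnishes
\[
|u(z_0)|^p \le \frac{C_1(N,p)}{r^2}\int_{\D(z_0,r)}|u(z)|^p\,\dA(z)
\]
and the analogous bound for $|\nabla u(z_0)|^p$ with $r^2$ replaced by $r^{2+p}$ and $C_1$ replaced by $C_2$.

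Next, I would establish the comparability of the weight on $\D(z_0,r)$. From $|z_0|-r\le |z|\le |z_0|+r$ one deduces $1-|z|\ge\tfrac12(1-|z_0|)$; for the upper bound $1-|z|\le\tfrac32(1-|z_0|)$, one uses the triangle inequality when $|z_0|\ge\tfrac13$ and the trivial estimate $1-|z|\le 1\le\tfrac32(1-|z_0|)$ when $|z_0|<\tfrac13$. Combining with $1+|z|\in[1,2]$ gives $\tfrac12(1-|z_0|)\le 1-|z|^2\le 3(1-|z_0|)$ throughout $\D(z_0,r)$. Raising to the $(-\alpha)$-th power yields a constant $c(\alpha)$ depending only on $\alpha$ with
\[
(1-|z|^2)^{-\alpha}\le c(\alpha)(1-|z_0|)^{-\alpha}\quad\text{on}\,\,\,\D(z_0,r).
\]

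Finally, I would multiply and divide the integrand by $(1-|z|^2)^\alpha$, apply the above bound, extend the integral from $\D(z_0,r)$ to all of $\D$, and substitute $r^2=\tfrac14(1-|z_0|)^2$, obtaining
\[
|u(z_0)|^p \le \frac{4c(\alpha)C_1(N,p)}{(1-|z_0|)^{\alpha+2}}\int_\D |u(z)|^p(1-|z|^2)^\alpha\,\dA(z),
\]
which is the desired bound with $C_3(N,p,\alpha):=4c(\alpha)C_1(N,p)$. The gradient estimate follows by the identical argument, with the extra factor $r^{-p}=2^p(1-|z_0|)^{-p}$ producing the additional $p$ in the denominator exponent and giving $C_4(N,p,\alpha):=2^{p+2}c(\alpha)C_2(N,p)$. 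There is no substantial obstacle here; the only point requiring minor attention is to verify that the comparability constants in the second step are uniform in $z_0\in\D$, so that $c(\alpha)$ depends on $\alpha$ alone and not on $z_0$.
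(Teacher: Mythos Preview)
Your proof is correct and is precisely the standard localization argument one expects here: apply Corollary \ref{cor-2} on the disk $\D(z_0,\tfrac12(1-|z_0|))$ and use that the weight $(1-|z|^2)^\alpha$ is comparable to $(1-|z_0|)^\alpha$ there. The paper in fact omits the proof of this corollary entirely, attributing the result to Pavlovi\'c and treating it as an immediate consequence of the preceding estimates, so your argument fills in exactly the intended gap.
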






\begin{cor} $(0<p<+\infty)$
Suppose $u$ is $N$-harmonic in the unit disk $\D$ and that $\alpha$ is a 
real parameter. Then there exists a constant $C_5(N,p,\alpha)$
which depends only on $N,p,\alpha$, such that 
\[
\int_\D|\nabla u|^p(1-|z|^2)^{\alpha+p}\diff A(z)\le C_5(N,p,\alpha)
\int_\D|u|^p(1-|z|^2)^{\alpha}\diff A(z).
\]
\label{thm-2}
\end{cor}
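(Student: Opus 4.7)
The plan is to deduce this from the \emph{local} Hardy--Littlewood gradient estimate (Corollary \ref{cor-2}) rather than from the global pointwise bound (Corollary \ref{cor-2.6}), since the latter would force us to pay a divergent factor on each fiber. The idea is standard: apply the local gradient estimate on a Whitney-type disk $\D(z_0,r)$ with $r=\tfrac12(1-|z_0|)$, then integrate and swap integrals via Fubini.

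First I would fix $z_0\in\D$ and set $r:=\tfrac12(1-|z_0|)$, so that $\D(z_0,r)\subset\D$ and, crucially, $1-|w|\asymp 1-|z_0|$ uniformly for $w\in\D(z_0,r)$ (with constants depending only on the dimension). Corollary \ref{cor-2} then yields
\[
|\nabla u(z_0)|^p\le \frac{C_2(N,p)}{r^{2+p}}\int_{\D(z_0,r)}|u(w)|^p\,\dA(w).
\]
Multiplying both sides by $(1-|z_0|^2)^{\alpha+p}$ and using $r^{2+p}\asymp(1-|z_0|)^{2+p}$ together with the equivalence $(1-|z_0|^2)^{\alpha}\le C_\alpha(1-|w|^2)^{\alpha}$ for $w\in\D(z_0,r)$ (valid for either sign of $\alpha$), I obtain
\[
|\nabla u(z_0)|^p(1-|z_0|^2)^{\alpha+p}\le\frac{C}{(1-|z_0|)^2}\int_{\D(z_0,r)}|u(w)|^p(1-|w|^2)^{\alpha}\,\dA(w),
\]
for some constant $C=C(N,p,\alpha)$.

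Next, I would integrate this inequality in $z_0$ over $\D$ and invoke Fubini's theorem to switch the order of integration. For fixed $w\in\D$, the set of points $z_0$ for which $w\in\D(z_0,\tfrac12(1-|z_0|))$ satisfies $|z_0-w|<\tfrac12(1-|z_0|)$, and every such $z_0$ lies in a disk of radius $\asymp 1-|w|$ around $w$, while $(1-|z_0|)\asymp(1-|w|)$ there. Consequently
\[
\int_{\{z_0:\,|z_0-w|<(1-|z_0|)/2\}}\frac{\dA(z_0)}{(1-|z_0|)^2}\le\frac{C'\,(1-|w|)^2}{(1-|w|)^2}=C',
\]
uniformly in $w$. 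Feeding this back into the integrated inequality gives
\[
\int_\D|\nabla u|^p(1-|z|^2)^{\alpha+p}\,\dA(z)\le C_5(N,p,\alpha)\int_\D|u|^p(1-|z|^2)^{\alpha}\,\dA(z),
\]
which is the desired estimate.

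The only mild obstacle is bookkeeping with the two cases of the sign of $\alpha$ in the equivalence $(1-|z_0|^2)^\alpha\asymp(1-|w|^2)^\alpha$, and verifying the Fubini step (i.e., that the set of admissible $z_0$ for each $w$ has area comparable to $(1-|w|)^2$ and lies where $1-|z_0|\asymp 1-|w|$); both are routine Whitney-covering-type manipulations. No new ideas beyond the local ellipticity estimate and the geometry of hyperbolic disks are needed.
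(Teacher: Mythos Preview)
Your argument is correct and is exactly the standard route: apply the local gradient bound of Corollary~\ref{cor-2} on the Whitney disk $\D(z_0,\tfrac12(1-|z_0|))$, use the comparability $1-|w|\asymp 1-|z_0|$ to move the weight inside the integral, and then interchange the order of integration. The bookkeeping you flag (sign of $\alpha$, size of the Fubini region) is routine and goes through just as you indicate; in particular, for fixed $w$ the condition $|z_0-w|<\tfrac12(1-|z_0|)$ forces $\tfrac23(1-|w|)<1-|z_0|<2(1-|w|)$ and $|z_0-w|<1-|w|$, so the inner integral is bounded by an absolute constant.

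There is nothing to compare against in the paper itself: the corollary is stated without proof and attributed to Pavlovi\'c \cite{Pa2}, \cite{Pa1}. Your derivation is precisely the kind of argument one would supply, and it relies only on Corollary~\ref{cor-2}, which the paper already records.
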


\begin{cor} $(0<p<+\infty)$
If $u\in\mathrm{PH}_{N,\alpha}^p(\D)$, then $\partial_z u$ and 
$\bar\partial_z u$ are both in $\mathrm{PH}_{N,\alpha+p}^p(\D)$.
\label{cor-3}
\end{cor}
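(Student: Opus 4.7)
The plan is to combine two ingredients: the fact that the complex differentiation operators $\partial_z$ and $\bar\partial_z$ preserve $N$-harmonicity, and the weighted gradient estimate supplied by Corollary \ref{thm-2}.

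First, I would observe that because $\partial_z$ and $\bar\partial_z$ are constant-coefficient operators, they commute with $\Delta=4\partial_z\bar\partial_z$, and hence with $\Delta^N$. Therefore, if $\Delta^N u=0$ on $\D$, then
\[
\Delta^N(\partial_z u)=\partial_z\Delta^N u=0,\qquad \Delta^N(\bar\partial_z u)=\bar\partial_z\Delta^N u=0,
\]
so both $\partial_z u$ and $\bar\partial_z u$ belong to $\mathrm{PH}_N(\D)$.

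Next, I would use the identity \eqref{eq-nabla2}, which gives the pointwise bound
\[
|\partial_z u|^2\le \tfrac12|\nabla u|^2,\qquad |\bar\partial_z u|^2\le \tfrac12|\nabla u|^2,
\]
and therefore $|\partial_z u|^p+|\bar\partial_z u|^p\le 2^{1-p/2}|\nabla u|^p$ for each $p$ with $0<p<+\infty$. Multiplying by the weight $(1-|z|^2)^{\alpha+p}$ and integrating over $\D$, Corollary \ref{thm-2} immediately produces
\[
\int_\D\bigl(|\partial_z u|^p+|\bar\partial_z u|^p\bigr)(1-|z|^2)^{\alpha+p}\,\diff A(z)
\le 2^{1-p/2}C_5(N,p,\alpha)\int_\D |u|^p(1-|z|^2)^\alpha\,\diff A(z)<+\infty,
\]
using the hypothesis $u\in \mathrm{PH}^p_{N,\alpha}(\D)$ to bound the right-hand side. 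Combined with the first step, this places $\partial_z u$ and $\bar\partial_z u$ in $\mathrm{PH}^p_{N,\alpha+p}(\D)$, as claimed. There is no real obstacle here beyond assembling the two ingredients; the only minor point is the trivial constant $2^{1-p/2}$ arising from \eqref{eq-nabla2}, which can be absorbed into the final constant.
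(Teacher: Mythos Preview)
Your proof is correct and is exactly the argument the paper intends: the corollary is stated without proof immediately after Corollary~\ref{thm-2}, to be read as the obvious consequence of that gradient estimate together with the trivial observation that $\partial_z,\bar\partial_z$ commute with $\Delta^N$. The constant bookkeeping via \eqref{eq-nabla2} is fine.
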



\begin{cor} $(0<p<+\infty)$
If $u\in\mathrm{PH}_{N,\alpha}^p(\D)$, then $\partial_z^j \bar\partial_z^k 
u\in \mathrm{PH}_{N,\alpha+(j+k)p}^p(\D)$ for $j,k=0,1,2,\ldots$. 
\label{cor-4}
\end{cor}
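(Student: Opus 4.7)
The plan is to argue by induction on the order $m := j+k$ of the differentiation, using Corollary \ref{cor-3} as the single-step engine. The base case $m=0$ is trivial since $\partial_z^0\bar\partial_z^0 u = u$ is in $\mathrm{PH}^p_{N,\alpha}(\D)$ by hypothesis.

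For the inductive step, suppose that for some $m\ge 0$ we have $\partial_z^{j'}\bar\partial_z^{k'}u\in\mathrm{PH}^p_{N,\alpha+(j'+k')p}(\D)$ whenever $j'+k'\le m$. Let $(j,k)$ be any pair with $j+k=m+1$. Since $j+k\ge1$, at least one of $j,k$ is positive; without loss of generality, $j\ge1$, and we write
\[
\partial_z^j\bar\partial_z^k u=\partial_z\bigl(\partial_z^{j-1}\bar\partial_z^k u\bigr).
\]
By the inductive hypothesis applied with $(j',k')=(j-1,k)$, the function $v:=\partial_z^{j-1}\bar\partial_z^k u$ lies in $\mathrm{PH}^p_{N,\alpha+mp}(\D)$. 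Now apply Corollary \ref{cor-3} to $v$ (with the weight exponent $\alpha+mp$ replacing $\alpha$): it yields
\[
\partial_z v,\ \bar\partial_z v\in\mathrm{PH}^p_{N,\alpha+mp+p}(\D)=\mathrm{PH}^p_{N,\alpha+(m+1)p}(\D),
\]
which gives the desired conclusion for $(j,k)$. The case $k\ge1$ is symmetric, using $\bar\partial_z$ in place of $\partial_z$ at the last step.

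The only subtlety, which is built into the statement of Corollary \ref{cor-3} and thus requires no separate verification here, is that the complex derivatives preserve $N$-harmonicity; indeed, $\partial_z$ and $\bar\partial_z$ commute with one another and hence with $\Delta=4\partial_z\bar\partial_z$, so $\Delta^N u=0$ forces $\Delta^N(\partial_z^j\bar\partial_z^k u)=0$, guaranteeing membership in $\mathrm{PH}_N(\D)$. Thus the argument reduces to a clean induction with no real obstacle: each application of Corollary \ref{cor-3} costs exactly one extra factor of $p$ in the weight exponent, and $m+1$ applications accumulate the claimed shift of $(j+k)p$.
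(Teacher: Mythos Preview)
Your proof is correct and matches the paper's intended argument: the paper states this corollary without proof immediately after Corollary~\ref{cor-3}, treating it as the obvious consequence of iterating that result, which is precisely the induction you carry out. Your additional remark on the preservation of $N$-harmonicity is already encoded in the conclusion of Corollary~\ref{cor-3} (which lands in $\mathrm{PH}^p_{N,\alpha+p}(\D)$, not merely $L^p_{\alpha+p}(\D)$), so no separate verification is needed.
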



\subsection{Control of the antiderivative}

A particular instance of Corollary \ref{cor-3} is when $f$ is holomorphic in
$\D$: If $f\in L^p_{\alpha}(\D)$ then $f'\in L^p_{\alpha+p}(\D)$. In the
converse direction, we have the following. 

\begin{prop}
$(0<p<+\infty)$ Suppose $f$ is holomorphic in $\D$, with 
$f'\in L^p_{\alpha+p}(\D)$ for some real parameter $\alpha$. 
Then: 

\noindent{\rm(a)} If $\alpha\le-1-p$, then $f$ is constant.

\noindent{\rm(b)} If $\alpha>-1$ and $1\le p<+\infty$, then
$f\in L^p_{\alpha}(\D)$. 

\noindent{\rm(c)} If $-1-p<\alpha\le-1$ and $1\le p<+\infty$, then
$f\in L^p_{\beta}(\D)$ for every $\beta>-1$. 

\noindent{\rm(d)} If $\frac12<p<1$ and $-1-p<\alpha\le p-2$, then 
$f\in L^1_{\beta}(\D)\cap L^p_{\beta}(\D)$ for every $\beta>-1$.

\noindent{\rm(e)} If $0<p<1$ and $\alpha>p-2$, then 
$f\in L^1_{-2+(\alpha+2)/p}(\D)$. Moreover, the inclusion
$L^1_{-2+(\alpha+2)/p}(\D)\subset L^p_{\alpha+1-p+\epsilon}(\D)$ holds 
for every $\epsilon>0$.
\label{prop-2.10}
\end{prop}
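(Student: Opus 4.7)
The proposition transfers control from $f'$ to $f$ for a holomorphic $f$ on $\D$. I will rely on three ingredients. \emph{(i)} The pointwise bound
\[
|f'(z)|\le C\,(1-|z|)^{-(\alpha+p+2)/p}\,\|f'\|_{L^p_{\alpha+p}},\qquad z\in\D,
\]
which is Corollary \ref{cor-2.6} applied to $f'$ (holomorphic, hence harmonic) with $N=1$. \emph{(ii)} The Bergman equivalence
\[
\|f\|_{L^p_\gamma}^p\asymp|f(0)|^p+\|f'\|_{L^p_{\gamma+p}}^p\qquad(\gamma>-1,\ 0<p<\infty),
\]
for holomorphic $f$; the direction I need is $\|f\|^p\lesssim|f(0)|^p+\|f'\|^p$, which follows from $f(z)-f(0)=\int_0^1 zf'(tz)\,dt$ by Minkowski if $p\ge 1$ and by a radial/Hardy-type argument combined with \emph{(i)} if $p<1$. \emph{(iii)} Subharmonicity of $|f'|^p$.

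\textbf{Parts (a)--(c).} For \emph{(a)}, set $\gamma:=\alpha+p\le-1$. By \emph{(iii)}, $|f'(0)|^p\le\frac{1}{2\pi}\int_0^{2\pi}|f'(re^{i\theta})|^p\,d\theta$; multiplying by $r(1-r^2)^\gamma$ and integrating over $r\in[0,1)$, the left side carries the divergent factor $\int_0^1 r(1-r^2)^\gamma\,dr=+\infty$, while the right side is at most $(2\pi)^{-1}\|f'\|_{L^p_{\alpha+p}}^p<+\infty$, so $f'(0)=0$. Iterating on $f'(z)/z,\,f'(z)/z^2,\ldots$ (each holomorphic in $\D$ and still in $L^p_{\alpha+p}$ by a routine estimate) kills every Taylor coefficient of $f'$, whence $f'\equiv0$ and $f$ is constant. \emph{(b)} is immediate from \emph{(ii)} at $\gamma=\alpha>-1$. \emph{(c)}: given $\beta>-1$, the monotone inclusion $L^p_{\alpha+p}\subset L^p_{\beta+p}$ (valid since $\beta>-1\ge\alpha$) places $f'\in L^p_{\beta+p}$, and \emph{(ii)} at $\gamma=\beta$ yields $f\in L^p_\beta$.

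\textbf{Parts (d) and (e).} The central device is the interpolation splitting $|f'|=|f'|^p\cdot|f'|^{1-p}$ together with \emph{(i)} applied to the second factor: for any real $\sigma$,
\begin{equation*}
\int_\D |f'|\,(1-|z|^2)^{\sigma}\,dA\le C\,\|f'\|_{L^p_{\alpha+p}}^{1-p}\int_\D |f'|^p\,(1-|z|^2)^{\sigma-(1-p)(\alpha+p+2)/p}\,dA.
\end{equation*}
A direct computation shows that the inner exponent equals $\alpha+p$ precisely when $\sigma=\delta_0+1$, where $\delta_0:=-2+(\alpha+2)/p$; and for $\sigma\ge\delta_0+1$ the right side is dominated by $C\,\|f'\|_{L^p_{\alpha+p}}$, so $f'\in L^1_\sigma$. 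For \emph{(d)}, the $L^p_\beta$ portion proceeds as in (c); for the $L^1_\beta$ portion, given $\beta>-1$ the hypothesis $\alpha\le p-2$ forces $\delta_0\le-1<\beta$, so taking $\sigma=\beta+1>\delta_0+1$ gives $f'\in L^1_{\beta+1}$, and \emph{(ii)} at $p=1$, $\gamma=\beta>-1$ produces $f\in L^1_\beta$. For \emph{(e)}: if $\alpha+p\le-1$ then part (a) makes $f$ constant, and $\delta_0>-1$ (from $\alpha>p-2$) places constants in $L^1_{\delta_0}$; otherwise apply the displayed estimate at the critical $\sigma=\delta_0+1$ to obtain $f'\in L^1_{\delta_0+1}$, and \emph{(ii)} at $p=1$, $\gamma=\delta_0>-1$ yields $f\in L^1_{\delta_0}$. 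The second claim of \emph{(e)} is H\"older's inequality with conjugate exponents $1/p$ and $1/(1-p)$: the decomposition $\alpha+1-p+\epsilon=p\delta_0+(p-1+\epsilon)$ gives
\[
\int_\D |f|^p(1-|z|^2)^{\alpha+1-p+\epsilon}\,dA\le\Bigl(\int_\D |f|\,(1-|z|^2)^{\delta_0}\,dA\Bigr)^p\Bigl(\int_\D (1-|z|^2)^{-1+\epsilon/(1-p)}\,dA\Bigr)^{1-p},
\]
and the second factor is finite because $-1+\epsilon/(1-p)>-1$.

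\textbf{Main obstacle.} The most delicate regime is $0<p<1$: the Bergman equivalence \emph{(ii)} is not available from Minkowski, and the endpoint of the embedding $A^p_{\alpha+p}\subset A^1_{\delta_0+1}$ underlying (e) cannot be extracted by naively substituting the pointwise bound \emph{(i)} into $\int_\D|f'|(1-|z|)^{\delta_0+1}\,dA$, since that produces the divergent $\int_\D(1-|z|)^{-2}\,dA$. The interpolation trick $|f'|=|f'|^p\cdot|f'|^{1-p}$ circumvents this precisely by converting the $L^1$ integral of $|f'|$ into an $L^p$ integral of $|f'|^p$ against a corrected weight whose exponent matches $\alpha+p$ exactly; this is the key technical insight of the argument.
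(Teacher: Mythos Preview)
Your proof is correct and follows essentially the same route as the paper's. Both arguments hinge on the pointwise bound from Corollary~\ref{cor-2.6}, the interpolation splitting $|f'|=|f'|^{p}\cdot|f'|^{1-p}$ to pass from $L^p$ to $L^1$ control on $f'$ when $0<p<1$, and a standard antiderivative estimate (your ingredient \emph{(ii)}, which the paper invokes as Proposition~1.11 of \cite{HKZ}); your H\"older computation for the inclusion in (e) is exactly what the paper has in mind. The only minor divergence is in the $L^p_\beta$ portion of (d): the paper first obtains $f\in L^1_\beta$ and then deduces $f\in L^p_\beta$ by H\"older, whereas you appeal to the Bergman equivalence \emph{(ii)} directly for $p<1$---both are valid, and your route is arguably cleaner, though it relies on \emph{(ii)} in the range $0<p<1$ where your justification (``a radial/Hardy-type argument'') is left as a sketch.
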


\begin{proof}
If $\alpha\le-1-p$ we must have $f'(z)\equiv0$ which makes $f$ constant. This
settles part (a). As for parts (b) and (c), this follows from Proposition 
1.11 of \cite{HKZ}. 

We turn to the case $0<p<1$. Holomorphic functions are harmonic, so 
Corollary \ref{cor-2.6} applied to $u=f'$ and $N=1$ tells us that the function
\[
(1-|z|^2)^{\alpha+p+2}|f'(z)|^p
\]
is uniformly bounded in the disk $\D$. It now follows from this and
our assumption $f'\in L^p_{\alpha+p}(\D)$ that
\[
\int_\D|f(z)|(1-|z|^2)^{\alpha+p+(\alpha+p+2)(1-p)/p}\diff A(z)<+\infty,
\] 
which we simplify to $f'\in L^1_{-1+(\alpha+2)/p}(\D)$. If $\alpha\le p-2$,
this gives that $f\in L^1_{\beta}(\D)$ for every $\beta>-1$, by
Proposition 1.11 of \cite{HKZ}, which settles part (d), if we also use 
H\"older's inequality to obtain the $L^p$ statement. If instead $\alpha
>p-2$, then Proposition 1.11 of \cite{HKZ} tells us that 
$f\in L^1_{-2+(\alpha+2)/p}(\D)$, and part (e) follows, except for the 
inclusion. The inclusion is a simple consequence of H\"older's inequality.
\end{proof}

\subsection{Control of individual terms in the Almansi expansion}
With the help of Proposition \ref{prop-2.10}, we may now obtain integral
control of the individual terms of the Almansi expansion.
 
\begin{cor} $(0<p<+\infty)$
If $u\in\mathrm{PH}_{N,\alpha}^p(\D)$ has the Almansi expansion 
\[
u(z)=u_0(z)+|z|^2 u_1(z)+\cdots+|z|^{2N-2}u_{N-1}(z),
\]
where the $u_j$ are harmonic in $\D$, then

\noindent{\rm(a)} if $1\le p<+\infty$ and $\alpha>-1-(N-1)p$, then
$u_{N-1}\in L^p_{\alpha+(N-1)p}(\D)$, and

\noindent{\rm(b)} if $0<p<1$ and $\alpha\ge-1-N+p$, then
$u_{N-1}\in L^p_{\alpha+N-p+\epsilon}(\D)$, for every $\epsilon>0$.

\noindent{\rm(c)} if $1\le p<+\infty$ and $\alpha\le-1-(N-1)p$, or if 
$0<p<1$ and $\alpha\le-1-N+p$, then
$u_{N-1}\in L^p_{-1+\epsilon}(\D)$, for every $\epsilon>0$.

\label{cor-5}
\end{cor}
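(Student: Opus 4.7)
My plan is to prove Corollary~\ref{cor-5} by induction on $N$, reducing an $N$-harmonic function to an $(N-1)$-harmonic function via the Laplacian and then invoking Proposition~\ref{prop-2.10} to recover integrability of the top Almansi coefficient from that of its image under $(N-1+r\partial_r)$. The base case $N = 1$ is trivial, since $u_{N-1} = u_0 = u$ so the conclusions of (a)--(c) reduce to the hypothesis $u \in L^p_\alpha(\D)$ (together with the inclusion $L^p_\alpha \subset L^p_{\alpha'}$ for $\alpha' > \alpha$).

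For the inductive step, the starting observation is the identity $\Delta(|z|^{2k} v) = 4k|z|^{2k-2}(k+r\partial_r)v$, valid for any harmonic $v$, where the right-hand factor $(k+r\partial_r)v$ is itself harmonic (this follows from $\Delta(r\partial_r v) = 0$, which in turn follows from $4\partial_z\bar\partial_z(z\partial_z v) = \partial_z(z\Delta v) = 0$). Applied term-by-term to the Almansi expansion of $u$, this yields the Almansi expansion of the $(N-1)$-harmonic function $\Delta u$:
\[
\Delta u = \sum_{j=0}^{N-2} |z|^{2j}\, 4(j+1)(j+1+r\partial_r) u_{j+1},
\]
whose top coefficient is $4(N-1)(N-1+r\partial_r)u_{N-1}$. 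By Corollary~\ref{cor-4}, $\Delta u \in \mathrm{PH}^p_{N-1,\alpha+2p}(\D)$, and the inductive hypothesis applied at level $(N-1,\alpha+2p)$ gives integrability of $H := (N-1+r\partial_r) u_{N-1}$; in case (a) this produces $H \in L^p_{\alpha+Np}(\D)$.

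To recover $u_{N-1}$ from $H$, I write $u_{N-1} = h + \bar g$ with $h,g$ holomorphic in $\D$ and treat $h$ (the argument for $g$ being symmetric). Using the commutator $[\partial_z, r\partial_r] = \partial_z$, one checks that $\partial_z H = Nh' + zh'' = H_+'$, where $H_+ := (N-1)h + zh'$; thus the antiholomorphic component of $H$ is automatically annihilated. By Corollary~\ref{cor-4}, $H_+' \in L^p_{\alpha+(N+1)p}$, and Proposition~\ref{prop-2.10}(b) applied to the holomorphic function $H_+$ gives $H_+ \in L^p_{\alpha+Np}$. The algebraic identity $(z^{N-1} h)' = z^{N-2} H_+$, together with the bound $|z^{N-2}| \le 1$, then shows that $G := z^{N-1} h$ is holomorphic with $G' \in L^p_{\alpha+Np}$; a second application of Proposition~\ref{prop-2.10}(b) yields $G \in L^p_{\alpha+(N-1)p}(\D)$. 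Since $h = G/z^{N-1}$ is holomorphic in $\D$ and locally bounded near the origin (by Hardy--Littlewood ellipticity, Lemma~\ref{thm-1}), the division by $z^{N-1}$ costs only a bounded factor on $\{|z| \ge \tfrac12\}$ and $h$ inherits the integrability of $G$, giving $u_{N-1} \in L^p_{\alpha+(N-1)p}(\D)$.

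Cases (b) and (c) follow the same scheme with the corresponding subcases of Proposition~\ref{prop-2.10}: case (b) invokes part (e) (incurring an arbitrarily small $\epsilon$-loss in each of the two applications, which accumulates harmlessly); case (c) invokes parts (a) and (c), with the extreme subcase forcing $H_+$ to be constant—solving $(N-1)h + zh' = \text{const}$ for holomorphic $h$ yields $h$ constant, making $u_{N-1}$ constant and hence in $L^p_\beta(\D)$ for every $\beta > -1$. The main obstacle is the careful bookkeeping of the threshold conditions in Proposition~\ref{prop-2.10} across the two applications, so that the correct subcase is triggered in each regime of $(p,\alpha)$; the key device enabling the argument for all $0 < p < \infty$ (rather than just $p > 1$) is the use of $\partial_z$ to separate the holomorphic component, avoiding the $L^p$-unbounded Bergman projection for $p \le 1$.
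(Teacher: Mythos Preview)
Your inductive strategy is genuinely different from the paper's direct approach, and it has a real gap in case~(b) for small $p$. The paper does not induct on $N$: it applies $\partial_z\Delta^{N-1}$ in one stroke, using the identity
\[
\partial_z\Delta^{N-1}u(z)=(N-1)!\,4^{N-1}\,\partial_z^{N}\bigl[z^{N-1}f_{N-1}(z)\bigr],
\]
and then integrates the holomorphic function $z^{N-1}f_{N-1}$ back $N$ times via Proposition~\ref{prop-2.10}. All $N$ integrations are of the same type (holomorphic antiderivative), so the threshold bookkeeping in Proposition~\ref{prop-2.10} is done in a single straight run and the cases (a)--(c) of the corollary correspond cleanly to which branch of Proposition~\ref{prop-2.10} is triggered at the final step.

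Your induction, by contrast, passes through the harmonic intermediary $H=(N-1+r\partial_r)u_{N-1}$, and the only control you have on $H$ comes from the inductive hypothesis at level $N-1$ applied to $\Delta u\in\mathrm{PH}^p_{N-1,\alpha+2p}(\D)$. The problem is that the case at level $N-1$ need not match the case at level $N$. Concretely, take $N=3$, $p=\tfrac14$, $\alpha=-\tfrac{15}{4}$, which sits in case~(b) at level $3$ (since $\alpha\ge-1-N+p=-\tfrac{15}{4}$). Then $\Delta u\in\mathrm{PH}^{1/4}_{2,-13/4}(\D)$, and at level $2$ the threshold for case~(b) is $-\tfrac{11}{4}$; since $-\tfrac{13}{4}<-\tfrac{11}{4}$, the inductive hypothesis only delivers case~(c): $H\in L^p_{-1+\epsilon}(\D)$. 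From there your two applications of Proposition~\ref{prop-2.10}(e) give, successively, $H_+\in L^p_{-1/4+\epsilon}(\D)$ and then $G=z^{2}h\in L^p_{1/4+\epsilon}(\D)$, hence $h\in L^p_{1/4+\epsilon}(\D)$. But the assertion of case~(b) at level $3$ demands $u_{2}\in L^p_{\alpha+N-p+\epsilon}(\D)=L^p_{-1+\epsilon}(\D)$, which is strictly stronger. No other branch of Proposition~\ref{prop-2.10} helps here, because the relevant ``$\alpha$'' parameters ($-1+\epsilon$ and $-\tfrac12+\epsilon$) both lie above $-1-p=-\tfrac54$, so part~(a) is unavailable and only part~(e) applies. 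The loss is intrinsic to routing the argument through the weak conclusion of case~(c) at level $N-1$; the paper's direct approach avoids this by never leaving the holomorphic category and by reaching part~(a) of Proposition~\ref{prop-2.10} (forcing a high derivative of $z^{N-1}f_{N-1}$ to be constant) at the very first integration step in this regime.
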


\begin{proof}
We split $u_j=f_j+\overline{g_j}$, where $f_j,g_j$ are holomorphic, with 
$g_j(0)=0$. 
We calculate that 
\[
\partial_z\Delta^{N-1}u(z)=(N-1)!4^{N-1}\partial_z^{N}[z^{N-1}f_{N-1}(z)],
\]
and see from Corollary \ref{cor-4} that $\partial_z\Delta^{N-1}u$ is a 
holomorphic function in $L^p_{\alpha+(2N-1)p}(\D)$. Integrating backwards 
using Proposition \ref{prop-2.10} shows that if $1\le p<+\infty$ and
$\alpha>-1-(N-1)p$,
then $f_{N-1}\in L^p_{\alpha+(N-1)p}(\D)$, while if 
$0<p<1$ and $\alpha\ge-1-N+p$, we instead have
$f_{N-1}\in L^p_{\alpha+N-p+\epsilon}(\D)$ for every $\epsilon>0$. If instead
$1\le p<+\infty$ and $\alpha\le-1-(N-1)p$ or $0<p<1$ and $\alpha\le-1-N+p$,
we find that $f_{N-1}\in L^{p}_{-1+\epsilon}(\D)$ for every $\epsilon>0$.
Analogously,
the function $g_{N-1}$ has the same integrability properties, and then 
$u_{N-1}=f_{N-1}+\bar g_{N-1}$ automatically has the asserted properties.
The proof is complete.
\end{proof}

\subsection{Divisibility of a polyharmonic functions by a canonical 
factor}
Some $N$-harmonic functions $u(z)$ are of the form 
$(1-|z|^2)\widetilde{u}(z)$,
where $\widetilde{u}$ is $(N-1)$-harmonic. The following result offers 
an instance when this happens.

\begin{prop} $(0<p<+\infty)$ Suppose $u\in\mathrm{PH}^p_{N,\alpha}(\D)$ for some
integer $N=1,2,3,\ldots$, and a real $\alpha$ with 
$\alpha\le\min\{p-2,-1\}$. Then if $N\ge2$, $u$ has the form
$u(z)=(1-|z|^2)\widetilde u(z)$, where $\widetilde u\in
\mathrm{PH}^p_{N-1,\alpha+p}(\D)$, while if $N=1$, we have that $u(z)\equiv0$.
\label{prop-5.5}
\end{prop}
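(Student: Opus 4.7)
The plan is to argue by induction on $N$, using Aleksandrov's harmonic uniqueness as the base case and peeling off one layer of the alternative Almansi expansion at each inductive step.

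For $N=1$, I first check that the hypothesis $\alpha\le\min\{p-2,-1\}$ forces $\alpha\le\beta(1,p)=\min\{-1,\max\{p-2,-1-p\}\}$ in each of the three $p$-regimes: for $p\ge 1$ both quantities equal $-1$; for $\tfrac12\le p\le 1$ both equal $p-2$; and for $0<p\le\tfrac12$ the inequality $p-2\le -1-p$ gives $\min\{p-2,-1\}=p-2\le -1-p=\beta(1,p)$. The $N=1$ case of Theorem \ref{thm-main2} (Aleksandrov's theorem) then delivers $u\equiv 0$.

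For $N\ge 2$, I use the alternative Almansi expansion $u=v_0+\Mop[v_1]+\cdots+\Mop^{N-1}[v_{N-1}]$ with the $v_j$ harmonic. The desired factorization $u=\Mop[\widetilde u]$ with $\widetilde u\in\mathrm{PH}^p_{N-1,\alpha+p}(\D)$ is equivalent to $v_0\equiv 0$, and the integrability of $\widetilde u$ is then automatic via $\|\Mop w\|_{p,\alpha}=\|w\|_{p,\alpha+p}$. To show $v_0\equiv 0$, I plan to peel off the top layer: apply Corollary \ref{cor-5} to the harmonic Almansi component $u_{N-1}=(-1)^{N-1}v_{N-1}$ to place $v_{N-1}$ in a suitable weighted $L^p$-space, so that $\Mop^{N-1}[v_{N-1}]\in L^p_{\alpha'}(\D)$ for some $\alpha'\le\min\{p-2,-1\}$. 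The remainder $u^{(1)}:=u-\Mop^{N-1}[v_{N-1}]$ is then $(N-1)$-harmonic (its Almansi expansion terminates at index $N-2$) and lives in $L^p_{\alpha'}(\D)$; by the inductive hypothesis, $u^{(1)}=\Mop[w]$ with $w\in\mathrm{PH}^p_{N-2,\alpha'+p}(\D)$ (or $u^{(1)}\equiv 0$ when $N=2$), so $u=\Mop[w+\Mop^{N-2}[v_{N-1}]]$ has the desired form.

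The main obstacle is verifying $\alpha'\le\min\{p-2,-1\}$ after the peeling. For $p\ge 1$ this is routine: Corollary \ref{cor-5}(a) gives $\alpha'=\alpha$, while Corollary \ref{cor-5}(c) gives $\alpha'=-1+\epsilon-(N-1)p\le -1$ for small $\epsilon$. The delicate case is $0<p<1$, where Corollary \ref{cor-5}(b) incurs an $\epsilon$-loss of $N(1-p)+\epsilon>0$ that may push $\alpha'$ above the threshold $p-2$. I expect to handle this either by replacing Corollary \ref{cor-5}(b) with the sharper $L^1$-bound underlying the proof of Proposition \ref{prop-2.10}(e) -- which avoids the H\"older step and hence the $\epsilon$-loss, at the cost of passing temporarily through $L^1$ -- or by iterating Proposition \ref{prop-2.10} directly along the chain $\partial_z\Delta^{N-1}u\to f_{N-1}\to\cdots$ and invoking part (a) of that proposition, which in the subcritical range forces antiderivatives to be constant and yields the extra decay needed to close the induction.
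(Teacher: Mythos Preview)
Your inductive strategy has a genuine gap in the range $0<p<1$, and the fixes you sketch do not close it. The loss you identify is exactly the problem: if $v_{N-1}\in L^p_{\alpha+N-p+\epsilon}$ then $\Mop^{N-1}[v_{N-1}]\in L^p_{\alpha+N(1-p)+\epsilon}$, and for $p<1$ this exponent exceeds $p-2$ whenever $\alpha$ is near the threshold $p-2$, so the inductive hypothesis no longer applies to $u^{(1)}$. Your first proposed fix --- passing through the $L^1$ estimate of Proposition~\ref{prop-2.10}(e) --- does eliminate the $\epsilon$, but it lands you in $L^1$ with a weight of order $(\alpha+2)/p+N-3$ after integrating back $N$ times, and converting this back to an $L^p$ statement with weight $\le p-2$ reintroduces exactly the H\"older loss you were trying to avoid. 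Your second fix --- invoking Proposition~\ref{prop-2.10}(a) to force antiderivatives to be constant --- controls the holomorphic pieces of $v_{N-1}$, not $v_0$; there is no direct chain of antiderivatives from $\partial_z\Delta^{N-1}u$ to $v_0$, so this does not yield $v_0\equiv0$. There is also a minor logical issue: you invoke ``the $N=1$ case of Theorem~\ref{thm-main2}'' for the base case, but in the paper Theorem~\ref{thm-main2} is proved \emph{using} the present proposition (via Proposition~\ref{prop-8.1}); you should cite Aleksandrov's theorem directly as an external result, or better, avoid it.

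The paper's argument sidesteps all of this by going after $v_0$ directly, with no induction and no control of $v_{N-1}$. The key observation is that the hypothesis $\alpha\le\min\{p-2,-1\}$ gives
\[
\int_\D\frac{|u(z)|}{1-|z|^2}\,\diff A(z)<+\infty
\]
(for $p\ge1$ this is H\"older; for $0<p<1$ one combines $u\in L^p_{p-2}$ with the pointwise bound $(1-|z|)|u(z)|\in L^\infty$ from Corollary~\ref{cor-2.6}). This forces $\liminf_{\varrho\to1^-}\int_{\Te(0,\varrho)}|u|\,\diff s=0$, and then the Poisson estimate \eqref{eq-Poisson3} for the extension $\Eop[u](z,\varrho)$ gives $v_0(z)=\lim_{\varrho\to1^-}\Eop[u](z,\varrho)=0$ pointwise. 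Once $v_0\equiv0$, the factorization $u=\Mop[\widetilde u]$ with $\widetilde u\in\mathrm{PH}^p_{N-1,\alpha+p}(\D)$ is immediate from the alternative Almansi expansion.
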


\begin{proof}
We first consider the case $0<p<1$. Since then $\alpha\le p-2$, we have that
$u\in\mathrm{PH}_{N,\alpha}(\D)\subset\mathrm{PH}_{N,p-2}(\D)$, because
\begin{equation}
\|u\|^p_{p,p-2}=\int_\D |u(z)|^p(1-|z|^2)^{p-2}\diff A(z)\le
\int_\D |u(z)|^p(1-|z|^2)^{\alpha}\diff A(z)=\|u\|^p_{p,\alpha}<+\infty,
\label{eq-est3.11-1}
\end{equation}
and the pointwise estimate of Corollary \ref{cor-2.6} tells us that
\begin{equation}
[(1-|z|^2)|u(z)|]^p\le 2^p C_3(N,p,p-2)\,\|u\|^p_{p,p-2},\qquad z\in\D.
\label{eq-est3.12-1}
\end{equation}
We conclude from \eqref{eq-est3.11-1} and \eqref{eq-est3.12-1} that
\begin{equation}
\int_\D \frac{|u(z)|}{1-|z|^2}\diff A(z)\le
2^{1-p}[C_3(N_0,p,p-2)]^{(1-p)/p}\|u\|_{p,\alpha}<+\infty,
\label{eq-est3.13-1}
\end{equation}
which by an elementary argument involving polar coordinates implies that 
\begin{equation}
\liminf_{\varrho\to 1^-}\int_{\Te(0,\varrho)} |u|\diff s=0.
\label{eq-3.14-1}
\end{equation}
From the alternative Almansi representation \eqref{eq-Almansi2}, we have that
\[
u(z)=v_0(z)+(1-|z|^2)v_1(z)+\cdots+(1-|z|^2)^{N-1}v_{N-1}(z),
\]
where the functions $v_j$ are all harmonic in $\D$. The extension of $u$
can then be written as
\[
\mathbf{E}[u](z,\varrho)=v_0(z)+(1-\varrho^2)v_1(z)+\cdots+
(1-\varrho^2)^{N-1}v_{N-1}(z),
\]
and by \eqref{eq-Poisson3} combined with \eqref{eq-3.14-1}, we find that
\begin{equation}
|v_0(z)|=\lim_{\varrho\to1^-}
|\Eop[u](z,\varrho)|\le\liminf_{\varrho\to1^-}
\frac{1}{2\pi\varrho}\,\frac{\varrho+|z|}{\varrho-|z|}
\int_{\Te(0,\varrho)}|u(\zeta)|\diff s(\zeta)=0,\qquad z\in\D,
\label{eq-Poisson3.1-1}
\end{equation} 
that is, $v_0(z)\equiv0$. In case $N=1$, we are finished. In case $N\ge2$,
we note that by the alternative Almansi representation, this means that the 
function
\[
\widetilde u(z):=
\frac{u(z)}{1-|z|^2}=v_1(z)+(1-|z|^2)v_2(z)\cdots+(1-|z|^2)^{N-2}v_{N-1}(z),
\]
is $(N-1)$-harmonic. 
It follows that $\widetilde u\in\mathrm{PH}^p_{N-1,\alpha+p}(\D)$, as claimed. 

We finally turn to the case $1\le p<+\infty$.
Since $\alpha\le-1$, we have
\[
\|u\|_{p,-1-p}^p=\int_{\D}\frac{|u(z)|^p}{1-|z|^2}\diff A(z)\le
\int_{\D}|u(z)|^p(1-|z|^2)^{\alpha}\diff A(z)<+\infty.
\]
An elementary argument now shows that
\[
\liminf_{\varrho\to1^-}\int_{\Te(0,\varrho)}|u|^p\diff s
=0
\]
so that in particular (recall that $1\le p<+\infty$) -- 
by H\"older's inequality --
\[
\liminf_{\varrho\to1^-}\int_{\Te(0,\varrho)}|u|\diff s
=0.
\]
This is \eqref{eq-3.14-1}. We may then use \eqref{eq-Poisson3.1-1} to conclude
that $v_0(z)\equiv0$, and if $N=1$, we are finished. If $N\ge2$, we obtain
instead that $u(z)=(1-|z|^2)\widetilde u(z)$ where $\widetilde u$ is 
$(N-1)$-harmonic. Then $\widetilde u\in\mathrm{PH}^p_{N-1,\alpha+p}(\D)$, as 
claimed.
\end{proof}

\subsection{A criterion for the triviality of a polyharmonic function}
We obtain a sufficient criterion for $\mathrm{PH}^p_{N,\alpha}(\D)=\{0\}$.  

\begin{prop} $(0<p<+\infty)$ 
Suppose $\alpha$ is real with $\alpha\le-1-(2N-1)p$,  for some
integer $N=1,2,3,\ldots$. Then $\mathrm{PH}^p_{N,\alpha}(\D)=\{0\}$. 
\label{prop-5.6}
\end{prop}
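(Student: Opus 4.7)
My plan is to induct on $N$. For the base case $N=1$, the Aleksandrov evaluation in Section~2 gives $\beta(1,p)\ge-1-p$ for every $p>0$, so the hypothesis $\alpha\le-1-p$ sits at or below the critical exponent and forces $\mathrm{PH}^p_{1,\alpha}(\D)=\{0\}$.

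For the inductive step with $N\ge2$, take $u\in\mathrm{PH}^p_{N,\alpha}(\D)$ with $\alpha\le-1-(2N-1)p$. The target is to show $\Delta^{N-1}u\equiv0$: once established, $u$ is $(N-1)$-harmonic and belongs to $\mathrm{PH}^p_{N-1,\alpha}(\D)$ with $\alpha\le-1-(2(N-1)-1)p$, so the induction hypothesis closes the argument. Since $u$ is $N$-harmonic, $\Delta^{N-1}u$ is harmonic and splits uniquely as $h(z)+\overline{k(z)}$ with $h,k$ holomorphic on $\D$. By Corollary~\ref{cor-4} applied with $j+k=2N-1$, the functions $\partial_z\Delta^{N-1}u=4^{N-1}h'$ and $\bar\partial_z\Delta^{N-1}u=4^{N-1}\,\overline{k'}$ lie in $L^p_{\alpha+(2N-1)p}(\D)$. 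Because $\alpha+(2N-1)p\le-1$, Proposition~\ref{prop-2.10}(a) applied to the holomorphic antiderivative $h$ (and, via $|\overline{k'}|=|k'|$, also to $k$) forces $h'\equiv k'\equiv0$. Hence $\Delta^{N-1}u$ collapses to a constant $C$.

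To rule out $C\ne0$, I would apply Corollary~\ref{cor-4} once more with $j+k=2(N-1)$: the constant $\Delta^{N-1}u\equiv C$ lies in $L^p_{\alpha+2(N-1)p}(\D)$, and the hypothesis yields the \emph{strict} bound $\alpha+2(N-1)p\le-1-p<-1$. Lemma~\ref{lem-int} with $b=0$ and $a=\alpha+2(N-1)p$ then gives $\int_{\D}(1-|z|^2)^{a}\diff A(z)=+\infty$, so any nonzero constant is excluded and $C=0$. Thus $\Delta^{N-1}u\equiv0$, and the induction is complete. The subtlety to watch for is this last margin: the extra factor of $p$ in the hypothesis $\alpha\le-1-(2N-1)p$ (rather than the weaker $\alpha\le-1-2(N-1)p$ that suffices for the previous step) is exactly what converts ``$\Delta^{N-1}u$ is constant'' into ``$\Delta^{N-1}u\equiv0$''.
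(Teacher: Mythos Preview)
Your argument is correct and follows essentially the same route as the paper: reduce to showing $\Delta^{N-1}u\equiv0$ by pushing derivatives into $L^p_{-1}$ via Corollary~\ref{cor-4}, then iterate downward. Two minor remarks: (i) the factor $4^{N-1}$ in $\partial_z\Delta^{N-1}u=4^{N-1}h'$ should not be there (you get $h'$ on the nose), though this is harmless; (ii) your base case appeals to Aleksandrov's theorem, but as stated in Section~2 that theorem only computes $\beta(1,p)$ and does not by itself say the infimum is unattained, which you need at $\alpha=-1-p$ for $0<p\le\tfrac12$ --- the paper instead proves the $N=1$ case directly, and in fact your own inductive-step argument (Proposition~\ref{prop-2.10}(a) plus the constant-exclusion via Lemma~\ref{lem-int}) already handles $N=1$ self-containedly, so you could simply drop the external citation.
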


\begin{proof}
As the spaces $\mathrm{PH}^p_{N,\alpha}(\D)$ grow with $\alpha$, it is enough
to obtain the result when $\alpha$ is critically big: $\alpha=-1-(2N-1)p$.

\medskip

\noindent\textsc{Step 1}. {\em We show that the assertion holds for $N=1$}:
$\mathrm{PH}^p_{1,-1-p}(\D)=\{0\}$. To this end,
we pick a function 
$v\in\mathrm{PH}^p_{1,-1-p}(\D)$,
and observe that since $v$
is harmonic, $\partial_zv$ is holomorphic while $\bar\partial_zv$ is 
conjugate-holomorphic. Moreover, Corollary \ref{cor-3} tells us that
$\partial_z v,\bar\partial_z v\in\mathrm{PH}^p_{1,-1}(\D)$. Now 
$|\partial_z v|^p$ and $|\bar\partial_zv|^p$ are both subharmonic in $\D$, 
and in particular, their means on the circles $|z|=r$ increase with $r$, 
$0<r<1$. Using polar coordinates, then, we realize that
\[
\|\partial_zv\|^p_{p,-1}+\|\bar\partial_zv\|^p_{p,-1}=
\int_\D\big(|\partial_z v(z)|^p+|\bar\partial_z v(z)|^p\big)
\frac{\diff A(z)}{1-|z|^2}<+\infty
\]
forces $\partial_zv=0$ and $\bar\partial_zv=0$, so that $v$ must be
constant. As the only constant function in $\mathrm{PH}^p_{1,-1-p}(\D)$ is the
zero function, we obtain $v=0$, and the conclusion 
$\mathrm{PH}^p_{1,-1-p}(\D)=\{0\}$ is immediate.

\medskip

\noindent\textsc{Step 2}. {\em We show that the assertion holds for $N>1$}:
$\mathrm{PH}^p_{1,-1-(2N-1)p}(\D)=\{0\}$. We pick a function 
$u\in\mathrm{PH}^p_{N,-1-(2N-1)p}(\D)$ and intend to obtain that $u=0$. Since 
$u$ is $N$-harmonic, the function $v_1:=\Delta^{N-1}u$ is harmonic. Moreover, 
as 
\[
v_1(z)=\Delta^{N-1}u(z)=4^{N-1}\partial_z^{N-1}\bar\partial_z^{N-1}u(z)
\]
Corollary \ref{cor-4} now tells us that 
$v_1=\Delta^{N-1}u\in\mathrm{PH}^p_{1,-1-p}(\D)$. This case we handled in Step 1,
so that we know that $v_1=0$. But then the function  $v_2:=\Delta^{N-2}u$ is
harmonic, and by Corollary \ref{cor-4}, 
$v_2=\Delta^{N-2}u\in\mathrm{PH}^p_{1,-1-3p}(\D)=\{0\}$ because trivially
$\mathrm{PH}^p_{1,-1-3p}(\D)\subset\mathrm{PH}^p_{1,-1-p}(\D)=\{0\}$. If $N=2$,
we have arrived at the conclusion that $u=0$, as needed. If $N>2$, we continue,
and form the successively the functions  $v_j:=\Delta^{N-j}u$ which belong to
$\mathrm{PH}^p_{1,-1-(2j-1)p}(\D)=\{0\}$ for $j=3,\ldots,N$. With $j=N$ we
arrive at $u=v_N=0$, as needed. 
\end{proof}

\section{The weighted integrability structure of polyharmonic 
functions}
\label{sec-6}

\subsection{A family of polyharmonic kernels}

For $N=1,2,3,\ldots$ and $j=0,1,\ldots,N$, let 
\begin{equation}
U_{j,N}(z):=\frac{(1-|z|^2)^{N+j-1}}{|1-z|^{2j}},
\label{eq-defUNk}
\end{equation}
so that with 
\[
U_{j,j}(z)=\frac{(1-|z|^2)^{2j-1}}{|1-z|^{2j}},
\]
we obtain the relation
\begin{equation}
U_{j,N}(z)=(1-|z|^2)^{N-j}U_{j,j}(z).
\label{eq-relUNk}
\end{equation}
The functions $U_{j,N}$ are $N$-harmonic, and as we shall see, they are 
extremal for the critical integrability type $\beta(N,p)$. The function
$U_{1,1}$ is the Poisson kernel for the boundary point at $1$, and
the function $U_{1,2}$ is known in the context of the bilaplacian as the 
{\em harmonic compensator} \cite{Hed1}. We also note that the function 
$U_{2,2}$ appears implicitly in the biharmonic setting in \cite{AH} 
($U_{2,2}(z)=2F(z,1)-H(z,1)$ in their notation); cf. \cite{Olof1}. 
More recently, in \cite{Olof2} the kernels $U_{N,N}$ are shown to solve the 
Dirichlet problem in the disk $\D$ for a certain (singular) second order
elliptic differential operator. It remains to substantiate that $U_{j,N}$ is 
$N$-harmonic. 

\begin{lem}
For $N=1,2,3,\ldots$ and $j=0,1,\ldots,N$, the functions $U_{j,N}$ are all
$N$-harmonic in $\C\setminus\{1\}$.
\label{lem-6.1}
\end{lem}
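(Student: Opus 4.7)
My plan is to prove $\Delta^N U_{j,N}\equiv 0$ on $\C\setminus\{1\}$ by inspecting the double Taylor series of $U_{j,N}$ about $z=0$. By the Almansi representation \eqref{eq-Almansi1}, a function on $\D$ is $N$-harmonic if and only if its Taylor expansion contains no monomial $z^a\bar z^b$ with $\min(a,b)\ge N$, so it is enough to verify this vanishing for $U_{j,N}$. Once we know $\Delta^N U_{j,N}=0$ on $\D$, the real-analyticity of $U_{j,N}$ on the connected open set $\C\setminus\{1\}$, together with the identity principle for real-analytic functions, immediately propagates the PDE to all of $\C\setminus\{1\}$.

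To extract the Taylor coefficients I would combine the finite binomial expansion
\[
(1-|z|^2)^{N+j-1}=\sum_{k=0}^{N+j-1}\binom{N+j-1}{k}(-1)^k z^k\bar z^k
\]
with the series, valid on $\D$,
\[
\frac{1}{(1-z)^j(1-\bar z)^j}=\sum_{m,n\ge 0}\binom{m+j-1}{j-1}\binom{n+j-1}{j-1}z^m\bar z^n.
\]
Multiplying and setting $a=m+k$, $b=n+k$, the coefficient of $z^a\bar z^b$ in $U_{j,N}$ collapses to the single alternating sum
\[
[z^a\bar z^b]U_{j,N}=\sum_{k}\binom{N+j-1}{k}(-1)^k\binom{a-k+j-1}{j-1}\binom{b-k+j-1}{j-1},
\]
where $k$ runs over those indices for which all three binomials are combinatorially nonzero.

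The crux is then a finite-difference identity. Interpreting $\binom{x}{j-1}=x(x-1)\cdots(x-j+2)/(j-1)!$ as a polynomial of degree $j-1$ in $x$, the product $P(k):=\binom{a-k+j-1}{j-1}\binom{b-k+j-1}{j-1}$ is a polynomial of degree $2j-2$ in $k$. Because $j\le N$, the order of the alternating sum satisfies $N+j-1\ge 2j-1>\deg P$, so the full polynomial version $\sum_{k=0}^{N+j-1}\binom{N+j-1}{k}(-1)^k P(k)$ is $(-1)^{N+j-1}$ times the $(N+j-1)$-th forward difference of $P$ at $0$, and hence vanishes. To match this polynomial sum with the combinatorial one in the range $\min(a,b)\ge N$, I would assume without loss of generality $a\le b$, so $a\ge N$, and observe that the ``extra'' terms with $a<k\le N+j-1$ have polynomial value zero: there $a-k+j-1\in[a-N,\,j-2]\subset[0,\,j-2]$, so one of the $j-1$ consecutive factors defining $\binom{a-k+j-1}{j-1}$ in polynomial form equals zero. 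Hence $[z^a\bar z^b]U_{j,N}=0$ for all $\min(a,b)\ge N$, as required.

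I expect the main obstacle to be the combinatorial bookkeeping: arranging the Cauchy product of the two series into a single alternating sum in $k$, correctly recognising it as a finite difference of a polynomial of just the right degree, and carefully verifying that the polynomial and combinatorial interpretations of the binomials agree over the summation range whenever $\min(a,b)\ge N$.
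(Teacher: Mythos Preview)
Your argument is correct. The Taylor-coefficient criterion you invoke (a real-analytic $u$ on $\D$ is $N$-harmonic iff the coefficient of $z^a\bar z^b$ vanishes whenever $\min(a,b)\ge N$) follows at once from $\Delta^N(z^a\bar z^b)=4^N(a)_N(b)_N\,z^{a-N}\bar z^{b-N}$, your Cauchy-product bookkeeping is right, and the key step --- extending the truncated combinatorial sum to the full range $0\le k\le N+j-1$ by checking that the added values of the polynomial $P$ are zero, and then killing the whole thing as a high-order finite difference of a degree-$(2j-2)$ polynomial --- is carried out carefully. One small omission: the formula $(1-z)^{-j}=\sum_m\binom{m+j-1}{j-1}z^m$ breaks down for $j=0$, but of course $U_{0,N}=(1-|z|^2)^{N-1}$ has bidegree $(N-1,N-1)$ and is trivially $N$-harmonic, so just say so separately.

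The paper's proof is genuinely different and somewhat slicker. It first uses the relation $U_{j,N}=(1-|z|^2)^{N-j}U_{j,j}$ and the Almansi representation to reduce to showing that $U_{N,N}$ is $N$-harmonic, then recenters at the singularity via $\zeta=1-z$ to obtain the \emph{finite} Laurent-type expansion
\[
U_{N,N}(1-\zeta)=\frac{(\zeta+\bar\zeta-\zeta\bar\zeta)^{2N-1}}{\zeta^N\bar\zeta^N}
=\sum_{j,k}(-1)^{j+k+1}\frac{(2N-1)!}{j!k!(2N-j-k-1)!}\,\zeta^{N-k-1}\bar\zeta^{N-j-1},
\]
and simply observes that in every term at least one of the exponents lies in $\{0,\ldots,N-1\}$, so $\Delta_\zeta^N=4^N\partial_\zeta^N\bar\partial_\zeta^N$ annihilates it. Centering at the pole replaces your infinite negative-binomial series and finite-difference identity with a finite trinomial expansion and an exponent-counting observation, which is shorter; your approach, on the other hand, works directly on $U_{j,N}$ without the preliminary Almansi reduction and avoids any change of variables.
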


\begin{proof}
By inspection, the function $U_{0,N}(z)=(1-|z|^2)^{N-1}$ is $N$-harmonic.
As for the other kernels $U_{j,N}$ with $1\le j\le N$, the identity 
\eqref{eq-relUNk} together with the Almansi representation \eqref{eq-Almansi1}
shows that it is enough to know that $U_{j,j}$ is $j$-harmonic. 
Flipping variables, we just need to show that $U_{N,N}$ is $N$-harmonic. 
To this end, we change variables to $\zeta=1-z$ and see from the binomial 
theorem that
\begin{gather*}
U_{N,N}(1-\zeta)=\frac{(\zeta+\bar\zeta
-\zeta\bar\zeta)^{2N-1}}{\zeta^N\bar\zeta^N}
=\sum_{j,k}(-1)^{j+k+1}\frac{(2N-1)!}{j!k!(2N-j-k-1)!}
\zeta^{N-k-1}\bar\zeta^{N-j-1},
\end{gather*}
where $j,k$ range over integers with $j,k\ge0$ and $j+k\le 2N-1$. 
It follows that $U_{N,N}$ is $N$-harmonic for $\zeta\ne0$, because in every 
term of the above sum, we have either $0\le N-k-1\le N-1$ or 
$0\le N-j-1\le N-1$,
or both. An application of $\Delta_\zeta^N=
4^N\partial_\zeta^N\bar\partial_\zeta^N$ to each term of the finite sum
then then results in $0$, as needed. 
\end{proof}

%
\medskip

We recall the definition of the functions $b_{j,N}(p)$ from \eqref{eq-bj}
and \eqref{eq-b0}.

\begin{lem}
$(0<p<+\infty)$ For $N=1,2,3\ldots$, $j=0,\ldots,N$, and real $\alpha$,
we have that
\[
U_{j,N}\in\mathrm{PH}^p_{N,\alpha}(\D)
\,\,\,\Longleftrightarrow\,\,\, \alpha>b_{j,N}(p).
\]
\label{lem-6.2}
\end{lem}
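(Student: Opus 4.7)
The plan is to reduce the membership question directly to the integral computed in Lemma \ref{lem-int}. Since Lemma \ref{lem-6.1} already guarantees that $U_{j,N}$ is $N$-harmonic on $\C\setminus\{1\}$, and the only potential singularity at $z=1$ lies on $\Te$, not in $\D$, the function $U_{j,N}$ is automatically in $\mathrm{PH}_N(\D)$. Therefore the claim $U_{j,N}\in\mathrm{PH}^p_{N,\alpha}(\D)$ is equivalent to the integrability condition $U_{j,N}\in L^p_\alpha(\D)$, i.e.
\[
\int_\D |U_{j,N}(z)|^p(1-|z|^2)^{\alpha}\,\diff A(z)
=\int_\D\frac{(1-|z|^2)^{p(N+j-1)+\alpha}}{|1-z|^{2jp}}\,\diff A(z)
=I\bigl(p(N+j-1)+\alpha,\,jp\bigr)<+\infty.
\]

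Next I would split into two cases according to whether $j=0$ or $j\geq 1$, since $b_{j,N}$ is defined piecewise and Lemma \ref{lem-int} treats $b=0$ separately from $b>0$. For $j=0$ one has $b=0$, and Lemma \ref{lem-int} gives finiteness iff $a=p(N-1)+\alpha>-1$, which is precisely $\alpha>-1-(N-1)p=b_{0,N}(p)$. For $j\geq 1$ one has $b=jp>0$, so Lemma \ref{lem-int} demands simultaneously $a>-1$ and $a>2(b-1)$. Translating back: $a>-1$ becomes $\alpha>-1-(N+j-1)p$, while $a>2(jp-1)$ becomes $\alpha>-2+(j-N+1)p$. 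Taking the maximum of these two lower bounds yields exactly $\alpha>b_{j,N}(p)$ as in \eqref{eq-bj}.

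The proof in the failure direction is then automatic from Lemma \ref{lem-int}: whenever $\alpha\le b_{j,N}(p)$, at least one of the two conditions for finiteness breaks down, so the integral diverges and $U_{j,N}\notin L^p_\alpha(\D)$. I do not expect any serious obstacle here; the whole result is really a bookkeeping translation between the integrability thresholds of $I(a,b)$ and the piecewise-affine functions $b_{j,N}$. The only point that requires minor care is to make sure one verifies that the $\alpha=b_{j,N}(p)$ case itself is excluded (strict inequality), which follows because both conditions in Lemma \ref{lem-int} are strict and one of them is active on each affine piece defining $b_{j,N}$.
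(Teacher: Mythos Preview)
Your proposal is correct and follows essentially the same approach as the paper: both reduce the membership question to the finiteness of the integral $I(a,b)$ from Lemma~\ref{lem-int} with $a=(N+j-1)p+\alpha$ and $b=jp$, split into the cases $j=0$ and $j\ge1$, and translate the resulting inequalities into $\alpha>b_{j,N}(p)$. Your write-up is slightly more explicit about the failure direction and the borderline case $\alpha=b_{j,N}(p)$, but the argument is the same.
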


\begin{proof}
To decide when $U_{j,N}\in\mathrm{PH}^p_{N,\alpha}(\D)$, we calculate:
\begin{equation}
\|U_{j,N}\|_{p,\alpha}^p=\int_{\D}|U_{j,N}|^p(1-|z|^2)^\alpha\diff A=
\int_\D\frac{(1-|z|^2)^{(N+j-1)p+\alpha}}{|1-z|^{2jp}}\diff A(z).
\end{equation}
By Lemma \ref{lem-int}, we have the following. For $j=0$, 
$U_{j,N}=U_{0,N}\in\mathrm{PH}^p_{N,\alpha}(\D)$ if and only if  
$(N-1)p+\alpha>-1$.
For $1\le j\le N$, however, 
$U_{j,N}\in\mathrm{PH}^p_{N,\alpha}(\D)$ if and only if  
both $(N+j-1)p+\alpha>-1$ and $(N+j-1)p+\alpha>2(jp-1)$.
After some trivial algebraic manipulations, the assertion of the lemma is 
now immediate.
\end{proof}

So, if $\alpha$ meets
\[
\alpha>\min_{j:0\le j\le N}b_{j,N}(p),
\]
then one of the functions $U_{0,N},U_{1,N},\ldots,U_{N,N}$ will be in
$\mathrm{PH}^p_{N,\alpha}(\D)$, so that in particular, 
\[
\mathrm{PH}^p_{N,\alpha}(\D)\ne\{0\}.
\]
It is quite remarkable that this criterion is also necessary for 
$\mathrm{PH}^p_{N,\alpha}(\D)\ne\{0\}$, as Theorem \ref{thm-main2} says.
The proof will be supplied in Section \ref{sec-applcellul}.



\section{The structure of polyharmonic functions: the cellular
decomposition}
\label{sec-7}

\subsection{The basic properties of the partial differential operators 
$\Lop_\theta$}
We recall that $\Lop_\theta$ is the partial differential operator given by
\eqref{eq-LN-0}, 
\[
\Lop_{\theta}[u](z)=
(1-|z|^2)\Delta u(z)+4\theta[z\partial_z u(z)+\bar z\bar\partial_z u(z)]
-4\theta^2u(z),
\]
and that $\Mop$ is the operator of multiplication by $1-|z|^2$.
The basic operator identities satisfied by $\Lop_\theta$ are the following:
\begin{equation}
\Delta \Lop_{\theta}=\Lop_{\theta-1}\Delta,
\label{eq-operiden0}
\end{equation}
and
\begin{equation}
\Lop_{\theta}\Mop=\Mop\Lop_{\theta-1}-8\theta \Iop,
\label{eq-operiden1}
\end{equation}
where $\Iop$ is the identity operator.
To obtain \eqref{eq-operiden0}, we calculate as follows:
\begin{multline*}
\Delta\Lop_\theta[u](z)=\Delta\big[(1-|z|^2)\Delta u(z)+
4\theta[z\partial_zu(z)+\bar z\bar\partial_zu(z)]
-4\theta^2u(z)\big]
\\
=(1-|z|^2)\Delta ^2u(z)-4z\partial_z \Delta u(z)-
4\bar z\bar\partial_z\Delta u(z)-4\Delta u(z)
\\
+4\theta[2\Delta u(z)+z\partial_z\Delta u(z)+\bar z\bar\partial_z\Delta u(z)]
-4\theta^2\Delta u(z)
\\
=(1-|z|^2)\Delta^2 u(z)+4(\theta-1)[z\partial_z\Delta u(z)
+\bar z\bar\partial_z\Delta u(z)]-4(\theta-1)^2\Delta u(z) 
=\Lop_{\theta-1}\Delta u(z).
\end{multline*}
To instead obtain \eqref{eq-operiden1}, we calculate somewhat analogously:
\begin{multline*}
\Lop_\theta\Mop[u](z)=(1-|z|^2)\Delta[(1-|z|^2)u(z)]+
4\theta[z\partial_z((1-|z|^2)u(z))+\bar z\bar\partial_z((1-|z|^2)u(z))]
\\
-4\theta^2(1-|z|^2)u(z)=(1-|z|^2)[(1-|z|^2)\Delta u(z)-4z\partial_z u(z)
-4\bar z\bar\partial_z u(z)-4u(z)]
\\
+4\theta[-2|z|^2u(z)+(1-|z|^2)(z\partial_zu(z)+\bar z\bar\partial_zu(z))]
-4\theta^2(1-|z|^2)u(z)
\\
=(1-|z|^2)^2\Delta u(z)+4(\theta-1)(1-|z|^2)[z\partial_z u(z)
+\bar z\bar\partial_z u(z)]-4(\theta-1)^2(1-|z|^2)u(z) 
\\
-8\theta u(z)
=(1-|z|^2)\Lop_{\theta-1}[u](z)-8\theta u(z).
\end{multline*}
By iteration of the operator identity \eqref{eq-operiden0}, we obtain more
generally that
\begin{equation}
\Delta^j \Lop_{\theta}=\Lop_{\theta-j}\Delta^j,\qquad j=0,1,2,\ldots.
\label{eq-operiden0'}
\end{equation}
If we instead iterate \eqref{eq-operiden1}, we find 
the following
operator identity:
\begin{equation}
\Lop_\theta\Mop^j=\Mop^j\Lop_{\theta-j}+4j(j-1-2\theta)\Mop^{j-1},
\qquad j=0,1,2,\ldots.
\label{eq-operiden2}
\end{equation}

\begin{prop}
We have the following factorization:
\[
\Lop_0\Lop_1\cdots\Lop_{n-1}=\Mop^n\Delta^n,\qquad n=1,2,3,\ldots.
\]
\label{prop-basicoe1}
\end{prop}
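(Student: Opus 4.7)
The plan is a short induction on $n$ using the two operator identities already established just above the proposition, namely the commutation relation \eqref{eq-operiden0'} and the trivial observation that $\Lop_0=\Mop\Delta$ (which is immediate from the definition \eqref{eq-LN-0} by setting $\theta=0$, killing the middle and last terms).

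For the base case $n=1$, we simply read off $\Lop_0=\Mop\Delta$ from \eqref{eq-LN-0} with $\theta=0$.

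For the inductive step, assume $\Lop_0\Lop_1\cdots\Lop_{n-1}=\Mop^n\Delta^n$. Then
\[
\Lop_0\Lop_1\cdots\Lop_{n-1}\Lop_n=\Mop^n\Delta^n\Lop_n,
\]
and we push $\Lop_n$ through $\Delta^n$ by \eqref{eq-operiden0'} with $\theta=n$, $j=n$, obtaining $\Delta^n\Lop_n=\Lop_0\Delta^n$. Substituting $\Lop_0=\Mop\Delta$ gives
\[
\Mop^n\Delta^n\Lop_n=\Mop^n\Lop_0\Delta^n=\Mop^n(\Mop\Delta)\Delta^n=\Mop^{n+1}\Delta^{n+1},
\]
which is the desired identity for $n+1$.

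There is no serious obstacle here; the key observation, and the only one that might look non-obvious, is that the commutation relation \eqref{eq-operiden0'} lets $\Lop_n$ slide all the way from the right of $\Delta^n$ to the left and arrive as $\Lop_0$, precisely because the index decreases by exactly $n$. Note in particular that the second operator identity \eqref{eq-operiden1}, which involves the constant term $-8\theta\Iop$, is not needed for this factorization, which is what makes the product collapse so cleanly.
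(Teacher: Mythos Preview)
Your proof is correct and essentially identical to the paper's own argument: both use induction with base case $\Lop_0=\Mop\Delta$, and in the inductive step apply the commutation relation \eqref{eq-operiden0'} to rewrite $\Delta^n\Lop_n$ as $\Lop_0\Delta^n=\Mop\Delta^{n+1}$. Your closing remark that \eqref{eq-operiden1} is not needed here is a nice observation, though not present in the paper.
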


\begin{proof}
We argue by induction. Since $\Lop_0=\Mop\Delta$, the assertion holds 
trivially for $n=1$.
Next, suppose we have established the identity for $n=n_0\ge1$, that is,
\[
\Lop_0\Lop_1\cdots\Lop_{n_0-1}=\Mop^{n_0}\Delta^{n_0}
\]
holds. We then see that
\[
\Lop_0\Lop_1\cdots\Lop_{n_0}=\Mop^{n_0}\Delta^{n_0}\Lop_{n_0}
=\Mop^{n_0}\Lop_0\Delta^{n_0}=\Mop^{n_0+1}\Delta^{n_0+1},
\]
where we used the iterated operator identity \eqref{eq-operiden0'} and that
$\Lop_0=\Mop\Delta$. The proof is complete.
\end{proof}

\begin{cor}
Fix a positive integer $n$. 
If $v$ solves $\Lop_{n-1}[v]=0$ in $\D$, then $v$ is $n$-harmonic in $\D$.
\label{cor-imp}
\end{cor}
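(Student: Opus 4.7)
The plan is to deduce this as an essentially immediate consequence of Proposition \ref{prop-basicoe1}. The factorization
\[
\Lop_0\Lop_1\cdots\Lop_{n-1}=\Mop^n\Delta^n
\]
is an identity of compositions of operators, in which $\Lop_{n-1}$ appears rightmost and is therefore applied first when acting on a function. So if $v$ satisfies $\Lop_{n-1}[v]=0$ on $\D$, I would simply apply $\Lop_0\Lop_1\cdots\Lop_{n-2}$ to both sides of this equation to obtain
\[
\Mop^n\Delta^n[v]=\Lop_0\Lop_1\cdots\Lop_{n-1}[v]=\Lop_0\Lop_1\cdots\Lop_{n-2}\bigl[\Lop_{n-1}[v]\bigr]=0
\]
on $\D$.

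The final step is to cancel the factor $\Mop^n$. Since $\Mop^n$ is just multiplication by $(1-|z|^2)^n$, which is strictly positive throughout $\D$, the equation $(1-|z|^2)^n\Delta^n v(z)=0$ forces $\Delta^n v(z)=0$ for every $z\in\D$. Hence $v$ is $n$-harmonic, as claimed.

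There is essentially no obstacle here; the only subtlety worth flagging is that the operators $\Lop_j$ do not commute in general (and one should not pretend otherwise), so the argument relies on the specific left-to-right ordering in the factorization of Proposition \ref{prop-basicoe1}, which places $\Lop_{n-1}$ in the position of being applied first. If one wanted the distributional version, the same reasoning applies since each $\Lop_j$ is a differential operator with smooth (in fact polynomial) coefficients, so composition with such operators preserves the identity $\Lop_{n-1}[v]=0$ in the sense of distributions.
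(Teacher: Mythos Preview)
Your proof is correct and is precisely the intended argument: the paper presents this as an immediate corollary of Proposition~\ref{prop-basicoe1} without spelling out a proof, and the reasoning you give (apply the factorization with $\Lop_{n-1}$ acting first, then divide by the nonvanishing factor $(1-|z|^2)^n$) is exactly what is meant.
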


\begin{cor}
Fix a positive integer $n$. 
If $v$ is $n$-harmonic in $\D$, then $\Lop_{n-1}[u]$ is $(n-1)$-harmonic.
If $n=1$, this should be interpreted as $\Lop_0[u]=0$.
\label{cor-imp2}
\end{cor}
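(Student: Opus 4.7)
The plan is to deduce Corollary \ref{cor-imp2} directly from the iterated operator identity \eqref{eq-operiden0'}, namely $\Delta^j \Lop_\theta = \Lop_{\theta-j}\Delta^j$. The key observation is that specializing $\theta = n-1$ and $j = n-1$ yields
\[
\Delta^{n-1}\Lop_{n-1} = \Lop_0 \Delta^{n-1} = \Mop\Delta\cdot\Delta^{n-1} = \Mop\Delta^n,
\]
where in the middle step we used the definition $\Lop_0 = \Mop\Delta$, which is immediate from the formula \eqref{eq-LN-0} with $\theta = 0$.

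With this identity in hand, the proof is a one-line computation. If $u$ is $n$-harmonic, i.e.\ $\Delta^n u = 0$, then
\[
\Delta^{n-1}\Lop_{n-1}[u] = \Mop\Delta^n[u] = \Mop[0] = 0,
\]
so $\Lop_{n-1}[u]$ is annihilated by $\Delta^{n-1}$, which is precisely the statement that $\Lop_{n-1}[u]$ is $(n-1)$-harmonic.

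For the special case $n = 1$, the formula $\Lop_0 = \Mop\Delta$ shows that $\Lop_0[u] = (1-|z|^2)\Delta u$, which vanishes identically whenever $u$ is harmonic; this matches the convention that $(n-1)$-harmonicity at $n=1$ should be read as $\Lop_0[u] = 0$, as stated in the corollary. There is no real obstacle here; the only thing to verify carefully is the correct index match ($\theta = j = n-1$) when invoking \eqref{eq-operiden0'}, and the base case $n=1$ is handled by the same identity.
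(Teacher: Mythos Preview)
Your proof is correct. The paper does not supply an explicit proof of this corollary, but places it immediately after Proposition \ref{prop-basicoe1} (the factorization $\Lop_0\Lop_1\cdots\Lop_{n-1}=\Mop^n\Delta^n$), suggesting the intended argument is: if $\Delta^n u=0$ then $\Lop_0\cdots\Lop_{n-1}[u]=0$, and since $\Lop_0\cdots\Lop_{n-2}=\Mop^{n-1}\Delta^{n-1}$ with $\Mop$ invertible on $\D$, one concludes $\Delta^{n-1}\Lop_{n-1}[u]=0$. Your route via \eqref{eq-operiden0'} with $\theta=j=n-1$ and the observation $\Lop_0=\Mop\Delta$ is essentially the same mechanism --- both reduce to $\Delta^{n-1}\Lop_{n-1}=\Mop\Delta^n$ modulo invertibility of $\Mop$ --- and is in fact slightly more direct, since you avoid invoking the full product factorization.
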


\subsection{Mapping properties of $\Lop_\theta$}
We need to understand what space $\Lop_\theta$ maps $\mathrm{PH}^p_{N,\alpha}(\D)$
into.

\begin{prop}
$(0<p<+\infty)$
Suppose $u\in\mathrm{PH}^p_{N,\alpha}(\D)$, for some integer 
$N=1,2,3,\ldots$ and a real parameter $\alpha$. Then, for $\theta$ real, we
have $\Lop_{\theta}[u]\in\mathrm{PH}^p_{N,\alpha+p}(\D)$. 
\label{prop-7.7}
\end{prop}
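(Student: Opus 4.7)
The plan is to dissect $\Lop_\theta[u]$ into its three natural pieces and estimate each one using the results already established, then handle the $N$-harmonicity separately via an operator identity.

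First, I would verify that $\Lop_\theta[u]$ is $N$-harmonic. This is immediate from the iterated identity \eqref{eq-operiden0'}: applying $\Delta^N$ to $\Lop_\theta[u]$ gives $\Delta^N\Lop_\theta[u]=\Lop_{\theta-N}[\Delta^N u]=0$, since $\Delta^N u=0$.

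Next, I would handle the weighted integrability term by term. Write
\[
\Lop_\theta[u]=\Mop[\Delta u]+4\theta\bigl(z\partial_z u+\bar z\bar\partial_z u\bigr)-4\theta^2 u,
\]
where $\Mop$ denotes multiplication by $1-|z|^2$. For the first term, Corollary \ref{cor-4} (with $j=k=1$) gives $\partial_z\bar\partial_z u\in\mathrm{PH}^p_{N,\alpha+2p}(\D)$, hence $\Delta u=4\partial_z\bar\partial_z u\in L^p_{\alpha+2p}(\D)$. Then by \eqref{eq-multspaces1}, $\Mop[\Delta u]\in L^p_{\alpha+p}(\D)$. For the middle term, Corollary \ref{cor-3} provides $\partial_z u,\bar\partial_z u\in\mathrm{PH}^p_{N,\alpha+p}(\D)\subset L^p_{\alpha+p}(\D)$; since $|z|\le 1$ on $\D$, the factors $z$ and $\bar z$ preserve the $L^p_{\alpha+p}(\D)$ norm (in fact reduce it). Finally, for the zeroth order term, the inclusion \eqref{eq-contain1} (applicable since $\alpha<\alpha+p$) gives $u\in L^p_{\alpha}(\D)\subset L^p_{\alpha+p}(\D)$.

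Combining the three bounds via the triangle inequality (for $p\ge1$) or the quasi-triangle inequality $|a+b+c|^p\le|a|^p+|b|^p+|c|^p$ (for $0<p<1$) shows $\Lop_\theta[u]\in L^p_{\alpha+p}(\D)$. Together with $N$-harmonicity, this places $\Lop_\theta[u]$ in $\mathrm{PH}^p_{N,\alpha+p}(\D)$.

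There is essentially no obstacle here: the proposition is a direct bookkeeping consequence of the Hardy–Littlewood ellipticity corollaries from Section \ref{sec-4}, which already absorb the analytic difficulty of controlling derivatives of polyharmonic functions in weighted $L^p$ for $0<p<1$. The only point worth being explicit about is the shift of the weight by $+p$ for each derivative, matched by the extra factor $1-|z|^2$ that accompanies $\Delta u$ in the definition of $\Lop_\theta$.
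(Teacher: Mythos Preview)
Your proof is correct and follows essentially the same approach as the paper: decompose $\Lop_\theta[u]$ into its three pieces and control each via the Hardy--Littlewood ellipticity corollaries (Corollaries \ref{cor-3} and \ref{cor-4}), together with the obvious weight shifts. The only cosmetic difference is how $N$-harmonicity is handled: the paper verifies that each summand ($\Mop[\Delta u]$, $z\partial_z u$, $\bar z\bar\partial_z u$, $u$) lies in $\mathrm{PH}^p_{N,\alpha+p}(\D)$ separately, whereas you dispatch the $N$-harmonicity of the full expression in one line via the commutation identity $\Delta^N\Lop_\theta=\Lop_{\theta-N}\Delta^N$ from \eqref{eq-operiden0'}.
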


\begin{proof}
In view of Corollary \ref{cor-4}, we have
$\partial_z u,\bar\partial_z u\in\mathrm{PH}^p_{N,\alpha+p}(\D)$
and 
$$
\Delta u=4\partial_z\bar\partial_z u\in\mathrm{PH}^p_{N-1,\alpha+2p}(\D).
$$
Here, we used that $\Delta u$ is $(N-1)$-harmonic, which is immediate
because $u$ is $N$-harmonic. We remark that as a consequence, the function
$\Mop\Delta u(z)=(1-|z|^2)\Delta u(z)$ is $N$-harmonic, and then
$\Mop\Delta u\in\mathrm{PH}^p_{N,\alpha+p}(\D)$.
The assertion now follows.
\end{proof}

\subsection{The work of Olofsson}
Given a function $v$ on $\D$, For $0<r<1$, we let $v_r$ denote its 
{\em dilate}, the function on $\Te$ given by $v_r(\zeta):=v(r\zeta)$.
Recently, Olofsson obtained the following result \cite{Olof2}, which is 
somewhat analogous to the classical theory of axially symmetric potentials
due to Weinstein et al. (see, e.g., \cite{Wein}).

\begin{prop}
$(0<p<+\infty,\,-\frac12<\theta<+\infty)$  
For a continuous function $v:\D\to\C$, the following are equivalent: 

\noindent{\rm(i)} The function $v$ solves 
$\Lop_\theta[v]=0$ on $\D$, and its dilates $v_r$ converge to 
a distribution as $r\to1^-$, and

\noindent{\rm(ii)} The function $v$ is of the form 
\[
v(z)=\frac{1}{2\pi}\int_{\Te}
\frac{(1-|z|^2)^{2\theta+1}}{|1-z\bar\xi|^{2\theta+2}}f(\xi)\diff s(\xi),
\qquad z\in\D,\]
for some distribution $f$ on $\Te$, where the integral is understood in the 
sense of distribution theory. 
\label{prop-olof2}
\end{prop}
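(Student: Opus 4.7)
The plan is to establish the two implications by working angular-mode by angular-mode, exploiting the rotational symmetry of $\Lop_\theta$.

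For \emph{(ii) $\Rightarrow$ (i)}, first I would check that for each fixed $\xi\in\Te$, the kernel $K_\theta(z,\xi):=(1-|z|^2)^{2\theta+1}|1-z\bar\xi|^{-2\theta-2}$ is a classical solution of $\Lop_\theta[K_\theta(\cdot,\xi)]=0$ on $\D$. By the rotation invariance of $\Lop_\theta$ it suffices to treat $\xi=1$, where a direct calculation, conveniently performed using the divergence-form expression
\[
\Lop_\theta[u]=(1-|z|^2)^{2\theta+1}\nabla\cdot\{(1-|z|^2)^{-2\theta}\nabla u\}-4\theta^2u
\]
recorded earlier in the paper, does the job. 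Since $\Lop_\theta$ commutes with the pairing against a distribution $f$ on $\Te$, the integral in (ii) yields a solution of $\Lop_\theta[v]=0$ in the sense of distributions, and hence classically by elliptic regularity in the interior. That the dilates $v_r$ converge to a constant multiple of $f$ as $r\to 1^-$ will follow from an approximate-identity property of $K_\theta(\cdot,\xi)$: this nonnegative kernel concentrates at $\xi$ as $|z|\to 1^-$, and an explicit Beta-function evaluation (in the spirit of Lemma \ref{lem-int}) identifies the normalizing constant as $2\pi c(\theta)$.

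For \emph{(i) $\Rightarrow$ (ii)}, the strategy is Fourier decomposition in the angular variable. Writing $v(r\mathrm{e}^{\imag\phi})=\sum_{n\in\mathbb Z}a_n(r)\mathrm{e}^{\imag n\phi}$ and using $\Delta=\partial_r^2+r^{-1}\partial_r+r^{-2}\partial_\phi^2$ together with $z\partial_z+\bar z\bar\partial_z=r\partial_r$, the PDE $\Lop_\theta[v]=0$ separates into the family of ODEs
\begin{equation}
(1-r^2)\bigl[a_n''(r)+r^{-1}a_n'(r)-n^2r^{-2}a_n(r)\bigr]+4\theta r a_n'(r)-4\theta^2a_n(r)=0,\qquad 0<r<1.
\label{eq-olofODE}
\end{equation}
This is a second-order linear ODE with regular singularities at both endpoints. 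Since $v$ is continuous on all of $\D$, Frobenius analysis at $r=0$ forces $a_n(r)$ to be a scalar multiple of the unique (up to normalization) solution $h_n(r)$ of \eqref{eq-olofODE} that is bounded near $r=0$, with leading behaviour $r^{|n|}$. In parallel, I would expand the kernel $K_\theta(z,1)$ in Fourier series as $K_\theta(r\mathrm{e}^{\imag\phi},1)=\sum_n \widetilde h_n(r)\mathrm{e}^{\imag n\phi}$ by multiplying the binomial series for $|1-z|^{-2\theta-2}$ by $(1-r^2)^{2\theta+1}$, and verify that each $\widetilde h_n$ solves \eqref{eq-olofODE} and is regular at the origin, hence coincides with a multiple of $h_n$. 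Defining $c_n:=a_n(r)/\widetilde h_n(r)$ (a constant in $r$ by uniqueness of the regular solution), the formal series $f:=\sum_n c_n\mathrm{e}^{\imag n\phi}$ inserted into (ii) reproduces $v$. The distributional convergence of $v_r$ hypothesized in (i) guarantees the polynomial growth $|c_n|=O(|n|^M)$ needed to interpret $f$ as a genuine distribution on $\Te$.

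The main obstacle will be the precise analysis of \eqref{eq-olofODE} near $r=1$ and the identification of its regular solution with the Fourier coefficients of $K_\theta(\cdot,1)$. This amounts to recognizing \eqref{eq-olofODE} as (a disguise of) the hypergeometric equation, picking out the correct Frobenius exponents at $r=0$ and $r=1$, and matching asymptotic expansions; in particular, one must control the growth rate of $h_n(r)$ as $r\to 1^-$ well enough both to convert the distributional convergence hypothesis in (i) into polynomial bounds on $c_n$, and conversely to justify the convergence of the Fourier series defining the integral in (ii) for arbitrary distributions $f$. This hypergeometric calculation is where the real work of the proof sits.
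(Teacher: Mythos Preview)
Your outline is reasonable, but there is a basic mismatch with the paper: Proposition~\ref{prop-olof2} is not proved in the paper at all. It is quoted as a result of Olofsson~\cite{Olof2} (``Recently, Olofsson obtained the following result''), and the paper's only use of it is as input for the short derivation of Proposition~\ref{prop-olof1}. So there is no ``paper's own proof'' to compare against; you have sketched a proof of a cited result.

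That said, the approach you describe---separation of variables via the Fourier decomposition $v(r\mathrm{e}^{\imag\phi})=\sum_n a_n(r)\mathrm{e}^{\imag n\phi}$, reduction to a hypergeometric-type ODE in $r$, Frobenius selection of the solution regular at $r=0$, and identification of that solution with the Fourier coefficients of the kernel $K_\theta$---is exactly the method Olofsson uses in~\cite{Olof2}. Your identification of the ``real work'' (the asymptotics of the regular solution near $r=1$, needed to pass between distributional convergence of dilates and polynomial growth of the coefficients $c_n$) is accurate; this is where the hypergeometric connection formulae enter. One small point: in your direction (ii)~$\Rightarrow$~(i), you should also argue that the integral against a distribution $f$ actually defines a continuous function on~$\D$ (not just a distributional solution), which again comes down to the smoothness of $\xi\mapsto K_\theta(z,\xi)$ for fixed $z\in\D$.
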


We now indicate how to derive Proposition \ref{prop-olof1} from the above
Proposition \ref{prop-olof2}.

\begin{proof}[Proof of Proposition \ref{prop-olof1}]
We realize that Proposition \ref{prop-olof1} follows from Proposition 
\ref{prop-olof2} once we know that adding the following requirements in 
the setting of Proposition \ref{prop-olof2}(i) forces the function $v$ to have
convergent dilates $v_r$ as $r\to 1^-$ in the sense of distribution theory:
(1) $\theta=n-1$ is a nonnegative integer, and (2) $v\in L^p_\alpha(\D)$. 
First, we observe that since 
$\Lop_{n-1}[v]=\Lop_\theta[v]=0$
is assumed to hold, we know from Corollary \ref{cor-imp} that
$v$ is $n$-harmonic, so that $v\in\mathrm{PH}^p_{n,\alpha}(\D)$. 
Next, by suitably iterating Corollary \ref{cor-5} and by combining the result 
with the pointwise bound of Corollary \ref{cor-2.6}, we realize that all the 
harmonic functions $u_j$ in the Almansi representation of  
$v\in\mathrm{PH}^p_{n,\alpha}(\D)$,
\[
u(z)=u_0(z)+|z|^2u_1(z)+\cdots+|z|^{2n-2}u_{n-1}(z),
\]
have the growth bound 
\[
|u_j(z)|=\mathrm{O}\big((1-|z|)^{-\Lambda}\big)\quad
\text{as}\,\,\,|z|\to1^-,
\]
for some big positive constant $\Lambda$ (which may depend on $n,p,\alpha$). 
By a standard result in distribution theory, this means that each $u_j$ is 
the Poisson integral of a distribution $f_j$ on $\Te$, and that the dilates 
$u_{j,r}(\zeta):=u_j(r\zeta)$ (for $\zeta\in\Te$) converge to a nonzero
constant multiple of $f_j$ in the sense of distribution theory as $r\to1^-$.
Then the dilates of $u_r$ of $u$ converge 
in the sense of 
distribution theory, as needed.
\end{proof}

\subsection{The proof of the cellular decomposition theorem}

We turn to the proof of Theorem \ref{thm-entamain2}.

\begin{proof}[Proof of Theorem \ref{thm-entamain2}]
\textsc{Uniqueness}. We first treat the uniqueness part. We have the equation
\begin{equation}
w_0+\Mop[w_1]+\cdots+\Mop^{N-1}[w_{N-1}]=0,
\label{eq-uniq1}
\end{equation}
where the functions $w_j$ are $(N-j)$ harmonic and solve the partial 
differential equation 
\newline\noindent 
$\Lop_{N-j-1}[w_j]=0$
on $\D$. We need to show that all the functions $w_j$ vanish, where 
$j=0,\ldots,N-1$. We resort to an induction argument. Clearly, when $N=1$, 
the equation \eqref{eq-uniq1} just says $w_0=0$, as needed. Next, in 
the induction step we suppose that uniqueness
holds for $N=N_0$, and intend to demonstrate that it must then also hold for
$N=N_0+1$. The equation \eqref{eq-uniq1} with $N=N_0+1$ reads
\begin{equation*}
\sum_{j=0}^{N_0}\Mop^j[w_j]=0,
\end{equation*} 
where the functions $w_j$ solve $\Lop_{N_0-j}[w_j]=0$. We now apply the 
operator $\Lop_{N_0}$ to both sides,
and rewrite the equation using \eqref{eq-operiden2}:
\begin{equation*}
\sum_{j=0}^{N_0}\big\{\Mop^j\Lop_{N_0-j}[w_j]+4j(j-2N_0-1)\Mop^{j-1}[w_j]\big\}=0.
\end{equation*} 
If we use the given information that the functions $w_j$ solve 
$\Lop_{N_0-j}[w_j]=0$, the above equation simplifies pleasantly:
\begin{equation*}
\sum_{j=0}^{N_0-1}(j+1)(j-2N_0)\Mop^{j}[w_{j+1}]=0.
\end{equation*} 
By introducing the functions $\tilde w_j:=(j+1)(j-2N_0)w_{j+1}$, the equation 
simplifies further: 
\begin{equation*}
\sum_{j=0}^{N_0-1}\Mop^{j}[\tilde w_{j}]=0.
\end{equation*} 
We observe that $\Lop_{N_0-j-1}[\tilde w_j]=0$, and we are in the setting of 
$N=N_0$, and by the induction hypothesis, we have that $\tilde w_j=0$ for
all $j=0,\ldots,N_0-1$. As a consequence, $w_j=0$ for all $j=1,\ldots,N_0$,
and $w_0=0$ is then immediate from \eqref{eq-uniq1}. 
This completes the uniqueness part of the proof.

\textsc{Existence}. Next, we turn to the existence part.
We argue by induction in $N$. We first observe that the assertion is trivial 
for $N=1$. In the induction step, we assume that the assertion of the 
theorem holds for $N=N_0\ge1$, and attempt to show that it must then also 
hold for $N=N_0+1$. 
The function $u$ is now $(N_0+1)$-harmonic, and and we form the
associated function
$\Lop_{N_0}[u]$, which is then $N_0$-harmonic, by Corollary \ref{cor-imp2}.
Since $u\in\mathrm{PH}^p_{N_0+1,\alpha}(\D)$, Proposition \ref{prop-7.7} gives that
$\Lop_{N_0}[u]\in\mathrm{PH}^p_{N_0,\alpha+p}(\D)$. By the induction hypothesis,
then, we know that
\[
\Lop_{N_0}[u]=\sum_{j=0}^{N_0-1}\Mop^j[h_j],
\]
where $h_j$ are $(N_0-j)$-harmonic and solve $\Lop_{N_0-j-1}[h_j]=0$.
Moreover, again by the induction hypothesis, we have 
$\Mop^j[h_j]\in\mathrm{PH}^p_{N_0,\alpha+p}(\D)$, so that by 
\eqref{eq-multspaces2}, $h_j\in\mathrm{PH}^p_{N_0-j,\alpha+(1+j)p}(\D)$. We form 
the associated function
\[
H:=\frac14\sum_{j=0}^{N_0-1}\frac{1}{(j+1)(2N_0-j)}\Mop^{j+1}[h_j],
\]
and observe that $H\in\mathrm{PH}^p_{N_0+1,\alpha}(\D)$. So the sum $u+H$ is 
in $\mathrm{PH}^p_{N_0+1,\alpha}(\D)$, and we calculate that
\begin{multline*}
\Lop_{N_0}[u+H]=\Lop_{N_0}[u]+\Lop_{N_0}[H]=\sum_{j=0}^{N_0-1}\bigg\{\Mop^j[h_j]+
\frac{1}{4(j+1)(2N_0-j)}\Lop_{N_0}\Mop^{j+1}[h_j]\bigg\}
\\
=\sum_{j=0}^{N_0-1}\bigg\{\Mop^j[h_j]
+\frac{1}{4(j+1)(2N_0-j)}\big(\Mop^{j+1}\Lop_{N_0-j-1}[h_j]-4(j+1)(2N_0-j)
\Mop^{j}[h_j]\big)\bigg\}=0,
\end{multline*}
where we used the operator identity \eqref{eq-operiden2} and that 
$\Lop_{N_0-j-1}[h_j]=0$. 
So, with $w_0:=u+H$ and 
\[
w_j:=-\frac{1}{4j(2N_0-j+1)}h_{j-1},\qquad j=1,\ldots,N_0,
\]
we see that
\[
u=\sum_{j=0}^{N_0}\Mop^j[w_j],
\]
where $w_j$ is $(N_0+1-j)$-harmonic with $\Lop_{N_0-j}[w_j]=0$, for 
$j=0,\ldots,N_0$. Moreover, the given integrability properties of the functions 
$h_j$ lead to $w_j\in\mathrm{PH}^p_{N_0+1-j,\alpha+jp}(\D)$ and 
$\Mop^j[w_j]\in\mathrm{PH}^p_{N_0+1,\alpha}(\D)$. The existence of the asserted
expansion has now been obtained. The proof is complete.
\end{proof}

\section{Applications of the cellular decomposition}
\label{sec-applcellul}

\subsection{Triviality of individual terms in the cellular decomposition}
\label{subsec-triv}

We now analyze each term in the cellular decomposition separately.
We recall the definition of the functions $a_{j,N}(p)$ from Section 
\ref{sec-mainres}.

\begin{prop}
$(0<p<+\infty)$ Fix integers $N=1,2,3,\ldots$ and $j=0,\ldots,N-1$.
Suppose $u\in\mathrm{PH}^p_{N,\alpha}(\D)$ is of the form
$u=\Mop^j[w]$, where $w$ is $(N-j)$-harmonic and $\Lop_{N-j-1}[w]=0$. 
Then if $\alpha\le a_{N-j,N}(p)$, we have that $u=0$.
\label{prop-8.1}
\end{prop}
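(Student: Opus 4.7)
The plan is to translate the claim into a statement about $w$ alone and then reduce the vanishing of $w$ to that of a boundary distribution via the Olofsson representation. Setting $M := N - j$ and $\beta := \alpha + jp$, the equivalence \eqref{eq-multspaces2} rewrites $u = \Mop^j[w] \in \mathrm{PH}^p_{N,\alpha}(\D)$ as $w \in \mathrm{PH}^p_{M,\beta}(\D)$. Iterating the shift \eqref{eq-brel} gives $b_{M,N}(p) + jp = b_{M,M}(p)$, and the definition of $a$ then yields $a_{N-j,N}(p) + jp = \min\{b_{M,M}(p),\,-1\}$. So the hypothesis $\alpha \le a_{N-j,N}(p)$ becomes $\beta \le \min\{b_{M,M}(p),\,-1\}$, and it remains to prove that every $M$-harmonic function $w$ with $\Lop_{M-1}[w] = 0$ and $w \in L^p_\beta(\D)$ for such $\beta$ must vanish.

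First I would split on the location of the maximum defining $b_{M,M}(p) = \max\{-1-(2M-1)p,\,-2+p\}$. When $p \le 1/(2M)$, the first term dominates, so $\beta \le -1-(2M-1)p$, and Proposition \ref{prop-5.6} (with $M$ in place of $N$) immediately gives $\mathrm{PH}^p_{M,\beta}(\D) = \{0\}$ and thus $w = 0$. When $p > 1/(2M)$, $b_{M,M}(p) = -2+p$ and $\beta \le \min\{-2+p,\,-1\}$; in particular $\beta \le -1$ always, and $\beta \le p-2$ whenever $p < 1$. In this regime I would invoke Proposition \ref{prop-olof1} with $\theta = M-1$: $w$ is the Olofsson--Poisson integral of some distribution $f$ on $\Te$, and by the remark following that proposition, the dilates $w_r(\zeta) := w(r\zeta)$ converge in $\mathcal{D}'(\Te)$ to a nonzero scalar multiple of $f$ as $r \to 1^-$. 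Hence it suffices to produce a sequence $r_k \to 1^-$ along which $\int_\Te w_{r_k}\phi \diff s \to 0$ for every $\phi \in C^\infty(\Te)$, since this forces $\langle f,\phi\rangle = 0$ for all such $\phi$ and thus $w = 0$.

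To produce such a sequence, I would work with the polar form of the $L^p_\beta$-norm, $\int_0^1 r(1-r^2)^\beta G(r) \diff r < \infty$, where $G(r) := \int_\Te |w(r\zeta)|^p \diff s(\zeta)$. For $p \ge 1$ the condition $\beta \le -1$ already makes $(1-r)^\beta$ non-integrable near $r=1$, forcing $\liminf_{r\to 1^-} G(r) = 0$, and H\"older's inequality then yields the required sequence. For $1/(2M) < p < 1$ the H\"older route is unavailable, so I would combine the polar bound with the pointwise estimate $|w(z)| \le C(1-|z|)^{-(\beta+2)/p}\|w\|_{p,\beta}$ from Corollary \ref{cor-2.6} (with $(\beta+2)/p \le 1$ because $\beta \le p-2$) to obtain
\[
\int_\Te |w_r|\diff s \le \bigl(\sup_\zeta |w_r(\zeta)|\bigr)^{1-p} G(r) \le C(1-r)^{-\eta}G(r), \qquad \eta := (\beta+2)(1-p)/p.
\]
A short algebraic check shows that $\beta \le p-2$ is equivalent to $\eta + \beta + 1 \le 0$, which is precisely the condition for $(1-r)^{\eta+\beta}$ to be non-integrable near $r=1$. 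Rewriting $\int_0^1 (1-r)^\beta G(r)\diff r = \int_0^1 (1-r)^{\eta+\beta}\bigl[(1-r)^{-\eta}G(r)\bigr]\diff r < \infty$ then forces $\liminf_{r\to 1^-}(1-r)^{-\eta}G(r) = 0$, completing the sequential argument.

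The hard part will be handling the $p < 1$ subcase, where one must carefully stitch together the $L^p_\beta$ polar integrability of $w$ with the Hardy--Littlewood-type pointwise bound of Corollary \ref{cor-2.6}. The algebraic coincidence $\beta \le p-2 \Leftrightarrow \eta + \beta + 1 \le 0$ is what makes the subsequence argument succeed precisely in the regime $b_{M,M}(p) = -2+p$.
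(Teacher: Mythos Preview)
Your proof is correct, and in the regime $p>1/(2M)$ it takes a genuinely different route from the paper. After the same reduction to $M=N-j$, $\beta=\alpha+jp$ and the same use of Proposition~\ref{prop-5.6} for $p\le 1/(2M)$, the paper proceeds as follows when $p>1/(2M)$: since $\beta\le\min\{p-2,-1\}$, Proposition~\ref{prop-5.5} factors $w=\Mop[\tilde w]$ with $\tilde w$ $(M-1)$-harmonic in $\mathrm{PH}^p_{M-1,\beta+p}(\D)$; the cellular decomposition (Theorem~\ref{thm-entamain2}) applied to $\tilde w$ then gives $w$ a decomposition $\sum_{j\ge1}\Mop^j[\tilde g_j]$ with $\Lop_{M-j-1}[\tilde g_j]=0$, i.e.\ one with vanishing $j=0$ term. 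But $w$, satisfying $\Lop_{M-1}[w]=0$, is itself its own cellular decomposition concentrated entirely in the $j=0$ term, so uniqueness forces $w=0$.

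Your argument instead invokes the Olofsson representation (Proposition~\ref{prop-olof1}) and kills the boundary distribution $f$ directly, via the subsequential $L^1(\Te)$-decay of the dilates $w_r$. The $\liminf$ estimate you carry out is essentially the content of the \emph{proof} of Proposition~\ref{prop-5.5} (indeed you could have cited that proposition and then observed that $w=\Mop[\tilde w]$ forces $w_r=(1-r^2)\tilde w_r\to0$ in $\mathcal{D}'(\Te)$, since $\tilde w_r$ converges there). The paper's route stays entirely within the machinery developed in the paper and illustrates how the uniqueness half of the cellular decomposition does real work; your route trades that uniqueness for Olofsson's theorem and is more direct once the representation is in hand.
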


\begin{proof}
By \eqref{eq-multspaces2}, the function $w$ is in 
$\mathrm{PH}^p_{N-j,\alpha+jp}(\D)$. 
From 
\eqref{eq-arel}, we know that
\[
\alpha+jp\le a_{N-j,N}(p)+jp=a_{N-j,N-j}(p),
\]
so if we introduce $N':=N-j$ and $\alpha':=\alpha+jp$, it will be sufficient 
to show the following: {\em If $w\in\mathrm{PH}^p_{N',\alpha'}(\D)$ solves 
$\Lop_{N'-1}[w]=0$, and $\alpha'\le a_{N',N'}(p)$, then $w=0$}. 
We recall from \eqref{eq-bj} and \eqref{eq-aj} that
\[
a_{N',N'}(p)=\min\{b_{N',N'}(p),-1\},\qquad 
b_{N',N'}(p)=\min\{-1-(2N'-1)p,-2+p\},
\]
so that 
\[
a_{N',N'}(p)=b_{N',N'}(p)=-1-(2N'-1)p\quad\text{for}\,\,\,\,
0<p\le\frac{1}{2N'}.
\]
The assertion $w=0$ is immediate from Proposition \ref{prop-5.6} in case 
$0<p\le1/(2N')$. Next, we consider the remaining interval $1/(2N')<p<+\infty$.  
Since
\[
a_{N',N'}(p)=\min\{p-2,-1\}\quad\text{for}\,\,\,\,\frac{1}{2N'}<p<+\infty,
\]
the assumption $\alpha\le a_{N',N'}(p)$ entails in view of Proposition 
\ref{prop-5.5} that $w=\Mop[\tilde w]$, where $\tilde w$ is $(N'-1)$-harmonic,
which means that $\tilde w=0$ if $N'=1$. 
In view of Theorem \ref{thm-entamain2}, the function $\tilde w\in
\mathrm{PH}^p_{N'-1,\alpha+p}(\D)$ then has
a unique expansion 
\[
\tilde w=\sum_{j=0}^{N'-2}\Mop^j[g_j],
\]
where $\Mop^j[g_j]\in\mathrm{PH}^p_{N'-1,\alpha+p}(\D)$ and $g_j$ is 
$(N-j-1)$-harmonic with $\Lop_{N'-j-2}[g_j]=0$. This means that $w=\Mop[\tilde w]$
has the expansion
\begin{equation}
w=\Mop[\tilde w]=\sum_{j=1}^{N'-1}\Mop^{j}[g_{j-1}]=\sum_{j=1}^{N'-1}\Mop^{j}[\tilde 
g_j],
\label{eq-uniQ2}
\end{equation}
where the terms are all in $\mathrm{PH}^p_{N',\alpha}(\D)$, and 
$\Lop_{N'-j-1}[\tilde g_j]=0$; here we have introduced $\tilde g_j:=g_{j-1}$. 
From the uniqueness of the cellular decomposition in Theorem 
\ref{thm-entamain2}, we realize that \eqref{eq-uniQ2} is only possible 
if $w=0$. The proof is complete.
\end{proof}

\begin{prop}
$(0<p<+\infty)$ Fix integers $N=1,2,3,\ldots$ and $j=0,\ldots,N-1$.
Suppose $\alpha$ is real with $\alpha>a_{N-j,N}(p)$. 
Then there exists a nontrivial $u\in\mathrm{PH}^p_{N,\alpha}(\D)$ of the form
$u=\Mop^j[w]$, where $w$ is $(N-j)$-harmonic with $\Lop_{N-j-1}[w]=0$. 
\label{prop-8.2}
\end{prop}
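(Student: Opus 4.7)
The plan is to construct explicit nonzero $w$ solving $\Lop_{N-j-1}[w]=0$ by appealing to Olofsson's Poisson-type representation (Proposition \ref{prop-olof1}) with parameter $\theta=N-j-1$, choosing the boundary datum $f$ so that $u:=\Mop^j[w]$ acquires the required integrability. Writing $k:=N-j$, I seek a nonzero $k$-harmonic $w$ with $\Lop_{k-1}[w]=0$ such that $u\in\mathrm{PH}^p_{N,\alpha}(\D)$. A direct inspection of \eqref{eq-aj} shows $a_{k,N}(p)=b_{k,N}(p)$ for $0<p\le1$, while $a_{k,N}(p)=-1+(k-N)p=-1-jp$ for $p\ge1$; this dichotomy motivates two different choices of $f$.

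For $0<p\le1$ I would take $f=\delta_1$, the unit Dirac mass at $\xi=1$. The Olofsson integral reduces to an evaluation of the kernel itself, yielding $w=(2\pi)^{-1}U_{k,k}$ up to a positive constant. Lemma \ref{lem-6.1} gives that $w$ is $k$-harmonic, while $\Lop_{k-1}[w]=0$ is built into the construction. Then $u=\Mop^{N-k}[w]$ is a positive multiple of $U_{k,N}$, and Lemma \ref{lem-6.2} shows $U_{k,N}\in\mathrm{PH}^p_{N,\alpha}(\D)$ precisely when $\alpha>b_{k,N}(p)=a_{k,N}(p)$, as required. Nontriviality is immediate.

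For $p>1$ the point-mass choice is insufficient, and I would instead take $f\equiv1$. By rotational symmetry the resulting
\[
w(z)=\frac{1}{2\pi}\int_\Te\frac{(1-|z|^2)^{2k-1}}{|1-z\bar\xi|^{2k}}\ds(\xi)
\]
is radial. Expanding $|1-z\bar\xi|^{-2k}$ in Taylor series as in the proof of Lemma \ref{lem-int} and integrating term by term yields
\[
w(z)=(1-|z|^2)^{2k-1}\sum_{m\ge0}\frac{[(k)_m]^2}{[m!]^2}|z|^{2m}.
\]
Gauss's standard asymptotic for the divergent hypergeometric series (here $c-a-b=1-2k<0$ for $k\ge1$) gives that the sum grows like a positive constant multiple of $(1-|z|^2)^{1-2k}$ as $|z|\to1^-$, so $w$ is continuous on $\D$ with a positive finite boundary limit (matching $1/c(k-1)$, cf.\ the remark after Proposition \ref{prop-olof1}). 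In particular $w$ is bounded and nontrivial, hence $|u(z)|\le C(1-|z|^2)^j$, and $\|u\|_{p,\alpha}^p<+\infty$ exactly when $jp+\alpha>-1$, i.e.\ $\alpha>-1-jp=a_{k,N}(p)$.

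The main technical step is the boundary asymptotic for $w$ in the second case: controlling the divergent hypergeometric sum near $|z|=1$ and extracting its nonzero limit. Beyond this, the argument is a straightforward assembly of Lemmas \ref{lem-6.1} and \ref{lem-6.2} with Proposition \ref{prop-olof1}, together with the elementary check, already made above, that $b_{k,N}(p)=a_{k,N}(p)$ on $(0,1]$ while $a_{k,N}(p)=-1-jp$ governs $(1,+\infty)$.
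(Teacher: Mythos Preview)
Your proposal is correct and follows essentially the same approach as the paper: for $0<p\le1$ you take the point-mass datum to produce $w=U_{k,k}$ and invoke Lemma~\ref{lem-6.2}, and for $p>1$ you take the constant datum $f\equiv1$ to obtain the bounded radial solution. The only notable difference is that you justify the boundedness of the radial $w$ explicitly via the hypergeometric asymptotic, whereas the paper simply asserts it; this extra detail is harmless and arguably welcome.
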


\begin{proof}
For $0<p\le 1$, we can use the function $u=U_{N-j,N}=\Mop^j[U_{N-j,N-j}]$, as given
by \eqref{eq-defUNk}, so that $w=U_{N-j,N-j}$. By Proposition \ref{prop-olof2}, 
the function $w=U_{N-j,N-j}$ solves $\Lop_{N-j-1}[w]=0$, 
by Lemma 
\ref{lem-6.2}, $u$ is in $\mathrm{PH}^p_{N,\alpha}(\D)$
if and only if $\alpha>b_{N-j,N}(p)$, and we have $a_{N-j,N}(p)=b_{N-j,N}(p)$ 
for $0<p\le 1$. For $1\le p<+\infty$, we need to consider instead the function
$u=\Mop^j[w]$, where
\begin{equation}
w(z)=(1-|z|^2)^{2N-2j-1}\int_{\Te}|1-z\bar\xi|^{-2N+2j}\frac{\diff s(\xi)}{2\pi},
\qquad z\in\D,
\label{eq-functw}
\end{equation}
which solves $\Lop_{N-j-1}[w]=0$, by a second application of Proposition 
\ref{prop-olof2}. The function $w$ given by \eqref{eq-functw} is bounded in
the disk $\D$, so that $u=\Mop^j[w]$ is in
$\mathrm{PH}^p_{N,\alpha}(\D)$ for
\[
\alpha>-1-pj=a_{N-j,N}(p),\qquad 1\le p<+\infty.
\]
The proof is complete. 
\end{proof}

\subsection{Characterization of the admissible region}
We are now ready to supply the proof of Theorem \ref{thm-main2}.

\begin{proof}[Proof of Theorem \ref{thm-main2}]
We first observe that
\[
\min_{j:1\le j\le N}a_{j,N}(p)=\min_{j:0\le j\le N}b_{j,N}(p).
\]
After that, we understand that the assertion is a consequence of the cellular 
decomposition of Theorem \ref{thm-entamain2} combined with Propositions
\ref{prop-8.1} and \ref{prop-8.2}. 
\end{proof}

\subsection{The entangled region}
It remains to supply the proof of Proposition \ref{prop-ent3.4}.

\begin{proof}[Proof of Proposition \ref{prop-ent3.4}] 
In terms of the cellular decomposition of Theorem \ref{thm-entamain2},
it is a matter of deciding for which $(p,\alpha)$ the function 
$w_{N-1}$ must equal $0$. This is easy to do using Propositions
\ref{prop-8.1} and \ref{prop-8.2}.  
\end{proof}

\subsection{The principal unentangled cell}
It remains to supply the proof of Proposition \ref{prop-snent3.5}.

\begin{proof}[Proof of Proposition \ref{prop-snent3.5}] 
In terms of the cellular decomposition of Theorem \ref{thm-entamain2},
it is a matter of deciding for which $(p,\alpha)$ the functions 
$w_j$, with $j=0,\ldots,N-2$, must all equal $0$. This is easy to do using 
Propositions \ref{prop-8.1} and \ref{prop-8.2}.
\end{proof}

\subsection{The cellular decomposition for the general 
admissible cell}
It remains to supply the proof of Theorem \ref{thm-entamain3}.

\begin{proof}[Proof of Theorem \ref{thm-entamain3}] 
The criteria of the theorem check which terms actually occur in the cellular 
decomposition of Theorem \ref{thm-entamain2}, in accordance with
Propositions \ref{prop-8.1} and \ref{prop-8.2}. 
\end{proof}

\section{The local weighted integrability problem and the classical
theorem of Holmgren}

\subsection{Uniqueness for the local weighted integrability criterion}

We now prove the local uniqueness theorem alluded to in Subsection 
\ref{subsec-local1}. We recall the definition of the ``pie'' $Q(J)$ for a
given arc $J$ of the circle $J$. Let $L^p_\alpha(\D)$ denote the weighted
Lebesgue space supplied with the (quasi-)norm
\[
\|u\|^p_{L^p_\alpha(\D)}=\int_{\D}|u(z)|^p(1-|z|^2)^\alpha\diff A(z)
<+\infty.
\]

\begin{thm}
$(0<p<+\infty)$
Let $J$ be an arc of $\Te$ of positive length $<2\pi$. Then the implication
\[
1_{Q(J)}u\in L^p_\alpha(\D)\implies u\equiv0
\]
holds for all $N$-harmonic functions on $\D$ if and only if 
$\alpha\le -(2N-1)p-1$. 
\label{thm-8.1.1}
\end{thm}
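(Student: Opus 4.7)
The plan is to treat the two directions separately: for the existence direction ($\alpha>-(2N-1)p-1$) I exhibit a rotated version of the kernel $U_{N,N}$, while for the uniqueness direction ($\alpha\le-(2N-1)p-1$) I induct on $N$, reducing ultimately to a boundary-uniqueness statement for holomorphic functions.

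For existence, pick $\zeta_0\in\Te\sm\bar J$ (possible because $J$ has length strictly less than $2\pi$) and set $U(z):=(1-|z|^2)^{2N-1}/|1-z\bar\zeta_0|^{2N}$, which by Lemma~\ref{lem-6.1} (and a rotation) is $N$-harmonic in $\C\sm\{\zeta_0\}$, hence in $\D$. Since $|1-z\bar\zeta_0|$ stays bounded away from zero on $Q(J)$, the integrability of $|U|^p(1-|z|^2)^\alpha$ on $Q(J)$ reduces to the integrability of $(1-|z|^2)^{(2N-1)p+\alpha}$ there, which holds precisely when $\alpha>-(2N-1)p-1$.

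For uniqueness, the main tool is a local, sub-pie version of Corollary~\ref{thm-2}: for any closed sub-arc $J'$ contained in the relative interior of $J$, the pointwise estimate of Corollary~\ref{cor-2}, applied to Whitney-type disks $\D(z,c(1-|z|))\subset Q(J)$ for $z\in Q(J')$, yields
\[
\int_{Q(J')}|\nabla u|^p(1-|z|^2)^{\alpha+p}\dA\le C\int_{Q(J)}|u|^p(1-|z|^2)^\alpha\dA.
\]
For the inductive step ($N\ge 2$, assuming the theorem at level $N-1$), two applications of this estimate give $\int_{Q(J')}|\Delta u|^p(1-|z|^2)^{\alpha+2p}\dA<+\infty$; since $\Delta u$ is $(N-1)$-harmonic and $\alpha+2p\le-(2(N-1)-1)p-1$, the inductive hypothesis forces $\Delta u\equiv 0$, so $u$ is in fact harmonic on $\D$. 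As $\alpha\le-p-1$ a fortiori, we are reduced to the base case $N=1$.

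The base case rests on the following lemma, which I expect to be the principal technical obstacle: \emph{if $f$ is holomorphic in $\D$ and $\int_{Q(J'')}|f|^p(1-|z|^2)^{-1}\dA<+\infty$ for some arc $J''$ of positive length, then $f\equiv 0$}. Granting it, a single application of the local gradient estimate to a harmonic $v$ with $\int_{Q(J)}|v|^p(1-|z|^2)^{-p-1}\dA<+\infty$ yields $\int_{Q(J')}|\partial_zv|^p(1-|z|^2)^{-1}\dA<+\infty$, so $\partial_zv\equiv 0$ and (symmetrically) $\bar\partial_zv\equiv 0$, whence $v$ is constant; but a nonzero constant fails integrability with weight $(1-|z|^2)^{-p-1}$ on any pie. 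To prove the lemma, I would use Fubini to force $\liminf_{r\to 1^-}|f(r\e^{\imag\theta})|=0$ for a.e.\ $\e^{\imag\theta}\in J''$, combine this with the polynomial growth bound $|f(z)|\le C(1-|z|)^{-1/p}$ coming from the local analogue of Corollary~\ref{cor-2.6}, and invoke a Luzin--Privalov-type uniqueness theorem for holomorphic functions of moderate growth. An alternative, perhaps more robust, route is to flatten a subregion of $Q(J)$ conformally to a strip or half-plane, whereupon the lemma reduces to a standard sharpness statement for weighted Bergman integrability on an unbounded region.
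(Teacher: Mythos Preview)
Your overall strategy matches the paper's closely: the existence direction via a rotated $U_{N,N}$ with singularity outside $\bar J$ is exactly what the paper does, and your uniqueness argument---localize the gradient estimate to sub-pies, iterate to push derivatives into the right weighted space, and reduce to a boundary-uniqueness statement for holomorphic functions---is the same skeleton as the paper's (which mimics Proposition~\ref{prop-5.6} locally). The induction you set up ($\Delta u$ is $(N-1)$-harmonic, apply the case $N-1$) is equivalent to the paper's successive use of $\Delta^{N-1}u,\Delta^{N-2}u,\ldots$

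The one place where your argument diverges, and where it carries a genuine gap, is the proof of the holomorphic lemma. You extract \emph{radial} information via Fubini, namely $\liminf_{r\to1^-}|f(r\e^{\imag\theta})|=0$ for a.e.\ $\e^{\imag\theta}\in J''$, and then appeal to a ``Luzin--Privalov-type'' theorem. The standard Luzin--Privalov theorem requires genuine radial (or nontangential) limits equal to zero on a set of positive measure, not merely $\liminf=0$, and your growth bound $|f(z)|=\mathrm{O}((1-|z|)^{-1/p})$ does not by itself place $f$ in a class (such as Nevanlinna) where radial limits are known to exist a.e. So the appeal is not to an off-the-shelf result, and you would have to prove the needed variant from scratch. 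The paper avoids this by extracting \emph{circular} rather than radial information: from $\int_{Q(J')}|f|^p(1-|z|)^{-1}\dA<+\infty$ one gets $\liminf_{r\to1^-}\int_{J'}|f(r\zeta)|^p\ds(\zeta)=0$, and then Jensen's inequality converts this to $\int_{J'}\log|f(r_n\zeta)|\ds(\zeta)\to-\infty$ along a subsequence. Combined with the localized pointwise bound $|f(z)|=\mathrm{O}((1-|z|)^{-1/p})$ near a slightly smaller arc $J''$, one applies the sub-mean-value inequality for the subharmonic function $\log|f|$ on the region bounded by the arc $r_nJ'$ and the two radial sides, together with an elementary harmonic-measure estimate (the arc carries nearly all the harmonic measure as $r_n\to1^-$, while on the radial sides the growth bound controls $\log|f|$), to force $\log|f(z_0)|=-\infty$ at any interior point. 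This route is self-contained and sidesteps the $\liminf$-versus-$\lim$ issue entirely; I would recommend you adopt it in place of the Luzin--Privalov invocation.
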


\begin{proof}
By considering a rotation of the function $U_{N,N}$ we realize that the 
condition is necessary for the implication to be valid.

In the other direction, we build on the fact that the argument which gives 
Corollaries \ref{thm-2} and \ref{cor-3} may be essentially localized. This 
gives us the conclusion that
\[
1_{Q(J')}\partial_z^j\bar\partial_z^k u\in L^p_{\alpha+(j+k)p}(\D)
\]
for any arc $J'$ contained in $J$ whose two endpoints are different than
those of $J$. The rest of the argument mimics the proof of 
Proposition~\ref{prop-5.6}, except that we need to know that a holomorphic 
function $f$ cannot have
\[
\int_{Q(J')}
\frac{|f(z)|^p}{1-|z|}\diff A(z)<+\infty
\]  
unless $f$ vanishes identically. This can be obtained in a straight-forward
fashion, since the integrability requirement entails that
\[
\liminf_{r\to 1^-}
\int_{J'}|f(r\zeta)|^p\diff s(\zeta)=0,
\]
and the pointwise estimates which localize Corollary \ref{cor-2.6}
yield that $|f(z)|=\mathrm{O}((1-|z|)^{-1/p})$ holds near $J''$ for any
slightly smaller arc inside $J'$.
Since $\log|f|$ is subharmonic, an elementary estimate of harmonic measure 
can now be used to deduce that $f(z)\equiv0$. 
\end{proof}


\subsection{Remarks on other domains than the disk and 
constant-coefficient elliptic operators}
\label{subsec-otherdom}

In this subsection, we will need the concept of a Schwarz function; for
details, we refer to the monographs \cite{Dav}, \cite{Shap}. 

For simplicity, we will focus on the biharmonic case $N=2$. 
Suppose that the disk $\D$ is replaced by a bounded simpy 
connected Jordan domain $\Omega$ with, say, $C^\infty$-smooth boundary. Let 
$\Gamma_\Omega(z,w)$ denote the Green function
for $\Delta^2$ on $\Omega$ with vanishing Dirichlet boundary data. 
Using Green's formula, we may represent any biharmonic function which is 
$C^2$-smooth up to the boundary in terms of integrals with respect to a
continuous density in $w$ along $\partial\Omega$ of the two Poisson-type 
kernels
\[
H_1(z,w):=\Delta_w\Gamma_\Omega(z,w),\quad H_2(z):=
\partial_{\mathrm{n}(w)}\Delta_w\Gamma_\Omega(z,w),
\]
for $w\in\partial\Omega$ and $z\in\Omega$. For the disk $\D$ it happens that
a certain non-trivial linear combination of $H_1,H_2$ is 
$\mathrm{O}(\mathrm{dist}(z,\partial\D)^3)$ except near the singularity $w$,
which is the maximal degree of flatness permitted by Holmgren's uniqueness
theorem (see the book of John \cite{John}).
Indeed, with $w=1$, this is the function $U_{2,2}(z)$ appearing in 
Section \ref{sec-6}. We do not expect such degeneracy for more general 
domains. For instance, when $\Omega$ is a non-circular ellipse it is 
impossible to find a non-trivial biharmonic function which decays like 
$\mathrm{O}(\mathrm{dist}(z,\partial\Omega)^3)$ even along an arc of the 
boundary $\partial\Omega$. 
Indeed, we have the following result (cf. \cite{Hed3}). 

\begin{thm}
Suppose $\Omega$ is a Jordan domain in the complex plane $\C$, and that
$I\subset \partial\Omega$ is a nontrivial arc (so that it contains more than 
one point), which is real-analytically
smooth. Let $S(z)$ be the local Schwarz function, that is, the function which
is holomorphic near $I$ with $S(z)=\bar z$ on $I$. Let $F$ be a biharmonic 
function $F$ in $\Omega$, which extends $C^2$-smoothly to $\Omega\cup I$, 
whose partial derivatives of order at most $2$ extend to vanish on $I$.
If $F$ does not vanish identically on $\Omega$, then the local Schwarz 
function $S(z)$ extends meromorphically to $\Omega$.
\label{thm-SF1}
\end{thm}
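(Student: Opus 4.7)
The idea is to convert the high-order vanishing of $F$ on $I$ into an algebraic identity among the Goursat components of $F$ and the Schwarz function $S$, from which $S$ emerges as a meromorphic function on $\Omega$.

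Since $\Omega$ is simply connected and $F$ is biharmonic, the classical Goursat biharmonic representation (applied to $\re F$ and $\im F$ separately and recombined) yields holomorphic functions $A,C$ on $\Omega$ and $\tilde B,\tilde D$ on $\Omega^{*}:=\{\bar z:z\in\Omega\}$ with
\[
F(z)=A(z)+\bar z\,C(z)+\tilde B(\bar z)+z\,\tilde D(\bar z).
\]
Wirtinger differentiation gives
\[
\partial_z^2 F(z)=A''(z)+\bar z\,C''(z)\quad\text{on }\Omega,
\]
and since $\partial_z^2 F\equiv 0$ on $I$ by hypothesis, substituting $\bar z=S(z)$ (which holds on $I$) produces the key identity
\[
A''(z)+S(z)\,C''(z)=0\quad\text{on }I. \qquad(\ast)
\]

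Since $F$ is biharmonic in $\Omega$ with $F|_I=\Delta F|_I=0$ (the second equation following from the vanishing of every second partial on $I$), a standard reflection principle for the biharmonic equation across a real-analytic arc extends $F$ biharmonically across $I$. Consequently the Goursat components $A$ and $C$ extend holomorphically to some two-sided neighborhood $U$ of $I$ in $\C$, and $(\ast)$ propagates from $I$---a set with accumulation points in $U$---to all of $U$ by the identity theorem. Assuming $C''\not\equiv 0$ on $\Omega$, one solves
\[
S(z)=-\frac{A''(z)}{C''(z)},
\]
whose right-hand side is manifestly meromorphic on $\Omega$ and restricts to $S$ on $U\cap\Omega$; this delivers the desired meromorphic extension.

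The remaining step, which I expect to be the main technical point, is to rule out the degeneracy $C''\equiv 0$ on $\Omega$. In that case $(\ast)$ forces $A''\equiv 0$ as well, so $A$ and $C$ are affine; writing $C'(z)\equiv c$, the remaining second-order identities on $I$ become
\[
c+\tilde D'(S(z))=0\quad\text{and}\quad\tilde B''(S(z))+z\,\tilde D''(S(z))=0.
\]
Since $S|_I$ parametrises the nontrivial arc $\bar I\subset\partial\Omega^{*}$, boundary uniqueness for holomorphic functions on $\Omega^{*}$ applied successively to $\tilde D'+c$ and then to $\tilde B''$ yields $\tilde D'\equiv -c$ and $\tilde B''\equiv 0$ on $\Omega^{*}$, so $\tilde D$ and $\tilde B$ are affine too. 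Substituting the four affine generators back into the representation of $F$, the $z\bar z$ cross-terms cancel and $F(z)=\alpha z+\beta\bar z+\gamma$; vanishing of $\partial_z F,\bar\partial_z F$ on $I$ forces $\alpha=\beta=0$, and then $F|_I=0$ gives $\gamma=0$. Hence $F\equiv 0$, contradicting the hypothesis.
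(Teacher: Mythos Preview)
Your argument is correct and follows essentially the same route as the paper: both form $\partial_z^2 F$, recognise it as bianalytic with decomposition $G_0+\bar z\,G_1$ (your $A''+\bar z\,C''$), use the biharmonic reflection across $I$ to make the components holomorphic in a full neighbourhood of $I$, and then read off $S=-G_0/G_1$ from the vanishing on $I$. The only notable difference is the degenerate case $\partial_z^2 F\equiv 0$: you push through the remaining second-order identities to force all Goursat data affine and hence $F\equiv 0$, whereas the paper simply observes that $\bar F$ is then itself a nontrivial bianalytic function vanishing on $I$ and reruns the same argument on $\bar F$---a shortcut you might prefer, since it avoids the bookkeeping with $\tilde B,\tilde D$.
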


\begin{proof}
In the indicated setting, it is well-known that the local Schwarz function 
exists as a function holomorphic in a neighborhood of $I$ with $S(z)=\bar z$
on $I$.
We form the function $H(z):=\Delta F(z)$, which is harmonic in $\Omega$ and
extends continuously to $\Omega\cup I$, with $H|_I=0$. By Schwarzian reflection,
the function $H$ extends harmonically across $I$, in such a manner that in a 
neighborhood of the arc $I$, $H(z)=-H(\bar S(z))$. We may now think of $F$ as
the solution to the Cauchy problem $\Delta F=H$ 
and $F|_I=\partial_{\mathrm{n}} F|_I=0$ [here, $\partial_{\mathrm{n}}$ denotes the 
exterior normal derivative). The Cauchy-Kovalevskaya theorem tells us that
there exists a real-analytic solution to this Cauchy problem in a neighborhood 
of $I$, and by Holmgren's theorem, the solution must be unique (see, e.g., the
book of John \cite{John}). In conclusion,
$F$ extends real-analytically across $I$.   

In a second step, we form the function $G:=\partial_z^2 F$, which is then 
bianalytic in $\Omega$, as 
\[
16\bar\partial_z^2\partial_z^2F(z)=\Delta^2 F(z)=0
\] 
in $\Omega$, while $G(z)=0$ holds for $z\in I$. By an Almansi-type expansion, 
$G$ has the form $G(z)=G_0(z)+\bar zG_1(z)$, where $G_0,G_1$ are holomorphic 
in $\Omega$ and continuous in $\Omega\cup I$. Actually, by the previous 
argument, we know more: $G_0,G_1$ both extend holomorphically across $I$. 
Since $G(z)=0$ on $I$, we conclude that $G_0(z)=-\bar zG_1(z)=-S(z)G_1(z)$ on 
$I$, so that by the uniqueness theorem for holomorphic functions, 
$G_0(z)=-S(z)G_1(z)$ holds on a neighborhood of $I$. Next, unless 
$G(z)\equiv0$, this means that the Schwarz function $S(z)$ has a meromorphic 
extension to all of $\Omega$ given by $-G_0(z)/G_1(z)$. 
This argument does not apply if $G(z)\equiv0$. In this remaining case, we 
realize that
$\bar F$ is a nontrivial bianalytic function, which vanishes on $I$. 
So the same argument applied to $\bar F$ in place of $G$  gives us a local 
Schwarz function $S(z)$ which extends meromorphically to $\Omega$. 
\end{proof}

The following corollary should be compared with the classical uniqueness 
theorem of Holmgren (cf. \cite{Hed3}).

\begin{cor}
Let $\Omega$ be the domain interior to an ellipse, which is not a circle.
Moreover, let $I$ be a nontrivial arc of $\partial\Omega$. If $F$ is a
biharmonic function in $\Omega$ which extends to a $C^2$-smooth function 
on $\Omega\cup I$, whose partial derivatives of order at most $2$ vanish
along $I$, then $F(z)\equiv0$. 
\end{cor}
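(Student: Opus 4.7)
The plan is to argue by contradiction using Theorem~\ref{thm-SF1}. Suppose $F\not\equiv0$; then the hypotheses of the theorem (biharmonicity in $\Omega$, $C^2$-extension to $\Omega\cup I$, vanishing of all partial derivatives up to order $2$ on the real-analytic arc $I$) are met, so the local Schwarz function $S(z)$ of $\partial\Omega$ near $I$ extends to a meromorphic function on all of $\Omega$.

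The next step is to show that the Schwarz function of a non-circular ellipse does \emph{not} admit a meromorphic extension to the entire interior. For the ellipse $\{x^2/a^2+y^2/b^2=1\}$ with $a\ne b$, one can obtain $S$ explicitly: writing the defining equation as $z\bar z+(z^2+\bar z^2)\frac{b^2-a^2}{2(a^2+b^2)}=\frac{2a^2b^2}{a^2+b^2}$ and solving the resulting quadratic in $\bar z$ gives
\[
S(z)=\frac{a^2+b^2}{a^2-b^2}\,z\;-\;\frac{2ab}{a^2-b^2}\sqrt{z^2-(a^2-b^2)},
\]
the branch being fixed near the initial arc $I$. This function has algebraic branch points at the two foci $\pm\sqrt{a^2-b^2}$, which lie \emph{inside} $\Omega$ whenever the ellipse is not a circle. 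A function with genuine algebraic branch points inside a simply connected domain cannot be single-valued meromorphic on that domain, so $S$ cannot extend meromorphically to $\Omega$. This contradicts the conclusion of Theorem~\ref{thm-SF1}, forcing $F\equiv0$.

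The main obstacle in this argument is the verification that the branch points at the foci are genuine, i.e.\ that the meromorphic extension produced by Theorem~\ref{thm-SF1} really clashes with the multi-valuedness of $S$. This amounts to noting that analytic continuation of $S$ from $I$ along any loop encircling a single focus returns a different function element (the sign of the square root flips), whereas a meromorphic extension in the sense of Theorem~\ref{thm-SF1} is by definition single-valued on $\Omega$. Once this monodromy observation is in place, the contradiction is immediate and the corollary follows. No further regularity or integrability machinery beyond Theorem~\ref{thm-SF1} and the explicit form of $S$ for an ellipse is needed.
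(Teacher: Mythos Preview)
Your proof is correct and follows the same approach as the paper: both argue by contraposition via Theorem~\ref{thm-SF1}, using the fact that the Schwarz function of a non-circular ellipse has branch points at the foci and hence cannot extend meromorphically to the interior. The paper simply cites this branching behavior as well-known (referring to \cite{Dav}, \cite{Shap}), whereas you supply the explicit formula for $S(z)$ and spell out the monodromy obstruction.
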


\begin{proof}
It is well-known that the Schwarz function for the ellipse develops a branch 
cut along the segment between the focal points (cf. \cite{Dav}, \cite{Shap}), 
so it cannot in particular be meromorphic in $\Omega$. So, in view of Theorem 
\ref{thm-SF1}, we must have $F(z)\equiv0$, as claimed. 
\end{proof}

\begin{rem}
If we apply a suitable affine transformation of the plane, the ellipse turns 
into $\D$, but the Laplacian changes to a related constant-coefficient 
elliptic operator of order $2$. The above example of the ellipse strongly 
suggests that the results of this paper do not carry over to the more 
general setting of elliptic constant-coefficient operators of order 
$2N$ on the disk $\D$, even when we have the $N$-th power of a given elliptic 
constant-coefficient operator of order $2$ (for $N>1$). Indeed, for $N=2$,
this follows from the results presented below in Subsection 
\ref{subsec-8.extra}, since already the local uniqueness problem finds a 
different solution. 
\end{rem}


\subsection{The weighted integrability of biharmonic functions on an
ellipse}
\label{subsec-8.extra}

It is a natural question to ask what happens with the local uniqueness
in Theorem \ref{thm-8.1.1} when the circle is replaced by an ellipse.
Let $\Omega$ be the interior to an ellipse, which is assumed not to be a 
circle. We let $L^p_\alpha(\Omega)$
consist of the functions $u$ with
\[
\int_\Omega|u(z)|^p[\mathrm{dist}(z,\partial\Omega)]^\alpha\diff A(z)<+\infty.
\]
For an arc $I$ of the ellipse $\partial\Omega$, we let $Q(I)$ denote the 
corresponding ``pie'', whose boundary consists of two linear rays from the
center of the ellipse to the endpoints of $I$, together with the arc $I$. 

\begin{thm}
Let $\Omega$ be the domain interior to an ellipse, which is not circular.
Moreover, let $I\subset\partial\Omega$ be a nontrivial boundary arc, such
that the complementary arc $\partial\Omega\setminus I$ is also nontrivial.
Then the implication
\[
1_{Q(I)}u\in L^p_\alpha(\Omega)\implies u\equiv0
\]
holds for all biharmonic functions $u$ on $\Omega$ if and only if 
$\alpha\le-2p-1$. 
\label{thm-8.2.2}
\end{thm}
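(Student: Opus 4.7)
\emph{Strategy and the easy direction.} My plan is to mirror the proof of Theorem \ref{thm-8.1.1}, but to replace the stronger boundary decay available on the disk (which supported the iteration all the way down to a holomorphic function) with the ellipse-specific rigidity of the Corollary following Theorem \ref{thm-SF1}. For the direction $\alpha>-2p-1\Rightarrow$ non-uniqueness, I would solve the biharmonic Dirichlet problem on $\Omega$ with $C^\infty$ boundary data $u|_{\partial\Omega}=\phi_0$ and $\partial_{\mathrm{n}}u|_{\partial\Omega}=\phi_1$ chosen so that $\phi_0,\phi_1$ vanish on $I$ but are not both zero on $\partial\Omega\setminus I$. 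Standard elliptic regularity for $\Delta^2$ on smooth domains produces $u\in C^\infty(\overline{\Omega})$ with $u\not\equiv0$, and the vanishing of $u$ and its normal derivative on $I$ combined with Taylor expansion yields $|u(z)|\le C\,\mathrm{dist}(z,\partial\Omega)^2$ uniformly on $Q(I)$. Hence $\int_{Q(I)}|u|^p \,\mathrm{dist}(z,\partial\Omega)^\alpha\,dA<+\infty$ precisely for $\alpha>-2p-1$.

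\emph{The uniqueness direction.} Assume $\alpha\le-2p-1$ and let $u$ be biharmonic on $\Omega$ with $1_{Q(I)}u\in L^p_\alpha(\Omega)$. Write $d(z):=\mathrm{dist}(z,\partial\Omega)$ and fix a subarc $I''\subset I$ whose endpoints lie interior to $I$. Exactly as in the proof of Theorem \ref{thm-8.1.1}, localized versions of Corollaries \ref{cor-2.6} and \ref{cor-3} give
$$
1_{Q(I'')}\,\partial_z^j\bar\partial_z^k u\in L^p_{\alpha+(j+k)p}(\Omega),\qquad j,k\ge0,
$$
and for $j+k\le 2$ the resulting exponent is $\le-1$. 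Taking $j=k=1$, the harmonic function $\Delta u=4\partial_z\bar\partial_z u$ lies in $L^p_{\alpha+2p}(Q(I''))$; adapting the polar-coordinate argument of Proposition \ref{prop-5.5} to local boundary coordinates on $Q(I'')$, I would conclude that $\Delta u$ extends continuously to $\Omega\cup I'''$ (for a slightly smaller subarc $I'''\subset I''$) and vanishes on $I'''$. Schwarzian reflection across the real-analytic arc $I'''$ then extends $\Delta u$ harmonically into a full neighborhood of $I'''$. Running the same polar argument on $u,\partial_z u,\bar\partial_z u$ (whose weights also have exponent $\le-1$) produces continuous extensions to $\Omega\cup I'''$ with $u=\partial_{\mathrm{n}}u=0$ on $I'''$, so that $u$ satisfies $\Delta u=H$ near $I'''$ with $H$ real-analytic and with trivial first-order Cauchy data on $I'''$. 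As in Step 1 of the proof of Theorem \ref{thm-SF1}, Cauchy-Kovalevskaya combined with Holmgren's theorem then produces a real-analytic extension of $u$ across $I'''$. Once $u$ is real-analytic in a neighborhood of $I'''$, continuity together with the $j+k=2$ integrability forces all partial derivatives of $u$ of order at most $2$ to vanish on $I'''$, and the Corollary following Theorem \ref{thm-SF1}, applied with $I'''$ in place of $I$, delivers $u\equiv0$.

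\emph{Main obstacle.} The delicate step is the passage from weighted $L^p$-integrability to pointwise vanishing on a subarc of $\partial\Omega$. For the harmonic piece $\Delta u$ this adapts the argument of Proposition \ref{prop-5.5} essentially verbatim; for $u$ itself and its first derivatives, which are biharmonic but not harmonic, I will substitute Hardy-Littlewood ellipticity (Corollary \ref{cor-2.6}) for the subharmonicity of $|h|^p$ used in the harmonic case, decomposing into holomorphic and antiholomorphic building blocks in a tubular neighborhood of $I'''$ where necessary. What ultimately converts ``vanishing to order $2$ on $I'''$\,'' into $u\equiv 0$ is precisely the ellipse-specific rigidity of the Corollary after Theorem \ref{thm-SF1}, and this is what explains the threshold $-2p-1$ here in place of the disk threshold $-3p-1=-(2N-1)p-1$ with $N=2$ coming from Theorem \ref{thm-8.1.1}.
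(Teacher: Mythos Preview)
Your ``only if'' direction (the construction for $\alpha>-2p-1$) matches the paper's.

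The ``if'' direction has a genuine gap in the step you yourself flag as the main obstacle. You claim that from $1_{Q(I'')}\Delta u\in L^p_{\alpha+2p}$ with $\alpha+2p\le-1$, the harmonic function $\Delta u$ extends continuously to $\Omega\cup I'''$ and vanishes there. The argument of Proposition~\ref{prop-5.5} does not give this: in its global form for $N=1$ it yields $v_0\equiv0$, not continuous boundary extension, and in the local setting the conclusion you want is simply false for general harmonic functions when $0<p<1$. Indeed, the Poisson kernel $P(\cdot,\zeta_0)$ with $\zeta_0$ interior to $I''$ lies in $L^p_{-1}$ near $I''$ (by Lemma~\ref{lem-int}, since $-1>\max\{p-2,-1-p\}$ for $0<p<1$) yet has no continuous extension across $\zeta_0$; taking a convergent sum of such kernels with singularities dense in $I''$ shows that no subarc $I'''$ can be salvaged. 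The same obstruction blocks your claims about $u$ and $\nabla u$: the Hardy--Littlewood pointwise bound of Corollary~\ref{cor-2.6} only gives $|u(z)|\le C\,d(z)^{2-1/p}$, which for $p\le\tfrac12$ does not even force $u\to0$ at the boundary, let alone a $C^2$-smooth extension with vanishing second-order data. So you never reach the hypotheses of the Corollary following Theorem~\ref{thm-SF1}.

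The paper's route avoids pointwise regularity entirely. It passes to $G=\partial_z^2 u$, which is \emph{bianalytic}, writes $G=G_1+\bar z\,G_2$ with $G_1,G_2$ holomorphic on $\Omega$, and uses the Schwarz function $S$ of the ellipse (holomorphic near $I'$ with $\bar z-S(z)=\mathrm{O}(d(z))$) to form the \emph{holomorphic} combination $G_1+SG_2$. This function inherits local membership in $L^p_{\alpha+2p}\subset L^p_{-1}$ near the arc, and for holomorphic functions the harmonic-measure argument sketched at the end of the proof of Theorem~\ref{thm-8.1.1} forces it to vanish identically --- this step works for all $p>0$ precisely because $\log|f|$ is subharmonic, which is what fails for merely harmonic functions. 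From $G_1+SG_2\equiv0$ one gets $\partial_z^2 u=(\bar z-S)G_2$; the branch cut of $S$ at the foci then forces $G_2\equiv0$, hence $\partial_z^2 u\equiv0$, and the argument repeats with $\bar u$ in place of $G$. In short, the paper builds the Schwarz-function rigidity directly into the $L^p$ argument rather than first upgrading to $C^2$ boundary regularity and then invoking the Corollary.
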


\begin{proof}
By solving the Dirichlet problem for the bilaplacian with nontrivial smooth
boundary data (but with data that vanish on $I$), we may obtain a nontrivial 
biharmonic function $u$ on $\Omega$ which decays like 
$u(z)=\mathrm{O}(\mathrm{dist}(z,\partial\Omega)^2)$ near the arc $I$.
This function $u$ will have $1_{Q(I)}u\in L^p_\alpha(\Omega)$ for  
every $\alpha>-2p-1$. This settles the ``only if'' part. 

We now turn to the ``if'' part. So we assume $\alpha\le-1-2p$ and try to
show that the asserted implication holds. 
As in the proof of Theorem \ref{thm-8.1.1}, we observe that we may obtain
from the technique in Corollaries \ref{thm-2} and \ref{cor-3} that 
\[
1_{Q(I')}\partial_z^j\bar\partial_z^k u\in L^p_{\alpha+(j+k)p}(\Omega),
\]
for any arc $I'$ contained in $I$ whose endpoints are different.
We form the function $G(z):=\partial_z^2u(z)$, which is bianalytic in $\Omega$
with $1_{Q(I')}G\in L^p_{\alpha+2p}(\Omega)$. The function $G$ is bianalytic,
so that $G(z)=G_1(z)+\bar zG_2(z)$, where $G_1,G_2$ are holomorphic. 
We know also that $G_2(z)=\bar\partial_z\partial_z^2 u(z)$ has $1_{Q(I')}G_2
\in L^p_{\alpha+3p}(\Omega)$. We let $S(z)$ be the Schwarz function for the
boundary ellipse $\partial\Omega$, which has the property that $\bar z-S(z)=
\mathrm{O}(\mathrm{dist}(z,\partial\Omega))$. We conclude that if $I'$ is made
smaller so that the ``pie'' $Q(I')$ avoids the foci of the
ellipse, the function $1_{Q(I')}(z)(S(z)-\bar z)G_2(z)$ is in 
$L^p_{\alpha+2p}(\Omega)$. Next, since $G(z)=G_1(z)+\bar z G_2(z)$ is in 
$L^p_{\alpha+2p}(\Omega)$, we conclude that 
$1_{Q(I')}(z)\{G_1(z)+S(z)G_2(z)\}$ is in 
$L^p_{\alpha+2p}(\Omega)\subset L^p_{-1}(\Omega)$. The implied integral decay 
of the holomorphic function $G_1+SG_2$ is too strong near the arc $I'$, 
which leads to the conclusion that $G_1(z)+S(z)G_2(z)\equiv0$ (an argument 
can be based on harmonic measure estimates; compare
with the proof of Theorem \ref{thm-8.1.1}). We are left with 
$\partial_z^2u(z)=G_1(z)+\bar z G_2(z)=(\bar z-S(z))G_2(z)$ which is possible
only if $G_2(z)\equiv0$ as a result of the branch cuts for $S(z)$ at the foci
of the ellipse. Finally, if $G_2(z)\equiv0$ then $G(z)\equiv0$ and $\bar u$ is
bianalytic. Arguing with $\bar u$ in place of $G$ settles the ``if'' part.
The proof of the theorem is complete. 
\end{proof}

\begin{rem}
We see from a comparison of Theorems \ref{thm-8.1.1} and \ref{thm-8.2.2} 
that circles and non-circular ellipses behave quite differently. 
\end{rem}

\begin{opprob}
It remains an open problem whether the critical integrability type -- defined
in an analogous fashion -- for biharmonic functions ($N=2$) on a bounded 
simply connected quadrature domain $\Omega$ is the same as for the disk or not
(compare with Theorem \ref{thm-main1}). Even the analogue of the local theorem
(Theorem \ref{thm-8.1.1}) appears unknown in the context of quadrature domans. 
\end{opprob}

\section{Concluding remarks}

\subsection{Boundary effects}

It would be interesting to find out necessary and sufficient 
conditions on a distribution $f$ on $\Te$ in order to have that the
potential
\[
\mathbf{U}_{N,N}[f](z)=(1-|z|^2)^{2N-1}\int_\Te|1-z\bar\xi|^{-2N}
f(\xi)\,\frac{\diff s(\xi)}{2\pi},
\]
belongs to, 
say, $\mathrm{PH}^p_{N,\alpha}(\D)$ for $0<p<1/(2N)$
and $\alpha>b_{N,N}(p)$ close to $b_{N,N}(p)$. It is obvious that a sum of 
point masses at points of $\Te$ with coefficients from $\ell^p$ gives rise 
to such a distribution $f$. However, it is easy to see that 
the functions $U_{N,N}(\gamma z)$ depend continuously on 
$\gamma\in\Te$ in the space $\mathrm{PH}^p_{N,\alpha}(\D)$, so the correct 
answer is not the trivial one (sums of point masses at points of $\Te$ with 
coefficients from $\ell^p$). 


\subsection{Sharpening of some results}
It is possible to sharpen the assertion of Theorem \ref{thm-main2} 
to arrive at the following results (cf. Section 4 of \cite{Ale}).

\noindent{\rm(i)} 
Suppose $0<p<+\infty$, and that $u$ is $N$-harmonic in $\D$. If 
\[
\liminf_{r\to1^-}\,\,\,(1-r)^{\beta(N,p)}\int_{\D\setminus\D(0,r)}
|u|^p\,\diff A=0,
\]
then $u(z)\equiv0$.

\noindent{\rm(ii)} 
If $0<p<\frac1{2N-1}$, and if 
\[
\liminf_{r\to 1^-}\frac{\int_{\D\setminus\D(0,r)}|u|^p\diff A}
{\int_{\D\setminus\D(0,r)}|U_{N,N}|^p\,\diff A}=0,
\]
then $u(z)\equiv0$.
\medskip

Here, it is useful to know that
\[
\int_{\D\setminus\D(0,r)}|U_{N,N}(z)|^p\diff A(z)\asymp 
\begin{cases}
(1-r)^{2-p},\qquad \text{if}\,\,\,p>\frac1{2N},\\
(1-r)^{2-p}\log\frac1{1-r},\qquad  \text{if}\,\,\,p=\frac1{2N},\\
(1-r)^{1+(2n-1)p},\qquad \text{if}\,\,\,0<p<\frac1{2N}.
\end{cases}
\]

\subsection{
Remarks on the weighted integrability of polyanalytic functions
}

In the polyanalytic case the corresponding critical exponent 
is $\beta=-1-(N-1)p$, which may be interpreted as saying that no 
entanglement takes place. 
To be more precise, let $\mathrm{PA}^p_{N,\alpha}(\D)$ 
denote the subspace of $L^p_\alpha(\D)$ consisting of $N$-analytic functions,
which by definition solve the partial differential equation 
$\bar\partial_z^Nf=0$ on $\D$. 
Then 
\[
\mathrm{PA}^p_{N,\alpha}(\D)=\{0\} 
\,\,\,\iff \,\,\,\alpha\le -1-(N-1)p.
\]
We explain the necessary argument for $N=2$. 
Then $f\in\mathrm{PA}^p_{2,\alpha}(\D)$ 
decomposes into $f(z)=f_1(z)+\bar z f_2(z)$ with holomorphic $f_1,f_2$, 
so that
\[
zf(z)=zf_1(z)+|z|^2f_2(z)
\] 
is biharmonic. We form the extension of $zf$,
$\mathbf{E}[zf](z,\varrho)=zf_1(z)+\varrho^2f_2(z)$,
so that $\mathbf{E}[zf](z,\varrho)=zf(z)$ for $z\in\Te(0,\varrho)$.
Now, if 
\[
\int_{\D}
|zf(z)|^p(1-|z|^2)^{-1-p}\diff A(z)<+\infty,
\]
an elementary argument shows that
\[
\liminf_{\varrho\to1^-}\,\,\,(1-\varrho^2)^{-p}\int_{\Te(0,\varrho)}
|\mathbf{E}[zf](\zeta,\varrho)|^p\diff s(\zeta)
=\liminf_{\varrho\to1^-}\,\,\,(1-\varrho^2)^{-p}\int_{\Te(0,\varrho)}
|\zeta f(\zeta)|^p\diff s(\zeta)
=0.
\]
As the function $|\mathbf{E}[zf](\cdot,\varrho)|^p$ is subharmonic, 
we have that
\[
|\mathbf{E}[zf](z,\varrho)|^p
\le\frac{1}{2\pi\varrho}\frac{\varrho+|z|}{\varrho-|z|}
\int_{\Te(0,\varrho)}|F_\varrho(\zeta)|^p\diff s(\zeta),
\qquad z\in\D(0,\varrho),
\]
and a combination of the above tells us that $\mathbf{E}[zf](z,1)
=zf_1(z)+f_2(z)\equiv0$.
We arrive at $zf(z)=z(1-|z|^2)f_1(z)$, and the problem reduces
to $N=1$, $\alpha\le-1$, which is trivial. 
\bigskip


\begin{thebibliography}{9}

\bibitem{AH}
Abkar, A., Hedenmalm, H., \emph{A Riesz representation formula for 
super-biharmonic functions}. Ann. Acad. Sci. Fenn. Math. 
\textbf{26} (2001), 305-324.

\bibitem{Ale}
Aleksandrov, A. B., \emph{On boundary decay in the mean of harmonic 
functions}. 
St. Petersburg Math. J. \textbf{7} (1996), 507-542.

\bibitem{ARS}
Aleman, A., Richter, S., Sundberg, C., \emph{Beurling's theorem for 
the Bergman space}. Acta Math. \textbf{177} (1996), no. 2, 
275-310.

\bibitem{ACL} Aronszajn, N., Creese, T. M., Lipkin, L. J., 
\emph{Polyharmonic functions. Notes taken by Eberhard Gerlach}. 
Oxford Mathematical Monographs. Oxford Science Publications. 
The Clarendon Press, Oxford University Press, New York, 1983. 


\bibitem{BorHed}
Borichev, A., Hedenmalm, H., \emph{Harmonic functions of maximal growth: 
invertibility and cyclicity in Bergman spaces}. J. Amer. Math. Soc. {\bf10} 
(1997), no. 4, 761-796. 

\bibitem{Dahlb} Dahlberg, B. E. J., {\em On the radial boundary values 
of subharmonic functions}. Math. Scand. {\bf40} (1977), no. 2, 301-317.

\bibitem{Dav} Davis, P. J., \emph{The Schwarz functions and its applications}.
The Carus Mathematical Monographs, No. {\bf17}. The Mathematical Association 
of America, Buffalo, N. Y., 1974.

\bibitem{DKSS}
Duren, P., Khavinson, D., Shapiro, H. S., Sundberg, C., 
\emph{Contractive zero-divisors in Bergman spaces}. 
Pacific J. Math. \textbf{157} (1993), no. 1, 37-56. 

\bibitem{FeS} Fefferman, C., Stein, E. M., \emph{$H^{p}$ spaces of several 
variables}. Acta Math. {\bf 129} (1972), no. 3-4, 137-193.

\bibitem{Gak}
Gakhov, F. D., \emph{Boundary value problems}. Pergamon Press, 
Oxford-New York-Paris; Addison-Wesley Publishing Co., Inc., Reading, 
Mass.-London, 1966.

\bibitem{Gara1}
Garabedian, P. R., \emph{A partial differential equation arising in 
conformal mapping}. Pacific J. Math. {\bf1} (1951), 485-524. 

\bibitem{Gara}
Garabedian, P. R., \emph{Partial differential equations}.
Second edition. Chelsea Publishing Co., New York, 1986.

\bibitem{Gar}
Garnett, J. B., \emph{Bounded analytic functions}. Pure and Applied 
Mathematics, {\bf 96}. Academic Press, Inc., New York-London, 1981.

\bibitem{GGS}
Gazzola, F., Grunau, H.-C., Sweers, G., {\em Polyharmonic boundary value 
problems. Positivity preserving and nonlinear higher order elliptic equations 
in bounded domains}. Lecture Notes in Mathematics, 1991. Springer-Verlag, 
Berlin, 2010. 

\bibitem{GR} Grunau, H.-C., Robert, F., {\em Positivity and almost positivity 
of biharmonic Green's functions under Dirichlet boundary conditions}. 
Arch. Ration. Mech. Anal. {\bf195} (2010), no. 3, 865-898. 

\bibitem{HL} Hardy, G. H., Littlewood, J. E., \emph{Some properties of 
conjugate functions}. J. Reine Angew. Math. \textbf{167} (1931) 
405-423.

\bibitem{HayKor} Hayman, W. K., Korenblum, B., \emph{Representation and 
uniqueness theorems for polyharmonic functions}. J. Anal. Math. {\bf60} 
(1993), 113-133. 

\bibitem{Hed0} Hedenmalm, H., \emph{A factorization theorem for square 
area-integrable analytic functions}. J. Reine Angew. Math. {\bf422} (1991), 
45-68. 

\bibitem{Hed0.5} Hedenmalm, H., \emph{A factoring theorem for the Bergman 
space}. Bull. London Math. Soc. {\bf26} (1994), no. 2, 113-126.

\bibitem{Hed1} Hedenmalm, H., \emph{A computation of Green functions for 
the weighted biharmonic operators $\Delta|z|^{-2\alpha}\Delta$, with 
$\alpha>1$}. Duke Math. J. {\bf75} (1994), no. 1, 51-78. 

\bibitem{Hed2} Hedenmalm, H., \emph{The Beurling operator for the hyperbolic
plane}. Ann. Acad. Sci. Fenn. Math. {\bf37} (2012), 3-18.

\bibitem{Hed3} Hedenmalm, H., \emph{On the uniqueness theorem of Holmgren}.
arXiv:1312.4348

\bibitem{HJS} Hedenmalm, H., Jakobsson, S., Shimorin, S., \emph{A biharmonic 
maximum principle for hyperbolic surfaces}.
J. Reine Angew. Math. {\bf550} (2002), 25-75. 

\bibitem{HKZ} Hedenmalm, H., Korenblum, B., Zhu, K., \emph{Theory of Bergman 
spaces}. Graduate Texts in Mathematics, {\bf199}. 
Springer-Verlag, New York, 2000. 

\bibitem{HedOlo} Hedenmalm, H., Olofsson, A.,
\emph{Hele-Shaw flow on weakly hyperbolic surfaces}. Indiana Univ. Math. J. 
{\bf54} (2005), no. 4, 1161-1180.

\bibitem{HedPer} Hedenmalm, H., Perdomo-G., Y., \emph{Mean value surfaces 
with prescribed curvature form}. 
J. Math. Pures Appl. (9) {\bf83} (2004), no. 9, 1075-1107.

\bibitem{HedShi02}
Hedenmalm, H., Shimorin, S., \emph{Hele-Shaw flow on hyperbolic surfaces}. 
J. Math. Pures Appl. (9) {\bf81} (2002), no. 3, 187-222.

\bibitem{HedShi05} Hedenmalm, H., Shimorin, S., 
\emph{Weighted Bergman spaces and the integral means spectrum of conformal 
mappings}. Duke Math. J. {\bf127} (2005), no. 2, 341-393. 

\bibitem{Hub} Huber, A., \emph{On the uniqueness of generalized axially 
symmetric potentials}. Ann. of Math. (2) \textbf{60}, (1954), 351-358.

\bibitem{John} John, F., \emph{Partial differential equations. 
Fourth edition}. Applied Mathematical Sciences, {\bf1}. Springer-Verlag, 
New York, 1982.

\bibitem{Mak} Makarov, N. G., \emph{Fine structure of harmonic measure}. 
St. Petersburg Math. J. {\bf 10} (1999), 217-268.

\bibitem{MM} Mayboroda, S., Maz'ya, V., \emph{Boundedness of the gradient 
of a solution and Wiener test of order one for the biharmonic equation}. 
Invent. Math. {\bf175} (2009), no. 2, 287-334. 

\bibitem{Olof1} Olofsson, A., 
\emph{A computation of Poisson kernels for some 
standard weighted biharmonic operators in the unit disc}. Complex Var. 
Elliptic Equ. {\bf53} (2008), no. 6, 545-564. 

\bibitem{Olof2} Olofsson, A., 
\emph{Differential operators for a scale of 
Poisson type kernels in the unit disc}. J. Anal. Math., to appear.

\bibitem{Pa2} Pavlovi\'c, M., \emph{On subharmonic behaviour and oscillation 
of functions on balls in $\mathbb R^n$}. Publ. Inst. Math. (Beograd) (N.S.) 
{\bf 55 (69)} (1994), 18-22.

\bibitem{Pa1} Pavlovi\'c, M., \emph{Decompositions of $L^p$ and Hardy spaces 
of polyharmonic functions}. J. Math. Anal. Appl. {\bf 216} (1997), no. 2, 
499-509.

\bibitem{Shap}
Shapiro, H. S., \emph{The Schwarz function and its generalization to higher 
dimensions}.
University of Arkansas Lecture Notes in the Mathematical Sciences, 
\textbf{9}. A Wiley-Interscience Publication. John Wiley \& Sons, Inc., 
New York, 1992. 

\bibitem{Suz} Suzuki, N., 
\emph{Nonintegrability of harmonic functions in a domain}. 
Japan. J. Math. \textbf{16} (1990), 269-278.

\bibitem{Wein} Weinstein, A., \emph{Generalized axially symmetric potential 
theory}. Bull. Amer. Math. Soc. \textbf{59} (1953), 20-38.

\bibitem{Wein2} Weinstein, A., \emph{On a class of partial differential 
equations of even order}. Ann. Mat. Pura Appl. (4) \textbf{39} (1955), 245-254.

\end{thebibliography}
\end{document}